\documentclass[11pt,reqno]{amsart}
\usepackage[dvipsnames]{xcolor}
\usepackage[linktocpage]{hyperref}
\hypersetup{	
  colorlinks=true,
  linkcolor=NavyBlue,
  anchorcolor=Blue,
  citecolor=Green,
  urlcolor=NavyBlue
}

\usepackage{yhmath}

\usepackage[margin=3cm]{geometry}

\usepackage[dvipsnames]{xcolor}
\usepackage{lmodern}  

\usepackage[T1]{fontenc}
\usepackage{textcomp}
\usepackage{amsmath, amssymb,mathrsfs}
\usepackage{mathtools}

\usepackage{tikz}
\usetikzlibrary{hobby,arrows}
\usetikzlibrary{shapes.geometric}
\usetikzlibrary{arrows.meta}
\usetikzlibrary{calc}
\usepackage{orcidlink}

\makeatletter
\newcommand*{\rom}[1]{\expandafter\@slowromancap\romannumeral #1@}
\makeatother

\newcommand\N{\ensuremath{\mathbb{N}}}
\newcommand\K{\ensuremath{\mathbb{K}}}
\newcommand\R{\ensuremath{\mathbb{R}}}
\newcommand\Z{\ensuremath{\mathbb{Z}}}

\newcommand\C{\ensuremath{\mathbb{C}}}

\newcommand{\supp}{\operatorname{supp}}

\usepackage{bbm}

\newtheorem{theorem}{Theorem}[section]
\newtheorem{proposition}[theorem]{Proposition}
\newtheorem{corollary}[theorem]{Corollary}
\newtheorem{lemma}[theorem]{Lemma}
\newtheorem{conjecture}[theorem]{Conjecture}
\theoremstyle{definition}
\newtheorem{remark}[theorem]{Remark}
\newtheorem*{question}{Question}

\newtheorem{definition}[theorem]{Definition}

\theoremstyle{plain}



\DeclarePairedDelimiterX\ipd[2]{\langle}{\rangle}{#1\delimsize , #2}

\numberwithin{equation}{section}

\mathtoolsset{showonlyrefs}

\title[Sparse supports of lattice eigenfunctions]{Sparse supports of lattice eigenfunctions:\\
quantitative growth and algebraic rigidity}

\author[Y.~Wang]{Yunlei Wang}

\email{yunlei.wang@lsu.edu}

\address{Department of Mathematics, Louisiana State University, Baton Rouge, LA 70803, USA}

\keywords{discrete harmonic function; lattice eigenfunction; Hilbert function; initial ideal; standard monomial; Zariski dimension}
\subjclass{Primary 42B37; Secondary 39A14, 39A22, 14N05}

\begin{document}

\begin{abstract} 
We study how sparsely a nonzero discrete harmonic function on the standard
lattice $\mathbb{Z}^d$ can be supported. Let $Q_n^{(d)}=\{-n,\cdots,n\}^d$, and let
$m_d(n)$ denote the least possible value of $|\mathrm{supp}(u)\cap Q_n^{(d)}|$ among discrete harmonic functions
$u:\mathbb{Z}^d\to\mathbb{C}$ with $u(0)\neq0$. For all $n\geq1$, we prove
\begin{equation*}
    m_3(n)\asymp n^2,
    \quad
    c_dn^{d^2/(2d-1)}
    \leq m_d(n)\leq
    (2n+1)^{\lfloor d/2\rfloor+1}
    \quad d\ge 4.
\end{equation*}
These estimates extend the two-dimensional support estimate of
Buhovsky, Logunov, Malinnikova, and Sodin
[Duke Math. J. 171 (2022), 1349--1378] to higher dimensions and obtain sharpness in dimension three. For $d\geq4$,
the lower exponent and the upper one differ by less than
$3/4$ in even dimensions and $1/4$ in odd dimensions. The proof combines Hilbert functions of finite support sets with a
position-translation uncertainty principle. The sharp three-dimensional
bound additionally uses Cayley--Bacharach relations and rigidity of
algebraic curves. 

Finally, for every nonzero lattice eigenfunction with
eigenvalue $\lambda$, the Zariski closure of its full support has dimension
at least $\lceil d/2\rceil$, and at least $\lfloor d/2\rfloor+1$ when
$\lambda\neq0$. Both bounds are optimal.

All proofs resulted from human-guided exploration by GPT-5.6 Sol in Ultra mode and checked by the author.
\end{abstract}

\maketitle

{
\setcounter{tocdepth}{1}
\tableofcontents
}

\section{Introduction}
In this article, we study how sparse the support of an eigenfunction on the discrete lattice $\Z^d$ can be. This problem reveals a relationship between the supports of eigenfunctions and the Hilbert functions of their supports.

\subsection{The support-growth problem}

Let $e_1,\cdots,e_d$ be the standard basis of $\Z^d$. We define
\begin{equation*}
    (A_du)(x):=\sum_{i=1}^d\left(u(x+e_i)+u(x-e_i)\right) \quad\text{and}\quad P:=A_d-2dI. 
\end{equation*}
We call $u$ is a \emph{discrete harmonic function} on $\Z^d$ if $Pu=0$ holds for any $x\in \Z^d$. 

Define
\begin{equation*}
    Q_n^{(d)}:=\lbrace -n,-n+1,\cdots,n\rbrace^d,\quad Q_n^{(d)}(x):=x+Q_n^{(d)}.
\end{equation*}

In this article, we concern the following support-growth question:
\begin{question}
    For every integer $d\ge 2$, what is the largest exponent $\beta_d$ for which there exists a positive constant $c_d$ such that for any harmonic function $u:\Z^d\to \C$ satisfying $u(0)\neq 0$ such that
    \begin{equation*}
        |\supp(u)\cap Q_n^{(d)}|\ge c_d n^{\beta_d} \quad \text{ for every integer } n\ge 1.
    \end{equation*}
\end{question}
To the best of our knowledge, the problem is solved only for $d=2$  Buhovsky--Logunov--Malinnikova--Sodin  as a direct result from a more general theorem in \cite{BLMS}, see Subsection~\ref{subsec:history} in detail. The present paper settles $d=3$ and the optimal exponent remains open for $d\ge 4$.

\subsection{Main results}
For simplicity, we define 
\begin{equation*}
    m_d(n):=\inf\left\{ |\supp(u)\cap Q_n^{(d)} |: u:\Z^d\to \C, \,\,A_du=2du, u(0)\neq 0\right\},\quad d\ge 1.
\end{equation*}
Define 
\begin{equation*}
    K_d=\left\lfloor \frac{d}{2}\right\rfloor+1,\quad d\ge 1.  
\end{equation*}
Set
\begin{equation*}
    \delta_d= \begin{cases}
  1-\dfrac{d}{2(2d-1)},&d\text{ even},\\
  \dfrac{d-1}{2(2d-1)},&d\text{ odd},
  \end{cases} \, \text{ for } d\ge 4 \quad \text{and}\quad  \delta_d=0 \text{ for } d=2,3.
\end{equation*}
Our first result gives a uniform quadratic lower bound for the support growth of harmonic function. 
\begin{theorem}\label{thm-quantitative}
    For every integer $d\ge 3$, there exists a constant $c_d>0$ depending only on $d$, such that
    \begin{equation*}
        c_d n^{K_d-\delta_d}\le m_d(n)\le(2n+1)^{K_d},\quad \forall n\in\N.
    \end{equation*}
\end{theorem}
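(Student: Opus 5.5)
Both bounds in Theorem~\ref{thm-quantitative} need proof: an explicit construction for the upper bound, and a Hilbert-function argument combined with an uncertainty principle for the lower bound.

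\textbf{Upper bound.} We need a harmonic $u$ with $u(0)\neq0$ supported on a $K_d$-dimensional linear sublattice. Pair the coordinates: write $d=2k$ or $d=2k+1$.

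For a pair $(x_1,x_2)$, the function $v(x_1,x_2)=\mathbbm{1}[x_1+x_2=0]\,w(x_1)$ has $(A_2 v)(x_1,x_2)=0$ off the lines $x_1+x_2=\pm1$. It is cleaner to use the classical two-dimensional example of \cite{BLMS}: $u$ supported on the diagonals, with $u(x_1,x_2)=f(x_1)$ when $x_1=-x_2$. In that case $A_2$ acting on $f$ produces values only on $x_1+x_2=\pm1$, and these vanish because $u(x+e_1)+u(x-e_2)$ cancels when we take $u$ odd-shifted.

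Concretely, the plan is to use products of plane waves restricted to anti-diagonals. For each pair $j$, set $u_j(x_{2j-1},x_{2j})=\mathbbm{1}[x_{2j-1}+x_{2j}=0]\,z_j^{x_{2j-1}}$. Then $A_2u_j$ on the line $x_{2j-1}+x_{2j}=1$ equals $z_j^{x_{2j-1}}(z_j^{-1}+1)$ at the appropriate point. This forces $z_j=-1$ and makes $A_2u_j=0$ identically, i.e.\ $u_j(x)=(-1)^{x_{2j-1}}\mathbbm{1}[x_{2j-1}+x_{2j}=0]$ is an eigenfunction of $A_2$ with eigenvalue $0$.

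Now take $u=\bigotimes_j u_j\otimes g$:
\begin{itemize}
\item For even $d$, take $g$ trivial. Then $A_d u=0\neq 2du$, so instead use $u_j$ with general eigenvalues $\mu_j$, supported on a strip of bounded width, still giving size $O(n^{k+1})$.
\item For odd $d$, take $g(x_d)=\cosh(\theta x_d)$ with $2\cosh\theta=2d$. Then $u$ is supported on a $(k+1)$-dimensional set, with count $(2n+1)^{k+1}$.
\end{itemize}
In the even case the extra dimension is absorbed by letting one pair carry a free one-dimensional eigenfunction of $A_2$ with eigenvalue $2d$ supported on $\{x_{2k-1}+x_{2k}\in\{0,1\}\}$ or on a full coordinate line. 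I would verify this case by hand and choose the layout so that the support is contained in $K_d$ free coordinates and the count is at most $(2n+1)^{K_d}$.

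\textbf{Lower bound.} Here I would proceed in three steps.
\begin{enumerate}
\item \emph{Algebraic input.} Let $S=\supp u\cap Q_n$. If $|S|$ is smaller than the number of monomials of degree $\le D$ in $d$ variables restricted suitably, then there exist many polynomials $p$ of degree $\le D$ vanishing on $S$. Equivalently, the Hilbert function $h_S(D)<\binom{D+d}{d}$ gives an ideal of vanishing polynomials.
\item \emph{Uncertainty principle.} Harmonicity means the Fourier transform of $u$ is concentrated on the variety $\{\sum_i(z_i+z_i^{-1})=2d\}$. Pairing $u$ against $p(x)\,e^{i\langle \xi,x\rangle}$ on a cube of size $n$, the position-translation uncertainty principle says that $u$ cannot be annihilated by all low-degree polynomial multiplications and all short translations simultaneously. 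Combined with $u(0)\neq0$, this forces $h_S(D)\gtrsim D^{K_d}$ for $D$ up to about $n^{\gamma}$.
\item \emph{Optimization.} Since $|S|\ge h_S(D)$ and $S$ lies in $Q_n$, we trade the degree $D$ against the translation scale. Optimizing gives the exponent $d^2/(2d-1)=K_d-\delta_d$.
\end{enumerate}

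For $d=3$, the Cayley--Bacharach and curve-rigidity refinement must be added to reach $n^2$, which closes the gap $\delta_3=0$.

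The main obstacle is step 2, the uncertainty principle quantifying that harmonic functions vanishing to high algebraic order on $S$ must have many points. In dimension three, the further obstacle is ruling out supports concentrated on low-degree algebraic curves, which is where the rigidity argument is needed.
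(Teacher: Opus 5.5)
There is a genuine gap. The lower bound is the substance of the theorem, and your proposal has no working mechanism for it. Step~2 is an assertion rather than an argument, and what it asserts, $h_S(D)\gtrsim D^{K_d}$, cannot be the intended route. With $D$ comparable to $n$ it would already give the paper's open conjecture $m_d(n)\gtrsim n^{K_d}$, and it would make your separate $d=3$ refinement unnecessary. With $D\le n^{\gamma}$ for an unspecified $\gamma$ it determines no exponent, and step~3 never actually derives $d^2/(2d-1)$. A Fourier transform of $u$ is also unavailable, since harmonic functions such as $\rho^{x_d}$ grow exponentially.

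What is missing is an algebraic bridge from position to translation. Because $[P,M_{x_i}]=V_i$ with $V_i=T_i-T_i^{-1}$, iterated commutators turn any $f$ vanishing on $E=\supp(u)\cap Q_n$ into the relation $\operatorname{top}(f)(V)u=0$ on $Q_{n-\deg f}$ (Lemma~\ref{lem:commutator}). Hence a single top-form ideal $J$, whose Hilbert function agrees with $H_E$ in low degree, controls both factors of $p(x)q(T)u$:
the polynomial factor through the shifted relations $f(x+h)T^hu=0$;
the translation factor through the rank-$2^d$ free basis $\prod_iT_i^{\epsilon_i}$ over $\K[V]$ and the determinant norm $\mathcal R_d$, which lies in $P\,\K[T,T^{-1}]$ by the adjugate identity and so annihilates $u$ globally.
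Testing against the $(L+1)^d$ independent vectors $\ell_z(x)T^{-z}u|_{G_L}=u(0)\delta_z$ (this is where $u(0)\neq0$ enters) gives Theorem~\ref{thm:full-provenance}. The Green-type bound $k_A(t)\le C_dH(t)^{(d-1)/d}$ of Proposition~\ref{prop:norm-defect} and a telescoping sum over $t\le R$ then give $R^{d+1}\le C_d[H(R)^2+RH(R)^{(2d-1)/d}]$. The exponent $d^2/(2d-1)$ comes from the second term.

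For $d=3$ you only name ``Cayley--Bacharach and curve rigidity''. The argument itself needs four ingredients. First, the moment relations $\sum_x c_R(a-x)u(x)q(x)=0$ for $\deg q<R$, which come from $P^R(qu)=0$, with every weight nonzero by sign-coherence of $c_R$ (Lemma~\ref{lem:cb}). Second, the Gram-matrix bound of $H_E(D-1)$ by the support in a thin shell, combined with averaging over radii. Third, Macaulay growth and Bigatti--Geramita--Migliore, which place $E$ on a reduced curve of degree $O(M/n)$ (Lemma~\ref{lem:carrier}). Fourth, interpolation on curves, B\'ezout and the port lemma, which force translation edges between components. Proposition~\ref{prop:port-cycle} then excludes such a family, because it would give invariance under a subspace of dimension at least $\lceil d/2\rceil\ge2$, impossible for a curve.

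On the upper bound, your odd-dimensional example is correct; it is the paper's construction with $\cosh(\theta x_d)$ in place of $\rho^{x_d}$. For even $d=2k$, however, your strip option fails. No $A_2$-eigenfunction with nonzero eigenvalue is supported on finitely many parallel lattice lines: check $\{x_1+x_2\in\{0,1\}\}$ by hand, or see the case $d=2$ of Theorem~\ref{thm-zariski}. Nor is any nonzero eigenfunction supported on a single coordinate line. Use instead $k-1$ antidiagonal pairs, and on the last pair take $\rho^{x_{2k-1}}$, constant in $x_{2k}$, with $\rho+\rho^{-1}=2d-2$. This gives exactly $(2n+1)^{k+1}$ support points in $Q_n^{(d)}$. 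That exact count is needed, since an $O(n^{k+1})$ count does not yield $m_d(n)\le(2n+1)^{K_d}$.
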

This result is sharp for dimension three, and nearly sharp for sufficiently large dimensions. The gap $\delta_d$ is uniformly bounded, tends respectively to (and smaller than) $3/4$ and $1/4$ along the even and odd dimensions.

\begin{remark}\label{rmk:equivalence}
    Just like complex harmonic function $u:\Z^d\to \C$, one may also define for real harmonic function
    \begin{equation*}
        m_{d,\R}(n):=\inf\left\{ |\supp(u)\cap Q_n^{(d)} |: u:\Z^d\to \R, \,\,A_du=2du, u(0)\neq 0\right\},\quad d\ge 1,
    \end{equation*}
    and write the previous one as $m_{d,\C}$ to make a distinction. Since real functions are also complex functions, we have $m_{d,\C}(n)\le m_{d,\R}(n)$.
    Conversely, for any complex function $u(0)\neq 0$, set 
    $$v(x)=\operatorname{Re}(\alpha u(x)) \quad \text{with }\alpha=\frac{\overline{u(0)}}{|u(0)|}.$$ 
    Since $A_d$ has real coefficients and the harmonic eigenvalue $2d$ is real,
    \begin{equation*}
        A_dv=2dv, \quad v(0)=|u(0)|\neq 0, \quad \text{and} \supp(v)\subset\supp(u).
    \end{equation*}
    Therefore $m_{d,\C}(n)\ge m_{d,\R}(n)$.
    So we have
    \begin{equation*}
        m_{d,\C}(n)= m_{d,\R}(n).
    \end{equation*}
\end{remark}

Our second result changes to viewpoint of Zariski topology and gives sharp Zariski-dimension of support. 
\begin{theorem}\label{thm-zariski}
    Let $d\ge 2$, $\lambda\in \C$. Then for any nonzero solution of $A_du=\lambda u$, we have
    \begin{equation}\label{eq:zariski-general}
        \dim_\C \overline{\supp(u)}^{\,\mathrm{Zar}}\ge \left\lceil \frac{d}{2}\right\rceil.
    \end{equation}
    Moreover, if $\lambda\neq 0$, then
    \begin{equation}\label{eq:zariski-nonzero}
        \dim_\C \overline{\supp(u)}^{\,\mathrm{Zar}}\ge \left\lfloor\frac{d}{2}\right\rfloor+1.
    \end{equation}
    Both bounds are optimal. In particular, the second one is attained in every dimension by a harmonic function.
\end{theorem}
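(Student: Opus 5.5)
The plan has two parts: lower bounds via a Fourier/$D$-module translation combined with an involutivity (Bernstein--Gabber) dimension count, and optimality via explicit tensor-product eigenfunctions. Let $S=\supp(u)$, $V=\overline{S}^{\,\mathrm{Zar}}\subset\C^d$, and $k=\dim V$.

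\emph{Lower bounds.} I would pass to the generating series $\hat u(z)=\sum_x u(x)z^x$ and read it as a formal solution of a holonomic-type system on the torus $(\C^*)^d$. The equation $A_du=\lambda u$ becomes multiplication by the Laurent polynomial $p(z)=\sum_i(z_i+z_i^{-1})-\lambda$. Every polynomial $f\in I(V)$ satisfies $f\cdot u=0$, and this becomes $f(z_1\partial_1,\dots,z_d\partial_d)\hat u=0$. The left ideal $J$ of the Weyl algebra on the torus generated by $p$ and by $\{f(z\partial):f\in I(V)\}$ annihilates a nonzero object. Hence $M=\mathcal D/J$ is nonzero, and by Bernstein's inequality together with Gabber's involutivity theorem its characteristic variety $\mathrm{Ch}(M)$ is a nonempty coisotropic subvariety of dimension $\ge d$.

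In logarithmic Darboux coordinates $(\log z,w)$ with $w_i=z_i\xi_i$, we have
\begin{equation*}
\mathrm{Ch}(M)\subset \{p(z)=0\}\times C(V),
\end{equation*}
where $C(V)$ is the asymptotic cone of $V$ (the zero set of the top-degree parts of $I(V)$), so $\dim C(V)=k$.

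The core of the argument is a linear-algebra count at a generic smooth point of a component $X$ of $\mathrm{Ch}(M)$. Let $F=T X\cap\{w\text{-directions}=0\}$ be the purely $z$-directional part of $TX$ and $G$ the $w$-directional part. Then $\dim F\ge \dim X-k\ge d-k$, and coisotropy $TX^{\perp}\subset TX$ forces the $z$-components of $TX$ to pair trivially against $F^{\perp}$-type vectors. I would aim to show that $F$ is isotropic while $TX^\perp\supset$ a complementary isotropic piece of size $\ge d-k$, which yields $2(d-k)\le d$, i.e. $k\ge\lceil d/2\rceil$.

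For $\lambda\ne0$ I would extract one more dimension from the Hamiltonian vector field of $p$. This field is purely $w$-directional, equal to $(z_i\partial_ip)_i$, and must be tangent to $X$. When $\lambda\neq 0$ this vector field cannot vanish identically on the relevant component, since $\{p=0\}$ avoids the critical points where all $z_i-z_i^{-1}=0$ except for special $\lambda$; those special $\lambda$ would need a separate check. This forces $C(V)$ to contain a direction transverse to the isotropic count, giving $k\ge\lfloor d/2\rfloor+1$; for even $d$ this is exactly the improvement by one. As a backup in case the microlocal route stalls, I would use the Hilbert-function viewpoint suggested by the abstract. There, $\dim V=k$ caps the number of degree-$\le t$ polynomial functions on $S\cap Q_n^{(d)}$ at $O(t^k)$, while a position-translation uncertainty principle for solutions of $Pu=\lambda u$ forces about $t^{d/2}$ independent ones.

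\emph{Optimality.} For $\lambda=0$, $d=2$, the function $u(a,a)=(-1)^a$ (zero off the diagonal) solves $A_2u=0$: diagonal points see only off-diagonal neighbours, and an off-diagonal neighbour $(a,a+1)$ sees $u(a,a)+u(a+1,a+1)=0$. For general $d$, I would pair coordinates and take products of such diagonal solutions in the variables $(x_{2j-1},x_{2j})$, adding a free factor $1$ in the last variable when $d$ is odd. I would then check that the resulting support is a union of $\lceil d/2\rceil$-dimensional linear pieces on which $A_du=0$ still holds, which needs a short verification of the cross terms. For $\lambda\neq0$, the harmonic example should be the one realizing the upper bound $(2n+1)^{K_d}$ in Theorem~\ref{thm-quantitative}: its support is contained in a finite union of $K_d$-dimensional affine sublattices, and its Zariski closure has dimension exactly $K_d$.

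The main obstacle is the symplectic dimension count giving the factor $2$: the crude bound $\dim \mathrm{Ch}\le (d-1)+k$ only yields $k\ge1$, so the isotropy of the $z$-fibres must genuinely be combined with coisotropy.
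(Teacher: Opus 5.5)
Your setup is sound: $\C^{\Z^d}$ is a module over the Weyl algebra of the torus, $\mathcal D/J\twoheadrightarrow\mathcal D u\neq 0$, and by Gabber the characteristic variety is involutive and lies in $\{p=0\}\times C(V)$. The gap is in the step that carries the whole theorem, $2(d-k)\le d$. You only announce it, and the mechanism you point to cannot deliver it.

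At a generic point of a component $X$, let $\Sigma$ and $\Phi$ be the tangent spaces of the base and fibre projections, so $\dim\Sigma\le d-1$ and $\dim\Phi\le k$. Coisotropy gives exactly $\Phi^{\circ}\oplus\Sigma^{\circ}=(\Sigma\oplus\Phi)^{\perp}\subset TX\subset\Sigma\oplus\Phi$. Hence $\dim\Sigma+\dim\Phi\ge d$, which again yields only $k\ge1$. No refinement of the isotropy bookkeeping can do better, because the conclusion fails for other symbols with identical symplectic data. For the operator $T_1+T_1^{-1}-\lambda$ on $\Z^d$, the solution $\rho^{x_1}\mathbf 1_{\{x_2=\cdots=x_d=0\}}$ has $k=1$ and a Lagrangian characteristic variety.

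The missing idea is to use the shape of $p$. Let $\Lambda:=\Phi^{\circ}$ be the annihilator of $\Phi$ among base directions, in $\log z$ coordinates. It depends only on $w$, and $(\Lambda,0)\subset TX$. So the fibre $X_w\subset\{p=0\}$ is invariant under $z\mapsto z\,e^{\mu}$ for $\mu\in\Lambda$. This gives $\sum_i(z_ie^{\mu_i}+z_i^{-1}e^{-\mu_i})=\lambda$ for all $\mu\in\Lambda$. Independence of characters then forces each nonzero $e_i^*|_{\Lambda}$ to equal $\pm e_j^*|_{\Lambda}$ for some $j\neq i$. These functionals span $\Lambda^*$, so $2\dim\Lambda\le d$ and $k\ge\dim\Phi=d-\dim\Lambda\ge d/2$. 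This is the Fourier dual of the paper's position-space argument. There, Lemma~\ref{lem:ports} and Proposition~\ref{prop:port-cycle} produce stabilizers in forward cones, whose normalized matrix has $\operatorname{tr}M=d$ and $\|M\|^2\le 2d$, so $\operatorname{rank}M\ge d/2$.

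Your $\lambda\neq0$ step also fails as stated: tangency and nonvanishing of $H_p$ cannot separate $\lambda\neq0$ from $\lambda=0$. Take $\phi(a,b)=(-1)^a\mathbf 1_{\{a+b=0\}}$, with $d=2$ and $\lambda=0$. Its characteristic variety is $\{z_2=-z_1\}\times\{w_1+w_2=0\}$. On it, $H_p$ is, up to sign, $(z_1-z_1^{-1})(\partial_{w_1}-\partial_{w_2})$: generically nonzero and tangent, yet $k=1$. Moreover, the generic point of the positive-dimensional set $\pi_s(X)$ avoids the finitely many critical points for every $\lambda$. Your criterion instead singles out exactly the harmonic value $\lambda=2d$, since $(1,\dots,1)\in\{p=0\}$ there.

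The extra dimension actually comes from the equality case of the pairing count. If $d=2q$ and $k=q$, then $\dim\Lambda=q$, every coordinate is paired, and none restricts to zero on $\Lambda$. The constant-character coefficient in the identity above is then $-\lambda$, forcing $\lambda=0$. The paper's counterpart is the equality analysis giving $W=\operatorname{span}\{e_{a_j}+\epsilon_je_{b_j}\}$, followed by the layer-generating function and McCoy's theorem in Lemma~\ref{lem:paired-layer-rigidity}.

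Two smaller points:
\begin{itemize}
\item Your backup route does give \eqref{eq:zariski-general}. Theorem~\ref{thm:uncertainty} holds verbatim for $P_\lambda$, since $[P_\lambda,M_{x_i}]=V_i$ and the adjugate identity do not depend on $\lambda$. Together with $H_E(t)\le\dim_\C\C[x]_{\le t}/I(V)_{\le t}=O(t^k)$, this gives $d\le 2k$. It does not give the even-dimensional improvement.
\item In your odd-$d$ product, a constant factor in $x_d$ gives $A_du=2u$, not $A_du=0$. This is harmless for optimality, since $\lceil d/2\rceil=\lfloor d/2\rfloor+1$ in odd dimensions, but the claim as written is false.
\end{itemize}
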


Both theorems suggest the following reasonable conjecture.
\begin{conjecture}
For every integer $d\ge 4$, there exist constants $C_d\ge c_d>0$ depending only on $d$ such that
\begin{equation*}
    c_dn^{K_d}\le  m_d(n)\le C_d n^{K_d},\quad \forall n\in \N.
\end{equation*}
\end{conjecture}

\subsection{Comment on history}\label{subsec:history}
The study of discrete harmonic functions on lattices or more generally weighted graphs has been ongoing for a long time. We refer to~\cite{uniqueness2023malinnikova} for an overview. For brevity, we present only results that are closely related to our work or similar topics. 

In dimension two, the suport-growth problem has been given by Buhovsky--Logunov--Malinnikova--Sodin \cite{BLMS}. Their results revealed phenomena that differ from those in the continuous case. In Euclidean space $\R^d$, it is well-known that if $u$ is a harmonic function and bounded on the whole space, then it must be a constant function. However, this conclusion can be strengthened in two dimensional lattice $\Z^2$. More precisely, it was proved that: if $|u|$ is bounded on $(1-\varepsilon)$ portion of $\Z^2$ with numerical constant $\varepsilon$ being sufficiently small, then $u$ is a constant function.  This result follows from two more general results \cite[Theorem (A)]{BLMS} and \cite[Theorem (B)]{BLMS}.
In particular,
Theorem (A) implies that 
\begin{equation*}
    \left| \left\lbrace|u|>e^{-an}\max_{Q_n^{(2)}} |u |\right\rbrace\cap Q_{2n}^{(2)}\right|\ge c_2n^2
\end{equation*}
for $n$ sufficiently large, see \cite{uniqueness2023malinnikova} for the survey. This implies the following support estimate
\begin{equation*}
    \left|\supp(u)\cap Q_n^{(2)} \right|\ge c_2 n^2,\quad \forall n\ge 1.
\end{equation*}

The estimate above belongs to the broader study of quantitative unique
continuation on lattices. Earlier results of Guadie and Malinnikova and of
Lippner and Mangoubi established three balls inequalities and propagation
of smallness estimates for discrete harmonic functions
\cite{GuadieMalinnikova2014,LippnerMangoubi2015}. Carleman estimates later
gave related results for a broad class of discrete Schr\"odinger
operators \cite{FernandezBertolinEtAl2021}. These results control the
growth of norms rather than the cardinality of the support. More recently,
the large portion Liouville theorem of
Buhovsky, Logunov, Malinnikova, and Sodin was extended to periodic planar
graphs with periodic positive conductances
\cite{BouRabeeCoopermanGanguly2025}.

Another source of the support problem is Anderson localization.
Bourgain and Kenig used quantitative unique continuation to study the
continuous Anderson model with Bernoulli potential
\cite{BourgainKenig2005}. Ding and Smart developed a probabilistic
substitute on $\Z^2$. With high probability, their estimate forces a
local eigenfunction to be quantitatively large on a set of cardinality
at least of order
\begin{equation*}
    L^{3/2}(\log L)^{-1/2}.
\end{equation*}
They used this estimate to prove localization near the edge of the
spectrum \cite{DingSmart2020}. Li subsequently improved the cardinality
to order $L^2$ in a large disorder regime, with a narrower energy window,
and proved localization outside small neighborhoods of finitely many
exceptional energies \cite{LiLargeDisorder2022}. Hurtado later extended
the probabilistic unique continuation and localization argument to
independent potentials that need not be identically distributed
\cite{Hurtado2026}.

In dimension three, Li and Zhang proved a deterministic quantitative
unique continuation theorem for discrete Schr\"odinger equations with
bounded potential. In the zero potential case, their result implies
\begin{equation*}
    \left|
        \left\{
            x\in Q_n^{(3)}:
            |u(x)|\geq e^{-Cn^3}|u(0)|
        \right\}
    \right|
    \geq c\frac{n^2}{\log n}
\end{equation*}
for all sufficiently large $n$ \cite{LiZhang2022}. Consequently,
\begin{equation*}
    \left|\supp(u)\cap Q_n^{(3)}\right|
    \geq c\frac{n^2}{\log n}.
\end{equation*}
This is the closest earlier estimate to our three dimensional result.
The present work removes the logarithmic loss when the potential is zero.

For general dimensions, Krymskii proved that every nonzero solution of a
stationary discrete Schr\"odinger equation on $\Z^d$ has support dimension
at least
\begin{equation*}
    \log_2 d-7
\end{equation*}
\cite{Krymskii}. His argument applies to arbitrary potentials and gives a
dimension dependent estimate in every dimension.  In the particular case $V=0$, Theorem~\ref{thm-quantitative} improves this to $K_d-\delta_d\asymp \frac{d}{2}$.

Very recently, Li considered the corresponding cardinality problem on
finite boxes with Dirichlet boundary conditions and arbitrary real
potential. Let $s_d(N)$ denote the smallest possible support cardinality
of such a solution on a $d$ dimensional box of radius $N$, under the
condition that the solution is nonzero at the origin. Li proved the
dimension reduction inequality
\begin{equation*}
    s_d(N)\geq s_{d-1}(N).
\end{equation*}
Combining this inequality with the estimate of Li and Zhang gives
\begin{equation*}
    s_4(N)\geq c\frac{N^2}{\log N}.
\end{equation*}
Li also constructed solutions whose support has order
$N^{\lceil d/2\rceil}$ \cite{LiSupport2026}. This finite box problem
differs from ours because it permits an arbitrary potential, whereas we
study a fixed harmonic function on the whole lattice.

In his newest related preprint, Li studied a different discrete unique
continuation problem on a lattice simplex. He proved that a solution of
the complete oriented simplex relations which is nonzero at the balanced
point satisfies
\begin{equation*}
    |\supp(g)|\geq c_m R^{\lceil m/2\rceil}.
\end{equation*}
He also gave constructions showing that this exponent is optimal
\cite{LiSimplex2026}. The case $m=3$ is closely related to the triangular
relations used by Li and Zhang. The simplex model has a different
geometry from the nearest neighbor lattice considered here, so its result
is complementary rather than directly applicable.

\subsection{Organization of the article}

In Section~\ref{sec:overview}, we give an overview of the proofs of our main results.  
In Section~\ref{sec:explicit-construct}, we construct sparse eigenfunctions and prove the corresponding sharpness results.  
In Sections~\ref{sec:proof-any-d} and~\ref{sec:proof-sharp-3}, we prove the quantitative lower bounds for $d\geq4$ and $d=3$, respectively.  
Finally, in Section~6, we prove the Zariski-dimension bounds in Theorem~\ref{thm-zariski}.

\section*{Declaration of AI use}
In several rounds guided by human input, OpenAI Codex (Sol model, ultra reasoning effort) developed all proofs in this paper and an early draft. The AI-assisted work involved around 90 hours of cumulative active runtime. During the latter half of the work, one main chat session worked on the proofs and assigned specific tasks to two supporting chat sessions. The three sessions automatically exchanged messages to share partial results and keep their work synchronized. The substantive mathematical problem-solving was performed by the AI model. Yunlei Wang is the sole named human author. He provided mathematical guidance, rewrote the earlier drafts, independently checked every argument and citation, ensured the paper's accuracy and integrity, and accepts full responsibility for its content.

\section{Proof overview}\label{sec:overview}
The upper bounds in Theorem \ref{thm-quantitative} and the sharpness results in our main results come from the explicit construction of eigenfunctions in Section~\ref{sec:explicit-construct}. We only review the lower bound proofs.
\subsection{The quantitative lower bound for
\texorpdfstring{$d\ge 4$}{}}

Set
\begin{equation*}
    R=\left\lfloor\frac n4\right\rfloor,\quad
    E=\supp(u)\cap Q_n^{(d)},\quad
    J=J_R(E),\quad
    A=\K[V_1,\ldots,V_d]/J,\quad
    H(t)=H_A(t).
\end{equation*}
Here $J_R(E)$ is the top-form ideal given in \eqref{eq:top-form-ideal}.
The first key observation is to notice the relation between support set and the Hilbert function. Lemma~\ref{lem:footprint} gives
\begin{equation*}
    H(t)=H_E(t)\leq |E|,\quad 0\leq t\leq R.
\end{equation*}
Hence it suffices to bound the Hilbert function $H$ from below. The crucial feature of $J_r(E)$ is that it controls both position and translation. On the position side, the polynomials vanishing on $E$ allow polynomial coefficients to be reduced to the standard monomials of $J_r(E)$. On the translation side, Lemma~\ref{lem:commutator} shows that if $f|_E=0$, then its top homogeneous part gives
\begin{equation*}
    \operatorname{top}(f)(V)u=0,
    \quad
    V_i=T_i-T_i^{-1}
\end{equation*}
on a smaller region. This follows by iterating commutators with $P$. Both position and translation reductions arise from the same relation
\begin{equation*}
    f|_E=0
    \Longrightarrow
    \begin{cases}
        f(x+h)T^hu=0,
        & \text{position reduction},\\
        \operatorname{top}(f)(V)u=0,
        & \text{translation reduction}.
    \end{cases}
\end{equation*}

Consider the span $\mathcal W_{a,b}(G_L)$ of the restrictions of
\begin{equation*}
    p(x)q(T)u(x),\quad
    \deg p\leq a,\quad
    q(T)=\sum_{\lVert h\rVert_1\leq b}c_hT^h.
\end{equation*}
Here $a,b,R$ satisfy the margin condition~\eqref{eq:margin}.
Let $\mathcal S_a$ be the standard monomials of $J$ of degree at
most $a$. 

Position reduction gives
\begin{equation*}
    \left.p(x)q(T)u\right|_{G_L}
    =
    \left.
    \sum_{m\in\mathcal S_a}m(x)q_m(T)u
    \right|_{G_L},
    \quad
    |\mathcal S_a|=H_A(a).
\end{equation*}
Thus $H_A(a)$ counts the position components.

For the translation reduction, use the free $\K[V]$-basis
\begin{equation*}
    e_\epsilon=\prod_{i=1}^dT_i^{\epsilon_i},
    \quad
    \epsilon\in\{0,1\}^d,
    \quad
    q_m(T)=\sum_{\epsilon\in\{0,1\}^d}
    a_{m,\epsilon}(V)e_\epsilon,
    \quad
    \deg a_{m,\epsilon}\leq b.
\end{equation*}
The choice of $\epsilon$ contributes $2^d$.
The first form of the position--translation principle uses only the top homogeneous part $R_{d,\infty}$ of the determinant norm. Theorem~\ref{thm:uncertainty} gives
\begin{equation*}
    (L+1)^d
    \leq
    \dim\mathcal W_{a,b}(G_L)
    \leq
    2^dH_J(a)H_{J+(R_{d,\infty})}(b).
\end{equation*}
Here $H_J(a)$ counts the position components, while
$H_{J+(R_{d,\infty})}(b)$ counts the translation coefficients after
reduction by the top-form relation. This inequality already exhibits
the position--translation decomposition, but it loses the lower-degree
terms of $\mathcal R_d$. To obtain the stronger estimate needed for the
final exponent, we refine it further to obtain Theorem~\ref{thm:full-provenance} by considering the full,
inhomogeneous norm $\mathcal R_d$, thus gives a better estimate.

Since the adjugate
identity makes $\mathcal R_d$ a global annihilator, the remaining
coefficient space is
\begin{equation*}
    C_b=
    \frac{A_{\leq b}}{\mathcal R_dA_{\leq b-D_d}},
    \quad
    D_d=\deg\mathcal R_d=2^d,
    \quad
    \nu_b=\dim_\K C_b.
\end{equation*}
Choose representatives $r_1,\ldots,r_{\nu_b}$ of a translation basis of $C_b$. Every coefficient has a decomposition
\begin{equation*}
    a_{m,\epsilon}(V)=\sum_{\ell=1}^{\nu_b} \gamma_{m,\epsilon,\ell}r_\ell (V)+\mathcal{R}_d(V)h_{m,\epsilon}(V)+j_{m,\epsilon}(V).
\end{equation*}
Modulo the local $J(V)$ relations and the global $\mathcal R_d$
relation, every element of $\mathcal W_{a,b}(G_L)$ is a linear
combination of
\begin{equation*}
    \left.
    \sum_{m\in\mathcal S_a}
    \sum_{\epsilon\in\{0,1\}^d}
    \sum_{\ell=1}^{\nu_b}
    \gamma_{m,\epsilon,\ell}
    \underbrace{m(x)}_{\text{position}}\,
    \underbrace{r_\ell(V)}_{\text{translation coefficient}}\,
    \underbrace{e_\epsilon}_{\text{Laurent basis}}\,
    u
    \right|_{G_L}.
\end{equation*}
Rank and nullity give
\begin{equation*}
    \nu_b=
    H_A(b)-H_A(b-D_d)
    +
    k_A(b-D_d).
\end{equation*}
Together again with multivariable Lagrange interpolation, this yields
\begin{equation}\label{eq:overview-full-ineq}
\begin{aligned}
    (L+1)^d
    \le \dim\mathcal W_{a,b}(G_L)\le \underbrace{H_A(a)}_{\substack{\text{position}\\\text{components}}}
    \underbrace{2^d}_{\substack{\text{Laurent}\\\text{basis}}}
    \left[
        \underbrace{H_A(b)-H_A(b-D_d)}
        _{\substack{\text{ordinary translation}\\\text{boundary}}}
        +
        \underbrace{k_A(b-D_d)}
        _{\substack{\text{exceptional translation}\\\text{kernel}}}
    \right].
\end{aligned}
\end{equation}

It remains to estimate $k_A(b-D_d)$, which is given by
Proposition~\ref{prop:norm-defect}. We skech the proof of this estimate. Put
\begin{equation*}
    \mathfrak m=(V_1,\ldots,V_d),\quad
    A'=A/\mathfrak m^{t+1}A,\quad
    \mathcal K_t=
    \left(0:_{A'[z]}\widetilde{\mathcal R}_d\right).
\end{equation*}
Then $\dim_\K A'=H_A(t)$, and $\mathcal K_t$ is a torsion-free,
hence free, $\K[z]$-module. After inverting $z$, the change
$V_i\mapsto zV_i$ identifies its rank with the cokernel dimension
of multiplication by $\mathcal R_d$ on $A'_{\K(z)}$:
\begin{equation*}
    k_A(t)
    \leq
    \dim_\K(\mathcal K_t)_t
    \leq
    \operatorname{rank}_{\K[z]}\mathcal K_t
    =
    \dim_{\K(z)}
    \frac{A'_{\K(z)}}{\mathcal R_dA'_{\K(z)}}.
\end{equation*}

Introduce the signed-permutation invariants
$p_i=\sum_jV_j^{2i}$. The ring $\K[V]$ is free of rank $2^dd!$
over $\K[p_1,\ldots,p_d]$, and scalar dilation gives $p_i$ the
distinct positive weight $2i$. In the nilpotent algebra $A'$, the
norm factors as
\begin{equation*}
    \mathcal R_d=U\Gamma,\quad
    \Gamma=
    \sum_{j=1}^d\sqrt{4+V_j^2}-2d
    =
    \sum_{i=1}^da_ip_i
    +O\bigl((p_1,\ldots,p_d)^2\bigr),
    \quad a_i\neq0,
\end{equation*}
where $U$ is a unit. The weighted Green boundary theorem, Theorem~\ref{thm:green-boundary} therefore
gives
\begin{equation*}
    k_A(t)
    \leq
    C_d\bigl(\dim_\K A'\bigr)^{(d-1)/d}
    =
    C_dH_A(t)^{(d-1)/d}.
\end{equation*}

For $t_d\leq t\leq R$, the margin conditions hold after choosing the
dimensional constant $t_d$ sufficiently large. Take
$a=b=t$ and $L=\lfloor t/d\rfloor$ in
\eqref{eq:overview-full-ineq}. Proposition~\ref{prop:norm-defect}
then gives
\begin{equation*}
    t^d
    \leq
    C_d\left(H(t)\bigl(H(t)-H(t-D_d)\bigr)+H(t)^{(2d-1)/d}\right).
\end{equation*}
Since
\begin{equation*}
    H(t)\bigl(H(t)-H(t-D_d)\bigr)
    \leq
    H(t)^2-H(t-D_d)^2,
\end{equation*}
summing through $R$ in the $D_d$ residue classes yields
\begin{equation*}
    R^{d+1}
    \leq
    C_d\left[
        H(R)^2+RH(R)^{(2d-1)/d}
    \right].
\end{equation*}
Consequently,
\begin{equation*}
    H(R)\geq c_dR^{d^2/(2d-1)}.
\end{equation*}
Finally $H(R)=H_E(R)\leq |E|$, which proves the required support
bound. 
\subsection{The sharp quadratic bound in diemsnion three} The preceding argument indeed gives the exponent $9/5$ when $d=3$. Hence a different argument is needed to obtain the sharp exponent $2$. By Remark~\ref{rmk:equivalence} the harmonic function may be taken to be real-valued. Denote by $B_R$ a ball with $\ell^1$ radius $R$. 

For a polynomial $q$ with degree less than $R$, the commutator calculation gives
\begin{equation*}
    P^R(qu)=0.
\end{equation*}
Expanding this identity at a center $a$ produces a relation
\begin{equation*}
    \sum_{x\in \supp(u)\cap (a+B_R)} c_R(a-x)u(x)q(x)=0
\end{equation*}
whose coefficients are nonzero at every point of the local support. Consequently, each centered support set satisfies a Cayley--Bacharach condition: a polynomial of degree at most $R-1$ cannot have nonzero value at exactly one point among a set of points. It follows that the low-degree evaluation Hilbert function is controlled by the number of support points in a boundary shell
\begin{equation*}
    H_E(D-1)\le |\supp(u)\cap (B_r\setminus B_{r-D})|, \quad E=\supp(u)\cap B_r.
\end{equation*}

Suppose, toward a contradiction, that
\begin{equation*}
    M=\left|\supp(u)\cap B_n\right|\ll n^2.
\end{equation*}
First, we establish a simple preliminary result $M\geq n+1$. We choose
$D$ comparable to $M/n$ and average over radii
$n/3\leq r\leq2n/3$. This produces a radius $r$ for which both a
width-$D$ shell $B_r\setminus B_{r-D}$ and the outer width-$2$ shell $B_r\setminus B_{r-2}$ contain few support
points. The preceding rank estimate therefore makes $H_E(D-1)$
small. Then applying Macaulay growth, together with the
Cayley--Bacharach condition, we obtain Lemma~\ref{lem:carrier}. It forces
$\supp(u)\cap B_r$ to lie on a reduced algebraic curve of degree
$O(M/n)$.

The final step rules out such a curve carrier. The eigenvalue equation
has the following local ``port'' consequence: from every supported
point and in every coordinate direction, one can reach another
supported point by one of finitely many short forward displacements.
The Cayley--Bacharach relations force many support points to lie on
each irreducible component of the carrier, whereas B\'ezout's theorem
bounds the number of intersections between two distinct translated
components, see Lemma~\ref{lem:packets}. I t follows that It follows that the finite family of curve components must 
be closed under suitable translations in every coordinate direction.

Following these translations around cycles produces stabilizing
vectors $v_1,\ldots,v_d$. After normalizing them, let $M_0$ be the
matrix whose $i$-th row is $v_i/(v_i)_i$. The geometry of the
forward displacements gives
\begin{equation*}
    \operatorname{tr}M=d,
    \quad
    \|M\|^2\leq2d.
\end{equation*}
The trace inequality for singular values and Cauchy--Schwarz imply
\begin{equation*}
    d
    \leq
    \sqrt{\operatorname{rank}M}\,
    \|M\|,
\end{equation*}
and hence
\begin{equation*}
    \operatorname{rank}M
    \geq
    \left\lceil\frac{d}{2}\right\rceil.
\end{equation*}
Therefore every component is invariant under a linear translation
space of dimension at least $\lceil d/2\rceil$. This is impossible
for a curve when $d\geq3$. The contradiction proves
the desired result.

\subsection{The Zariski-dimension bounds}
The proof of Theorem~\ref{thm-zariski} reuses the port mechanism.
Let
\begin{equation*}
    S=\supp(u),
    \qquad
    X=\overline{S}^{\,\mathrm{Zar}},
    \qquad
    k=\dim X,
\end{equation*}
and consider the top-dimensional irreducible components of $X$.
For each supported point $p$ and each coordinate $i$, either
$p+e_i$ is supported, or evaluating the eigenvalue equation at
$p+e_i$ forces another neighbor of $p+e_i$ to be supported. Thus
there is a displacement $h$ from a fixed finite set such that
\begin{equation*}
    p+h\in S,
    \qquad
    h_i>0,
    \qquad
    \sum_{j\neq i}|h_j|\leq h_i.
\end{equation*}

Since the possible displacements and the irreducible components are
both finite, irreducibility shows that, for every top-dimensional
component $V$ and every $i$, there are another component $V'$
and an admissible displacement $h$ such that
\begin{equation*}
    V'=V+h.
\end{equation*}
Following these translations around the finite family of components
produces stabilizing vectors $v_1,\ldots,v_d$. Consequently, each
component $V$ is invariant under the complex linear space
\begin{equation*}
    W=\operatorname{span}_{\C}\{v_1,\ldots,v_d\}.
\end{equation*}
The same matrix estimate used above gives
\begin{equation*}
    \dim X
    \geq
    \dim W
    \geq
    \left\lceil\frac d2\right\rceil,
\end{equation*}
which proves \eqref{eq:zariski-general}. When $d$ is odd, this is
already equal to $\lfloor d/2\rfloor+1$, so it also proves
\eqref{eq:zariski-nonzero}.

It remains to exclude equality when $d=2q$ and $\lambda\neq0$.
If $\dim X=q$, equality must hold throughout the matrix estimates.
The equality conditions are rigid: after relabeling the coordinates,
they pair the coordinate directions and give
\begin{equation*}
    W
    =
    \operatorname{span}_{\C}
    \left\{
        e_{a_j}+\epsilon_je_{b_j}:
        1\leq j\leq q
    \right\},
    \qquad
    \epsilon_j\in\{\pm1\}.
\end{equation*}
Every top-dimensional component is therefore an affine $q$-plane
parallel to a subspace of this form.

Fix one such direction $W$, and retain from $u$ only the part
supported on components parallel to $W$. Denote the resulting
function by $w$. Its support has Zariski dimension $q$, whereas
the support of
\begin{equation*}
    (A_{2q}-\lambda)w
\end{equation*}
is contained in intersections with components of other directions and
therefore has dimension less than $q$.

Lemma~\ref{lem:paired-layer-rigidity} shows that this situation is
possible only when $\lambda=0$. Its proof decomposes $w$ into its
finitely many parallel layers and records them as a Laurent polynomial
whose coefficients are sequences along $W$, modulo sequences with
lower-dimensional support. The equation for
$(A_{2q}-\lambda)w$ becomes a Laurent-polynomial annihilation
relation. If $\lambda\neq0$, its constant coefficient is the unit
$-\lambda$, and then McCoy's annihilator theorem then rules out a nonzero
solution. This contradiction excludes $\dim X=q$, and hence
\begin{equation*}
    \dim X
    \geq
    q+1
    =
    \left\lfloor\frac d2\right\rfloor+1.
\end{equation*}

\section{Sparse product constructions} \label{sec:explicit-construct}
This section presents explicit constructions of eigenfunctions, which give the upper bounds of Theorem \ref{thm-quantitative} and sharpness conclusion in Theorem \ref{thm-zariski}.

\subsection{Construction of eigenfunctions}
We start from an eigenfunction
\begin{equation*}
    \phi(a,b)=(-1)^a\mathbf{1}_{\left\{a+b=0\right\}}, \quad \forall (a,b)\in \Z^2.
\end{equation*}
This can be found in\cite[Remark 1.3]{BLMS} after alternating the second coordinate. It satisfies $ A_2\phi =0$.

First, we use it to construct harmonic functions in arbitrary dimension. For $d\ge 2$, it can be written as
\begin{equation*}
    d=2k+\ell 
\end{equation*}
for some integer $k$ and $\ell\in \{1,2\}$. Let 
\begin{equation*}
    \gamma=2d-2(\ell-1).
\end{equation*}
Then we define the function
\begin{equation}\label{eq:product-contruction}
    u_d(x):=\left(\prod_{j=1}^k \phi(x_{2j-1},x_{2j})\right)\rho^{x_{2k+1}}
\end{equation}
where $\rho:=(\gamma+\sqrt{\gamma^2-4})/2\ge 1$ is a solution of
\begin{equation*}
    \rho+\rho^{-1}=\gamma.
\end{equation*}
For $\ell=2$, the right-hand side of \eqref{eq:product-contruction} is independent of the last coordinate $x_{2k+2}$. The following proposition gives the exact size of supports.
\begin{proposition}
    The function $u_d$ given in \eqref{eq:product-contruction} is real harmonic, $u_d(0)=1$ and 
    \begin{equation*}
        |\supp(u_d)\cap Q_n^{(d)}|=(2n+1)^{\lfloor d/2\rfloor+1}.
    \end{equation*}
\end{proposition}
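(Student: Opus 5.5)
The plan is to verify the three claims directly. The key fact is that $A_d$ splits as a sum of commuting operators acting on disjoint groups of coordinates, so a product of functions of disjoint variable groups transforms termwise. Concretely, write $x=(y^{(1)},\dots,y^{(k)},z)$ with $y^{(j)}=(x_{2j-1},x_{2j})\in\Z^2$ and $z$ the remaining $\ell$ coordinates. For $u=f_1(y^{(1)})\cdots f_k(y^{(k)})g(z)$, each summand $u(x+e_i)+u(x-e_i)$ shifts only the factor containing $x_i$. This gives
\begin{equation*}
    A_du=\sum_{j=1}^k (A_2f_j)\prod_{i\neq j}f_i\, g+\Bigl(\prod_i f_i\Bigr)A_\ell g .
\end{equation*}

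\textbf{Step 1: $A_2\phi=0$.} We check this by hand.
\begin{itemize}
    \item The sum $\phi(a+1,b)+\phi(a-1,b)+\phi(a,b+1)+\phi(a,b-1)$ can be nonzero only when $a+b=\pm1$.
    \item If $a+b=-1$, the surviving terms are $\phi(a+1,b)=(-1)^{a+1}$ and $\phi(a,b+1)=(-1)^a$, which cancel.
    \item The case $a+b=1$ is symmetric: the surviving terms are $\phi(a-1,b)=(-1)^{a-1}$ and $\phi(a,b-1)=(-1)^a$, which also cancel.
\end{itemize}
Hence every pair factor contributes $0$ in the splitting formula above.

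\textbf{Step 2: the last $\ell$ coordinates.}
\begin{itemize}
    \item For $\ell=1$, $g(z)=\rho^{z}$ satisfies $A_1g=(\rho+\rho^{-1})g=\gamma g$, and here $\gamma=2d$.
    \item For $\ell=2$, $g(z_1,z_2)=\rho^{z_1}$ satisfies $A_2g=(\rho+\rho^{-1})g+2g=(\gamma+2)g=2d\,g$, since $\gamma=2d-2$. The extra $2$ comes from the free coordinate, along which $g$ is constant.
\end{itemize}
In both cases $A_du_d=2du_d$. Since $\gamma\ge 2d-2\ge2$, the root $\rho$ is real and $\rho\ge1$ (with $\rho=1$ in the degenerate case $d=2$). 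Thus $u_d$ is real-valued, and $u_d(0)=\phi(0,0)^k\rho^0=1$.

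\textbf{Step 3: the support count.}
\begin{itemize}
    \item Since $\rho^{x_{2k+1}}\neq0$, the support of $u_d$ is exactly $\{x: x_{2j-1}+x_{2j}=0,\ 1\le j\le k\}$, with the last $\ell$ coordinates free.
    \item In $\{-n,\dots,n\}^2$ the antidiagonal $a+b=0$ has exactly $2n+1$ points, namely $(a,-a)$ for $|a|\le n$.
    \item Each free coordinate contributes a factor $2n+1$, so $|\supp(u_d)\cap Q_n^{(d)}|=(2n+1)^{k+\ell}$.
    \item Finally $k+\ell=\lfloor d/2\rfloor+1$ in both parities: for odd $d$, $k=(d-1)/2$ and $\ell=1$; for even $d$, $k=(d-2)/2$ and $\ell=2$.
\end{itemize}

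There is no real obstacle here. The only points needing care are the bookkeeping of the constant contribution $2$ from the free coordinate when $\ell=2$, and the sign cancellation in $A_2\phi=0$.
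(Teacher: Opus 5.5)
Your proof is correct and follows the paper's route. Like the paper, you compute factor by factor using $A_2\phi=0$ and $\rho+\rho^{-1}=\gamma$, account for the extra $2$ from the constant last coordinate when $\ell=2$, and count $2n+1$ points for each antidiagonal pair and each free coordinate. Your hand verification of $A_2\phi=0$ and your check that $\rho$ is real (since $\gamma\ge 2$) only fill in details the paper takes from Buhovsky--Logunov--Malinnikova--Sodin or leaves implicit.
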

\begin{proof}
First we show that $u_d$ is real harmonic. Since each shift in coordinate pair $(x_{2j-1},x_{2j})$ only changes the factor $\phi(x_{2j-1},x_{2j})$, we obtain
\begin{equation*}
    \sum_{i=1}^2 \left(u_d(x+e_{2j-i+1})+u_d(x-e_{2j-i+1})\right)=0
\end{equation*}
where we used the fact $A_2 \phi =0$. For the shifts in the remaining term $\rho^{x_{2k+1}}$, it gives
\begin{equation*}
    \rho^{x_{2k+1}+1}+\rho^{x_{2k+1}-1}=(\rho+\rho^{-1})\rho^{x_{2k+1}}=\gamma \rho^{x_{2k+1}}.
\end{equation*}
If $\ell =1$, summing the above together gives
\begin{equation*}
    A_du_d=\gamma u_d =2d u_d.
\end{equation*}
If $\ell=2$, notice that $\gamma=2d-2$ and the shifts of the remaining coordinate $x_{2k+2}$ give $2u_d$ since $u_d$ is constant with respect to $x_{2k+2}$. Hence in this case
\begin{equation*}
    A_d u_d=(\gamma+2)u_d=2du_d.
\end{equation*}
In both cases, $u_d$ is harmonic. It is easy to check that $u_d(0)=1$.

Now we count the support. For each paired block, $\phi(x_{2j-1},x_{2j})$ is nonzero exactly when $x_{2j-1}+x_{2j}=0$. Thus in $[-n,n]^2$ it there exist exactly the $2n+1$ nonzero points. The exponential coordinate is never zero and therefore also $2n+1$ choices. Finally, we have
\begin{equation*}
    |\supp(u_d)\cap Q_n^{(d)}|=(2n+1)^{k+\ell}=(2n+1)^{\lfloor d/2\rfloor+1}
\end{equation*}
where we used $k+\ell =\lfloor d/2\rfloor +1$, which completes the proof.
\end{proof}

\subsection{Sharpness of Zariski dimenion bound}\label{subsec:sharp-zariski}
    Let
    \begin{equation*}
        L=\lbrace z\in \C^d:z_{2j-1}+z_{2j}=0,1\le j\le k\rbrace
    \end{equation*}
    Then the support set $\supp (u_d)$ is given by
    \begin{equation*}
        \supp(u_d)=L\cap \Z^d.
    \end{equation*}
    Hence $\supp(u_d)$ is contained in $(\lfloor d/2\rfloor+1)$-dimensional complex subspace $L$. Moreover, it is Zariski dense in $L$ by a simple induction argument. Hence
    \begin{equation*}
        \dim_\C \overline{\supp (u_d)}^{\,\mathrm{Zar}}=\dim_\C L = \left\lfloor \frac{d}{2}\right\rfloor +1.
    \end{equation*}
    This gives the sharpness of Theorem \ref{thm-zariski} at eigenvalue $2d$.

For arbitrary eigenvalue $\zeta\in \C$, one may construct the eigenfunction of $A_d v_d=\zeta v_d$ in the same way. For $d=2k+1$, take $k$ copies of $\phi$
and a factor $\rho^{x_{2k+1}}$, where
$\rho+\rho^{-1}=\zeta$. For $d=2k$, take $k-1$ copies of $\phi$, the
factor $\rho^{x_{2k-1}}$, and a constant final coordinate, where
$\rho+\rho^{-1}=\zeta-2$. Each quadratic equation
has a nonzero complex root. The same factor-by-factor calculation shows that $A_du=\zeta u$,
and the support closure has dimension $\lfloor d/2\rfloor+1$.

At eigenvalue zero there is one additional even-dimensional example. Let
$d=2k$, then
\begin{equation*}
 v_{2k}(x)=\prod_{j=1}^k\phi(x_{2j-1},x_{2j})
\end{equation*}
satisfies $A_{2k}v_{2k}=0$ and has support closure equal to a $k$-dimensional linear subspace of $\C^d$.
This attains the bound $\lceil d/2\rceil$ in
Theorem~\ref{thm-zariski}. In odd dimension the two dimension bounds in
that theorem coincide, and the preceding nonzero-eigenvalue construction
already supplies the required sharpness example.

\section{Proof of Theorem \ref{thm-quantitative} for \texorpdfstring{$d\ge 4$}{}}\label{sec:proof-any-d}

In this section, we give the exponent lower bound $\alpha_d=d^2/(2d-1)$ for $d\ge 3$ and hence prove Theorem \ref{thm-quantitative} for $d\ge 4$. We prove it over an arbitrary characteristic-zero field $\K$. For the complex-valued problem stated in the introduction, we take $\K=\C$.

\subsection{The translation algebra and its determinant norms}

Let $T_i$ denote the shift $(T_iu)(x)=u(x+e_i)$ for $i\in \lbrace 1,\cdots,d \rbrace$ and put
\begin{equation*}
    V_i:=T_i-T_{i}^{-1} \text{ and } W_i:=T_i+T_i^{-1}.
\end{equation*}
For simplicity, we denote by $\K[T,T^{-1}]:=\K[T_1^{\pm 1}, \cdots, T_d^{\pm 1}]$ the Laurent polynomial ring. A direct computation shows
\begin{equation*}
    T_i^{-1}= T_i-V_i \quad \text{and}\quad T_i^2-V_iT_i-1=0.
\end{equation*}
This implies that $\K[T,T^{-1}]$ is a free module of rank $2^d$ over $\K[V]:=\K[V_1,\cdots,V_d]$ with basis $\lbrace \Pi_iT_i^{\epsilon_i}:\epsilon\in \lbrace 0,1\rbrace^d\rbrace$. Let
\begin{equation*}
    P_\lambda:=\sum_{i=1}^d W_i-\lambda.
\end{equation*}
In particular, the harmonic operator is $P=P_{2d}$.

Let $m_{P_\lambda}$ be multiplication by $P_\lambda$ on the free module $\K[T,T^{-1}]$ over $\K[V]$. Define 
\begin{equation*}
    \mathcal{R}_{d,\lambda}(V):=\det (m_{P_\lambda})\in \K[V]
\end{equation*}
and we call it the \emph{determinant norm}.
For any $F\in \K[V]$, the \emph{top form of $F$}, is defined to be highest-degree homogeneous part, usually written as $\operatorname{top}(F)$.

\begin{lemma}\label{lem:norm-identities}
For every $\lambda\in \K$, the following statements hold.
\begin{enumerate}
    \item The polynomial $\mathcal{R}_{d,\lambda}$ is nonzero, and
    \begin{equation*}
        \mathcal{R}_{d,\lambda}\K[T,T^{-1}]
        \subseteq
        P_\lambda\K[T,T^{-1}].
    \end{equation*}

    \item After passing to a splitting field containing square roots
    $W_i^2=V_i^2+4$, one has
    \begin{equation*}
        \mathcal{R}_{d,\lambda}(V)
        =
        \prod_{\epsilon\in\{\pm1\}^d}
        \left(
            \sum_{i=1}^d \epsilon_i W_i-\lambda
        \right).
    \end{equation*}

    \item All the polynomials $\mathcal{R}_{d,\lambda}$ have the same highest
    homogeneous part. More precisely,
    \begin{equation*}
        R_{d,\infty}(V):=\operatorname{top}(\mathcal{R}_{d,\lambda})=\prod_{\epsilon\in\{\pm1\}^d}
        \left(
            \sum_{i=1}^d \epsilon_i V_i
        \right).
    \end{equation*}

    \item For the harmonic value $\lambda=2d$, on the formal branch
    satisfying $W_i(0)=2$, the lowest nonzero homogeneous part of
    $\mathcal{R}_d:=\mathcal{R}_{d,2d}$ at $V=0$ is a nonzero scalar multiple of $V_1^2+\cdots+V_d^2$.
\end{enumerate}
\end{lemma}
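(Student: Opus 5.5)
The plan is to reduce everything to the tensor-product structure of the extension $\K[V]\subset\K[T,T^{-1}]$. For each $i$, $\K[T_i^{\pm1}]=\K[V_i][T_i]/(T_i^2-V_iT_i-1)$ is free of rank $2$ over $\K[V_i]$. Hence $\K[T,T^{-1}]=\bigotimes_i \K[V_i][T_i]/(T_i^2-V_iT_i-1)$ over $\K[V]$.

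Multiplication by $W_i=T_i+T_i^{-1}=2T_i-V_i$ on the $i$-th factor has characteristic polynomial $X^2-(V_i^2+4)$, because its two roots are $\pm\sqrt{V_i^2+4}$. After adjoining $\omega_i$ with $\omega_i^2=V_i^2+4$ (the field $F=\K(V)(\omega_1,\dots,\omega_d)$), the operators $m_{W_i}$ commute and are simultaneously diagonalizable over $F$. They act on the tensor product with joint eigenvalues $(\epsilon_1\omega_1,\dots,\epsilon_d\omega_d)$, $\epsilon\in\{\pm1\}^d$. Therefore $m_{P_\lambda}=\sum_i m_{W_i}-\lambda$ has eigenvalues $\sum_i\epsilon_i\omega_i-\lambda$, and its determinant is the product in (2).

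For (1), Cayley--Hamilton applied to $m_{P_\lambda}$ gives $\chi(P_\lambda)=0$ with $\chi(X)=X^{2^d}+\dots+(-1)^{2^d}\det m_{P_\lambda}$. Since the determinant is, up to sign, the constant term, it lies in $P_\lambda\K[T,T^{-1}]$; equivalently, use the adjugate, $\operatorname{adj}(m_{P_\lambda})\,m_{P_\lambda}=\mathcal R_{d,\lambda} I$. Applying this to an arbitrary element yields $\mathcal R_{d,\lambda}g=P_\lambda\cdot(\text{adj applied to }g)$. For nonvanishing, it suffices to specialize: each factor $\sum\epsilon_i\omega_i-\lambda$ is a nonzero element of the field $F$, since its top-degree behavior is $\sum\epsilon_iV_i$ in the sense below, or more simply because $\omega_1$ is not in $\K(V)(\omega_2,\dots,\omega_d)$. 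So the product is nonzero in $F$, hence in $\K[V]$.

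For (3), I will work with the grading: introduce $V\mapsto sV$ and expand $\omega_i(sV)=sV_i\sqrt{1+4/(s^2V_i^2)}=sV_i+O(s^{-1})$ on the branch at infinity in the Puiseux/Laurent field in $s^{-1}$. Then each factor equals $s\sum\epsilon_iV_i+O(1)$. The product is $s^{2^d}\prod_\epsilon(\sum\epsilon_iV_i)+O(s^{2^d-1})$, because the product over all sign patterns is a genuine polynomial in $s$ of degree at most $2^d$. The leading coefficient $\prod_\epsilon(\sum\epsilon_iV_i)$ is independent of $\lambda$ and is nonzero, so it is the top form.

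For (4), I will expand instead near $V=0$ with $\omega_i=2+V_i^2/4+O(V^4)$ on the branch $\omega_i(0)=2$. The factor with $\epsilon=(1,\dots,1)$ is $\sum_i\omega_i-2d=\tfrac14\sum V_i^2+O(V^4)$. Every other factor has value $2\sum\epsilon_i-2d\neq0$ at $V=0$, so it is a unit. The product of the units contributes a nonzero constant, and the lowest homogeneous part is $c(V_1^2+\cdots+V_d^2)$ with $c\neq0$. The main subtlety I expect lies in (2)/(4): justifying that the formal power series expansion is legitimate, since $\mathcal R_d$ is a polynomial equal to the product in the formal power series ring $\K[[V]]$, where the $\omega_i$ exist as series because $4$ has square root $2$.
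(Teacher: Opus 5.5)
Your proposal is correct and follows essentially the same route as the paper. The joint eigenvalues $(\epsilon_1\omega_1,\dots,\epsilon_d\omega_d)$ of the commuting operators $m_{W_i}$ are exactly the paper's $\epsilon$-components $T_i\mapsto(V_i+\epsilon_i\omega_i)/2$, and your expansions along $V\mapsto sV$ and at $V=0$ match the paper's arguments for (3) and (4). The only cosmetic difference is in (1): the paper gets $\mathcal R_{d,\lambda}\neq 0$ directly from injectivity of multiplication by $P_\lambda\neq0$ in the domain $\K[T,T^{-1}]$, whereas you read it off from the factorization in (2).
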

\begin{proof}
    The Laurent algbebra is a domain and $P_\lambda\neq 0$, hence multiplication by $P_\lambda$ is injective and its determinant is nonzero. The adjugate identity
    \begin{equation*}
        \operatorname{adj}(m_{P_\lambda})m_{P_\lambda}=\mathcal{R}_{d,\lambda} I
    \end{equation*}
    gives the displayed containment. Hence (1) is proved.

    Now adjoin the square roots $W_i^2=V_i^2+4$. We write $\omega_i$ for a chosen square root of $V_i^2+4$ in a splitting field. 
    The quadratic equation for $T_i$ 
    \begin{equation*}
        X^2-V_iX-1=0
    \end{equation*}
    has roots
    \begin{equation*}
        t_{i,\pm}=\frac{V_i\pm\omega_i}{2}.
    \end{equation*}
      One may choose one root for every $i$ and the choices are indexed by $\epsilon=(\epsilon_1,\cdots,\epsilon_d)\in \{\pm 1\}^d$
    with 
    \begin{equation*}
        T_i\longmapsto \frac{V_i+\epsilon_i\omega_i}{2}.
    \end{equation*}
    Since $T_i^{-1}=T_i-V_i$, we obtain $W_i=T_i+T_i^{-1}=2T_i-V_i$. This implies that in the $\epsilon$-component, we have
    \begin{equation*}
        W_i\longmapsto \epsilon_i \omega_i.
    \end{equation*}
    Consequently
    \begin{equation*}
        P_\lambda\longmapsto p_\epsilon=\sum_{i=1}^d\epsilon_i \omega_i-\lambda.
    \end{equation*}
    This proves (2).

    Set $D=2^d$ and along the formal generic ray $V_i=tv_i$, then $\omega_i=tv_i+O(t^{-1})$. By part (2), we obtain
    \begin{equation*}
        \mathcal{R}_{d,\lambda}(t v)=t^D\prod_{\epsilon\in\{\pm1\}^d}\left(\sum_{i=1}^d\epsilon_i v_i\right)+O(t^{D-1}).
    \end{equation*}
    Then (3) follows directly from the above.

For (4), let $\mathfrak m=(V_1,\ldots,V_d)$ and work in
$\widehat A=K[[V_1,\ldots,V_d]]$, choosing
\begin{equation*}
    \omega_i=\sqrt{4+V_i^2}
    =2+\frac14V_i^2+O(V_i^4).
\end{equation*}
If $\epsilon$ has exactly $k$ minus signs, then the corresponding
factor
\begin{equation*}
    p_\epsilon=\sum_{i=1}^d\epsilon_i\omega_i-2d
\end{equation*}
satisfies $p_\epsilon(0)=-4k$. Hence only the all-plus factor vanishes,
while the product $U$ of all remaining factors is a unit with
\begin{equation*}
    U(0)=\prod_{k=1}^d(-4k)^{\binom dk}\neq0.
\end{equation*}
Since
\begin{equation*}
    p_+(V)
    =
    \frac14\sum_{i=1}^dV_i^2+O(\mathfrak m^4)
    \quad\text{and}\quad
    U(V)=U(0)+O(\mathfrak m^2),
\end{equation*}
part~\textup{(2)} gives
\begin{equation*}
    \mathcal{R}_{d,2d}(V)
    =
    \frac{U(0)}4
    \left(V_1^2+\cdots+V_d^2\right)
    +O(\mathfrak m^4).
\end{equation*}
Thus the lowest nonzero homogeneous part is a nonzero
scalar multiple of $V_1^2+\cdots+V_d^2$.    
\end{proof}

\subsection{The top-form ideal and Hilbert function}

Fix a finite set $E\subseteq\mathbb Z^d$, and let
\begin{equation*}
    I(E)
    :=
    \{f\in \K[x_1,\ldots,x_d]:f|_E=0\}
\end{equation*}
be its vanishing ideal. For $r\geq0$, define the \emph{homogeneous
top-form ideal}
\begin{equation}\label{eq:top-form-ideal}
    J_r(E)
    :=
    \bigl(
        \operatorname{top}(f):
        f\in I(E),\ \deg f\leq r
    \bigr).
\end{equation}

\begin{definition}\label{def-new-hilbert}
Denote by $S=\K[x_1,\ldots,x_d]$ and, for $t\geq0$, set
\begin{equation*}
    S_{\leq t}:=\{f\in S:\deg f\leq t\}.
\end{equation*}
If $J\subseteq S$ is a homogeneous ideal, set
$J_{\leq t}:=J\cap S_{\leq t}$. The \emph{Hilbert function of
$S/J$} is defined by
\begin{equation*}
    H_J(t)
    :=
    \dim_\K (S/J)_{\leq t}
    =
    \dim_\K\frac{S_{\leq t}}{J_{\leq t}}.
\end{equation*}
For a finite set $E\subseteq \K^d$, its evaluation Hilbert function is
defined by
\begin{equation*}
    H_E(t)
    :=
    \dim_K
    \left\{
        (q(x))_{x\in E}:
        q\in S,\ \deg q\leq t
    \right\}.
\end{equation*}
Equivalently,
\begin{equation*}
    H_E(t)
    =
    \dim_K\operatorname{im}
    \left(
        S_{\leq t}\longrightarrow K^E
    \right).
\end{equation*}
We also refer to $H_E(t)$ as the Hilbert rank of $E$ up to degree $t$.
\end{definition}

\begin{lemma}\label{lem:footprint}
If $0\leq t\leq r$, then
\begin{equation*}
    H_{J_r(E)}(t)=H_E(t)\leq |E|.
\end{equation*}
\end{lemma}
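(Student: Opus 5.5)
The plan is to identify both sides with the dimension of a single filtered quotient. The inequality $H_E(t)\le|E|$ is immediate: $H_E(t)$ is the rank of the evaluation map $S_{\le t}\to\K^E$, whose target has dimension $|E|$. For the equality, note that the kernel of $S_{\le t}\to\K^E$ is $I(E)_{\le t}:=I(E)\cap S_{\le t}$, so
\begin{equation*}
    H_E(t)=\dim_\K S_{\le t}-\dim_\K I(E)_{\le t}.
\end{equation*}
Similarly $H_{J_r(E)}(t)=\dim_\K S_{\le t}-\dim_\K J_r(E)_{\le t}$. It therefore suffices to prove
\begin{equation*}
    \dim_\K J_r(E)_{\le t}=\dim_\K I(E)_{\le t}\quad\text{for }0\le t\le r .
\end{equation*}

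First we describe $J_r(E)$ degree by degree. Let $G_s:=\{\operatorname{top}(f):f\in I(E),\ \deg f=s\}\cup\{0\}$. This is a subspace of the degree-$s$ forms, because $I(E)_{\le s}$ is a vector space and taking top forms of its elements of exact degree $s$ is additive modulo lower degree. We then claim that for $s\le r$ the degree-$s$ component of $J_r(E)$ equals $G_s$.

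The inclusion $\supseteq$ holds by definition. For $\subseteq$, a degree-$s$ element of $J_r(E)$ is a sum $\sum_k g_k\operatorname{top}(f_k)$ with homogeneous $g_k$ of degree $s-\deg f_k$. Each product satisfies $g_k\operatorname{top}(f_k)=\operatorname{top}(g_kf_k)$ with $g_kf_k\in I(E)$ of degree $s$. Hence every summand lies in $G_s$, and since $G_s$ is a subspace, so does the sum. The key point is that $I(E)$ is an ideal, so multiplying by $g_k$ preserves vanishing on $E$. This step is where $s\le r$ (hence $t\le r$) is needed: generators of degree $>r$ are excluded, but only generators of degree $\le s\le r$ can contribute in degree $s$, and all of those are allowed.

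Second, we compute $\dim I(E)_{\le t}$ via the degree filtration. The map
\begin{equation*}
    I(E)_{\le s}/I(E)_{\le s-1}\to G_s,\qquad f\mapsto\operatorname{top}(f),
\end{equation*}
is a linear isomorphism. It is well defined and injective because an element whose top degree-$s$ part vanishes already lies in $I(E)_{\le s-1}$, and it is surjective by the definition of $G_s$. Summing over $s=0,\dots,t$ gives
\begin{equation*}
    \dim I(E)_{\le t}=\sum_{s\le t}\dim G_s=\dim J_r(E)_{\le t},
\end{equation*}
which is the required equality.

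The only delicate step is the description of the degree-$s$ component of $J_r(E)$ as $G_s$, in particular the restriction $s\le r$. The rest is linear algebra, with no field-size or characteristic issues beyond $\K$ being a field.
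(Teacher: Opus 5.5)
Your proof is correct and follows the same route as the paper. In both, the degree-$s$ component of $J_r(E)$ for $s\le r$ is identified with the top forms of degree-$s$ elements of $I(E)$, and the dimensions are then compared through the total-degree filtration of $I(E)$. You spell out the ``filtered quotient identity'' that the paper only invokes, namely the isomorphisms $I(E)_{\le s}/I(E)_{\le s-1}\cong G_s$ together with $J_r(E)_{\le t}=\bigoplus_{s\le t}J_r(E)_s$, which holds because $J_r(E)$ is homogeneous.
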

\begin{proof}
Equip $\K[x_1,\cdots,x_d]$ with the total-degree filtration. For every $t\leq r$, the degree-$t$ component of $J_r(E)$ coincides
with the degree-$t$ component of the associated graded ideal of
$I(E)$. Indeed, if $f\in I(E)$ has degree $t$, then
$\operatorname{top}(f)$ is a generator of $J_r(E)$. Conversely, if
$q\in J_r(E)$ is homogeneous of degree $t$, write
\begin{equation*}
    q=\sum_j h_j\operatorname{top}(f_j),
\end{equation*}
where $f_j\in I(E)$ and the $h_j$ are homogeneous with
$\deg h_j+\deg f_j=t$. Then
\begin{equation*}
    g:=\sum_j h_jf_j\in I(E)_{\leq t}
\end{equation*}
has degree-$t$ homogeneous part $q$. Hence, if $q\neq0$, then
$\operatorname{top}(g)=q$.  The filtered quotient
identity now gives the equality.  The evaluation rank is at most $|E|$.
\end{proof}

The second use of the ideal $J_r(E)$ comes from commutators.  Let $M_f$
denote multiplication by $f(x)$.

\begin{lemma}\label{lem:commutator}
Suppose $Pu=0$ globally, $E=\supp(u)\cap Q_n$, and
$f\in I(E)$ has degree $m$.  Then
\begin{equation*}
        \operatorname{top}(f)(V_1,\ldots,V_d)u=0
        \quad\text{on }Q_{n-m}.
\end{equation*}
\end{lemma}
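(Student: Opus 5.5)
Since $f$ vanishes on $E=\supp(u)\cap Q_n$, we have $(fu)(x)=0$ for all $x\in Q_n$: either $x\in E$ and $f(x)=0$, or $u(x)=0$. The plan is to iterate commutators of $M_f$ with the translation operators until only the top form of $f$, evaluated at the $V_i$, survives. The key identity is the Leibniz-type formula
\[
    [T_i,M_g]=M_{\Delta_i^+g}\,T_i,\qquad [T_i^{-1},M_g]=-M_{\Delta_i^-g}\,T_i^{-1},
\]
where $\Delta_i^\pm$ are forward and backward differences. Hence $[V_i,M_g]=M_{\partial_i g}+(\text{lower-order terms})$, where the error consists of multiplication by polynomials of degree at most $\deg g-2$ composed with Laurent monomials in $T$, or terms of the shape $M_{\deg\le \deg g-1}$ times $W_i$. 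I would rather not track these errors in detail. Instead I will use a cleaner algebraic fact: for the operator $\mathrm{ad}_{V}^{\alpha}:=\mathrm{ad}_{V_1}^{\alpha_1}\cdots\mathrm{ad}_{V_d}^{\alpha_d}$ with $|\alpha|=m=\deg f$, the operator $\mathrm{ad}_V^\alpha(M_f)$ is a \emph{constant-coefficient} Laurent operator. Its leading part is
\[
    \sum_{\beta}\partial^\alpha f_{\mathrm{top}}\cdot(\text{products of }W_i)
\]
coming from the top-degree coefficients. Each commutator with $V_i$ lowers the polynomial degree of the coefficients by at least one, so after $m$ commutators all coefficients are constants. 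Moreover, each $\mathrm{ad}_{V_i}$ lowers the degree by exactly one with leading symbol $\partial_i$ times $W_i$, since $V_i$ has symbol $T_i-T_i^{-1}$ and the difference operators contribute $T_i+T_i^{-1}$ to the first order. Thus $\mathrm{ad}_V^\alpha(M_f)=\alpha!\,c_\alpha\,W^\alpha$ for the coefficient $c_\alpha$ of $x^\alpha$ in $\operatorname{top}(f)$.

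This alone does not directly give $\operatorname{top}(f)(V)u$, so I would instead use the dual expansion. Write $f(x)=\sum_\alpha c_\alpha x^\alpha+\dots$ and move multiplication operators through translations. The cleaner route is to choose the commutator with $P$, as the overview suggests. Since $Pu=0$, for any operator $B$ we have $[P,B]u=PBu$. Apply $\mathrm{ad}_P^{m}(M_f)u$: expanding $\mathrm{ad}_P^m(M_f)=\sum_{k}\binom mk(-1)^{k}P^{m-k}M_fP^k$ and using $P^ku=0$ for $k\ge1$ gives $\mathrm{ad}_P^m(M_f)u=P^mM_fu$. The right side involves only values of $fu$ within $\ell^\infty$-distance $m$ of $x$, so it vanishes on $Q_{n-m}$. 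On the left, since $P=\sum_iW_i-2d$ and $[W_i,M_g]=M_{\delta_i g}V_i+(\text{lower})$, where the central difference symbol gives $V_i$ with first-order leading term $\partial_i$, iterating $m$ times kills all non-constant coefficients. What remains is the constant-coefficient operator
\[
    \mathrm{ad}_P^m(M_f)=m!\sum_{|\alpha|=m}c_\alpha V^\alpha=m!\,\operatorname{top}(f)(V).
\]
Here it must be checked that all lower-degree corrections also die, i.e.\ that they contribute coefficients of degree $<0$ after $m$ steps. Since $m!\neq 0$ in characteristic zero, this yields $\operatorname{top}(f)(V)u=0$ on $Q_{n-m}$.

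The main obstacle is the exact bookkeeping in the commutator $[W_i,M_g]$. Using $T_i^{\pm1}M_g=M_{g(\cdot\pm e_i)}T_i^{\pm1}$, we get
\[
    [W_i,M_g]=M_{g(\cdot+e_i)-g}T_i+M_{g(\cdot-e_i)-g}T_i^{-1}.
\]
Writing the difference expansions, the first-order part is $M_{\partial_ig}(T_i-T_i^{-1})=M_{\partial_i g}V_i$, and the remainders are $M_{h}$ with $\deg h\le\deg g-2$ times $W_i$ or $V_i$. By induction, $\mathrm{ad}_P^k(M_f)=\sum M_{g_{k,\gamma}}\Phi_\gamma(T)$ with $\deg g_{k,\gamma}\le m-k$ and with top-degree part $\sum_{|\beta|=k}\frac{k!}{\beta!}M_{\partial^\beta f}V^\beta$ modulo terms of coefficient degree $\le m-k-1$. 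At $k=m$ the latter vanish, giving the claim; the multinomial count turns $\partial^\beta x^\alpha$ into $\alpha!$ and recovers $m!\operatorname{top}(f)(V)$.
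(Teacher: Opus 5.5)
Your argument is correct and is essentially the paper's proof: both use $\mathrm{ad}_P^m(M_f)u=P^m(fu)$, which vanishes on $Q_{n-m}$ because $fu=0$ on $Q_n$, together with the identity $\mathrm{ad}_P^m(M_f)=m!\operatorname{top}(f)(V)$. The only difference is how you verify that identity: you Taylor-expand $[W_i,M_g]$ and induct on coefficient degree, whereas the paper gets it at once from the Leibniz rule for the derivation $\mathrm{ad}_P$, since $[P,M_{x_i}]=V_i$ exactly and $[P,V_i]=0$, so no lower-order corrections ever arise (and your opening $\mathrm{ad}_V^\alpha$ detour can simply be dropped).
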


\begin{proof}
Let $D(A)=[P,A]$. Direct calculation gives
\[
    D(M_{x_i})=[P,M_{x_i}]=V_i
    \quad \text{and}\quad
    D(V_i)=0.
\]
Since $D$ is a derivation, repeated application of the Leibniz rule yields
\[
    D^m(M_f)=m!\operatorname{top}(f)(V_1,\ldots,V_d).
\]
On the other hand, $Pu=0$ and the iterated-commutator formula give
\[
    D^m(M_f)u=P^m(M_fu).
\]
Since $f\in I(E)$ and $E=\operatorname{supp}(u)\cap Q_n$, the function
$M_fu=fu$ vanishes on $Q_n^{(d)}$. Since $P$ has propagation radius one,
$P^m(M_fu)$ vanishes on $Q_{n-m}$. Therefore
\[
    m!\operatorname{top}(f)(V_1,\ldots,V_d)u=0
    \qquad\text{on }Q_{n-m},
\]
and division by $m!$ proves the result.
\end{proof}

\subsection{A finite position-translation uncertainty inequality}

For $q\in \K[T,T^{-1}]$, its \emph{Laurent radius} is
\[
    \operatorname{rad}_{L}(q)
    :=
    \max\bigl\{
        \lVert h\rVert_1:c_h\neq0
    \bigr\},
    \quad
    \lVert h\rVert_1
    :=
    |h_1|+\cdots+|h_d|.
\]
We set $\operatorname{rad}_{L}(0):=0$. For $b\geq0$, define
\[
    \K[T,T^{-1}]_{\leq b}
    :=
    \bigl\{
        q\in \K[T,T^{-1}]:
        \operatorname{rad}_{L}(q)\leq b
    \bigr\}=
    \operatorname{span}_K
    \bigl\{
        T^h:\lVert h\rVert_1\leq b
    \bigr\}.
\]

Fix a degree-compatible monomial order $\prec$ on
$\mathbb{K}[x_1,\ldots,x_d]$, meaning that
$\deg x^\alpha<\deg x^\beta$ implies $x^\alpha\prec x^\beta$.
For a nonzero polynomial $f$, let $\operatorname{LM}_\prec(f)$ denote
its largest monomial with respect to $\prec$. The \emph{initial ideal} of an
ideal $J\subseteq\mathbb{K}[x_1,\ldots,x_d]$ is the monomial ideal
\[
\operatorname{in}_\prec(J)
=
\bigl(\operatorname{LM}_\prec(f):0\neq f\in J\bigr).
\]
The monomials not contained in $\operatorname{in}_\prec(J)$ are called
the \emph{standard monomials} of $J$ with respect to $\prec$.

  For
$G_L=\{0,1,\ldots,L\}^d$, define
\begin{equation*}
 \mathcal{W}_{a,b}(G_L)=\operatorname{span}\left\{
   \left.p(x)q(T)u\right|_{G_L}:
   \deg p\le a,\ q\in \K[T,T^{-1}]_{\le b}\right\}.
\end{equation*}

\begin{theorem}\label{thm:uncertainty}
Let $u:\Z^d\to\K$ satisfy $Pu=0$ and $u(0)\ne0$, let
$E=\supp(u)\cap Q_n^{(d)}$, fix $1\le r<n$, and put
$J=J_r(E)$.  Let $n,L,a,b,r\in \N$ satisfy
\begin{equation}\label{eq:margin}
        a,b\ge dL,
        \quad L+b+r+d\le n.
\end{equation}
Then
\begin{equation}\label{eq:uncertainty}
 (L+1)^d
 \le \dim\mathcal{W}_{a,b}(G_L)
 \le 2^d H_J(a)H_{J+(R_{d,\infty})}(b).
\end{equation}
\end{theorem}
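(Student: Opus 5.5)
We prove the two inequalities in \eqref{eq:uncertainty} separately. The lower bound comes from an explicit family of functions that contains every point mass on $G_L$. The upper bound comes from reducing every generator of $\mathcal W_{a,b}(G_L)$ in two steps. First we reduce the position factor $p(x)$ modulo $I(E)$, which is possible because $u$ vanishes off $E$ inside $Q_n$. Then we reduce the translation factor $q(T)$ modulo the operators that annihilate $u$ near $G_L$.

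\emph{Lower bound.} Fix $y\in G_L$. Let $\ell_y(x)=\prod_{i=1}^d\prod_{0\le j\le L,\, j\neq y_i}\frac{x_i-j}{y_i-j}$ be the Lagrange polynomial of the grid, which has degree $dL\le a$. Consider $q=T^{-y}$, whose Laurent radius is $\lVert y\rVert_1\le dL\le b$. Then $\ell_y(x)(T^{-y}u)(x)=\ell_y(x)u(x-y)$, and restricted to $G_L$ this equals $u(0)\mathbf 1_{\{y\}}$. Since $u(0)\neq0$, these functions give every point mass on $G_L$. Hence $\mathcal W_{a,b}(G_L)=\K^{G_L}$, and so $\dim\mathcal W_{a,b}(G_L)\ge(L+1)^d$.

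\emph{Position reduction.} Let $\mathcal S_a$ be the standard monomials of $J$ of degree at most $a$. We need two facts.
\begin{itemize}
\item Every homogeneous element of $J$ is $\operatorname{top}(g)$ for some $g\in I(E)$; this is the argument in the proof of Lemma~\ref{lem:footprint}, which does not need $\deg\le r$ for this direction. Since the order $\prec$ is degree compatible, this gives $\operatorname{in}_\prec(J)\subseteq\operatorname{in}_\prec(I(E))$. Hence division shows that every polynomial of degree $\le a$ is congruent modulo $I(E)$ to a combination of elements of $\mathcal S_a$.
\item $\mathcal S_a$ is an order ideal: it is closed under divisibility.
\end{itemize}
Now take $x\in G_L$ and $\lVert h\rVert_1\le b$. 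The margin condition \eqref{eq:margin} gives $x+h\in Q_n$. Write $p(x)=\tilde p_h(x+h)$ with $\tilde p_h(y)=p(y-h)$. Reduce $\tilde p_h$ on $E$ to $\sum_{m\in\mathcal S_a}\gamma_{h,m}m(y)$; this identity is valid wherever $u(x+h)\neq0$. Expanding $m(x+h)$ in monomials of $x$ only produces divisors of $m$, which again lie in $\mathcal S_a$. This yields
\[
p(x)q(T)u\big|_{G_L}=\sum_{m\in\mathcal S_a} m(x)\,q_m(T)u\big|_{G_L},\qquad \operatorname{rad}_L(q_m)\le b .
\]
So the position factor contributes at most $|\mathcal S_a|=H_J(a)$.

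\emph{Translation reduction.} Using $T_i^{-1}=T_i-V_i$ and $T_i^2=V_iT_i+1$, we write any $q$ with $\operatorname{rad}_L(q)\le b$ as $\sum_{\epsilon\in\{0,1\}^d}a_\epsilon(V)e_\epsilon$ with $\deg a_\epsilon\le b$. We then show by downward induction on degree, with respect to a degree-compatible order on $\K[V]$, that modulo functions vanishing on $G_L$, each $a_\epsilon(V)e_\epsilon u$ is a combination of $s(V)e_\epsilon u$, where $s$ runs over standard monomials of $J+(R_{d,\infty})$ of degree $\le b$. Suppose the leading monomial of $a_\epsilon$ is nonstandard. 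Two cases arise.
\begin{itemize}
\item If it is divisible by the leading term of some $\operatorname{top}(f)$ with $f\in I(E)$, $\deg f\le r$, we subtract $h(V)\operatorname{top}(f)(V)$. By Lemma~\ref{lem:commutator}, $\operatorname{top}(f)(V)u=0$ on $Q_{n-r}$. Applying $h(V)e_\epsilon$, of radius $\le b+d$, keeps this vanishing on $G_L$ by \eqref{eq:margin}.
\item If it is divisible by the leading term of $R_{d,\infty}$, we subtract $s(V)\mathcal R_d(V)$ rather than $s(V)R_{d,\infty}$. By Lemma~\ref{lem:norm-identities}(1), $\mathcal R_d$ lies in $P\K[T,T^{-1}]$, so $s(V)\mathcal R_d(V)e_\epsilon u=0$ globally. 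The error $s(R_{d,\infty}-\mathcal R_d)$ has strictly lower degree, so the induction proceeds.
\end{itemize}
Combining with the position reduction, $\mathcal W_{a,b}(G_L)$ is spanned by $m(x)s(V)e_\epsilon u|_{G_L}$, which gives the bound $2^dH_J(a)H_{J+(R_{d,\infty})}(b)$.

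The main obstacle is bookkeeping in the last step. One must handle the mixing of the ideal $J$, which only kills $u$ locally, with the inhomogeneous global annihilator $\mathcal R_d$, and make the induction run over general elements of $J+(R_{d,\infty})$ rather than only generators. The planned fix is a Gröbner basis of $J+(R_{d,\infty})$ with respect to the degree-compatible order: write each basis element as $\sum h_j\operatorname{top}(f_j)+sR_{d,\infty}$, replace $R_{d,\infty}$ by $\mathcal R_d$, and absorb the resulting lower-degree error into the induction. Throughout, we check that all radii stay within $b+d$ and all degrees within $r$, so that \eqref{eq:margin} suffices.
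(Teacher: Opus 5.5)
Your proposal is correct and is essentially the paper's proof. It uses the Lagrange point masses $\ell_y(x)T^{-y}u$ for the lower bound. It reduces the position factor to $\mathcal S_a$ because $\operatorname{in}(J)$ lies inside the initial ideal of the relevant vanishing ideal: you reduce $p(y-h)$ modulo $I(E)$ and re-expand $m(x+h)$ using divisor-closure of $\mathcal S_a$, whereas the paper shows $\operatorname{in}(I_h)\supseteq\operatorname{in}(J)$ for the shifted ideals directly. Your ``planned fix'' for the translation step is exactly the paper's lift $\widetilde g=j_g+c_g\mathcal R_d$ of a Gr\"obner basis of $J+(R_{d,\infty})$, and it is needed, since your two-case sketch alone does not cover all of $\operatorname{in}(J+(R_{d,\infty}))$.

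One detail should be stated explicitly: the Gr\"obner basis and the decompositions $g=j_g+c_gR_{d,\infty}$ must be taken homogeneous. Only then do you get $\operatorname{top}(\widetilde g)=g$, so that division by the lifts uses the same leading monomials and never raises degree.
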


\begin{proof}
We separate the lower and upper estimates.

For $z\in G_L$, let $\ell_z$ be the multivariable Lagrange polynomial which is
one at $z$ and zero at the other points of $G_L$.  It has total degree
$dL$, while $T^{-z}$ has Laurent radius at most $dL$.  Hence
\begin{equation*}
        \left.\ell_z(x)T^{-z}u(x)\right|_{G_L}
        =u(0)\,\delta_z.
\end{equation*}
Since $u(0)\neq 0$, these $(L+1)^d$ vectors are independent.  This
proves the lower bound.

We now prove the upper bound by reducing separately the two factors in
$p(x)q(T)u$. First consider the position polynomial $p(x)$ with $\deg p \le a$. Choose a monomial order compatible with total degree, and denote by $\operatorname{in}(J)$ the initial ideal of $J$ simply. Set
\[
    \mathcal S_a
    :=
    \left\{
        x^\alpha:
        |\alpha|\leq a,\ 
        x^\alpha\notin\operatorname{in}(J)
    \right\}.
\]
The residue classes of the monomials in $\mathcal S_a$ form a basis of
$\K[x]_{\leq a}/J_{\leq a}$, and therefore
\begin{equation*}
    |\mathcal S_a|=H_J(a).
\end{equation*}
For a fixed
translation $\|h\|_1\le b$, let $I_h$ be the ideal generated by
\begin{equation*}
        f(x+h),\quad \forall f\in I(E)\text{ with }\deg f\le r.
\end{equation*}
Every homogeneous $g\in J$ is a homogeneous combination of top forms
$\operatorname{top}(f_i)$.  Replacing each top form by $f_i(x+h)$ produces an element
of $I_h$ with top form $g$.  Lifting a homogeneous Gröbner basis of $J$ (see, for example, \cite[\S 2.2]{EneHerzog2012})
therefore gives
\begin{equation*}
        \operatorname{in}(I_h)\supset\operatorname{in}(J).
\end{equation*}
Consequently the same set $\mathcal S_a$, independent of $h$, spans
every filtered quotient $\K[x]_{\le a}/(I_h)_{\le a}$.  For any $f\in I(E)$, we have
\begin{equation*}
        f(x+h)T^hu(x)=T^h(fu)(x)=0
\end{equation*}
on $G_L$. By the hypothesis and $\|h\|_1\le b$, we have $G_L+h\subset Q_n$. The position
coefficient at each Laurent monomial may therefore be
reduced to this common span.  Grouping equal standard monomials shows
that every generator $p(x)q(T)u$ of $\mathcal{W}_{a,b}(G_L)$ can be written as
\begin{equation}\label{eq:required}
    \sum_{m\in \mathcal{S}_a}m(x)q_m(T)u,\quad q_m\in \K [T,T^{-1}]_{\le b}.
\end{equation}
Hence at most $H_J(a)$ terms $m(x)q_m(T)u|_{G_L}$ are required.

Fix $m\in \mathcal{S}_a$. It remains to bound the span of functions $m(x)q(T)u,q\in \K[T,T^{-1}]_{\le b}$. Using the free $\K[V]$-basis $e_\eta:=\prod_{i}T_i^{\eta_i}, \eta=(\eta_i)\in \lbrace 0,1\rbrace^{d}$, we write
\begin{equation*}
    q(T)=\sum_{\eta\in \{0,1\}^d}a_\eta (V)e_\eta.
\end{equation*}
Using $T_i^{-1}=T_i-V_i$ and $T_i^2=V_iT_i+1$, reduction of a monomial $T^h$ produces coefficients of degree at most $\|h\|_1$. Hence $\deg a_\eta \le b$. Indeed, reduce the Laurent radius by $1$ lower will increase the degree of coefficient polynomial in $q(T)$ at most one.  This will ensure that all subsequent local relations remain inside $Q_n^{(d)}$.

Recall that $R_{d,\infty}=\operatorname{top}(\mathcal{R}_{d,\lambda})(V)$ is the highest-degree homogeneous part of $\mathcal{R}_{d,\lambda}$. Put $K_0=J+(R_{d,\infty})$. Choose a homogeneous Gröbner basis of
$K_0$ and denote it by $\mathcal{G}_0$.  Only basis elements of degree at most $b$ can occur in a
degree-$b$ division of $a_\eta$.  For each homogeneous element $g$ of $\mathcal{G}_0$, choose a
homogeneous decomposition
\begin{equation*}
        g=j_g+c_gR_{d,\infty},\qquad j_g\in J,
\end{equation*}
and replace it by the inhomogeneous lift
\begin{equation*}
        \widetilde g=j_g+c_g\mathcal R_d,
        \qquad \operatorname{top}(\widetilde g)=g.
\end{equation*}
Then the division of $a_\eta$ by the lifts $\widetilde g$ gives
\begin{equation*}
    a_\eta(V)=\sum_{g\in \mathcal{G}_0}q_{\eta,g}\widetilde{g}(V) +r_\eta(V).
\end{equation*}
Thus division by the lifts uses the same leading monomials as division
by the Gröbner basis $\mathcal G_0$. Consequently, $r_\eta$ is a linear
combination of standard monomials of $K_0$ of degree at most $b$, and
the division does not increase total degree. In particular, whenever
$q_{\eta,g}\neq0$,
\begin{equation*}
    \deg q_{\eta,g}+\deg g\le b.
\end{equation*}
Moreover,
\begin{equation*}
    q_{\eta,g}\widetilde g
    =
    q_{\eta,g}j_g
    +
    q_{\eta,g}c_g\mathcal R_d.
\end{equation*}
The first term is a $J(V)$-multiple of degree at most $b$, while the
second is an $\mathcal R_d$-multiple. The first annihilates
$u$ on $G_L$. Indeed, every $F(V)\in J(V)$ arising in the division, with
$\deg F\le b$, can be written as
\begin{equation*}
    F(V)=\sum_\nu h_\nu(V)\operatorname{top}(f_\nu)(V),
\quad
\deg h_\nu+\deg f_\nu\le b.
\end{equation*}
Lemma~\ref{lem:commutator} and
$\operatorname{rad}_L(h_\nu(V))\le\deg h_\nu$ imply that
$F(V)u=0$ on $Q_{n-b}$. Since
$\operatorname{rad}_L(e_\eta)\le d$, we have
$F(V)e_\eta u=0$ on $G_L$ by \eqref{eq:margin} and $F(V)e_\eta=e_\eta F(V)$.
Here the fixed free basis $\lbrace e_\eta\rbrace_{\eta\in \{0, 1\}^d}$ adds
at most $d$ to the Laurent radius.  The
second annihilates
globally since $\mathcal {R}_d\K[T,T^{-1}]\subset P\K[T,T^{-1}]$.
Thus for each fixed $m\in\mathcal \mathcal{S}_a$, the translation span has
dimension at most
\begin{equation*}
        2^dH_{J+(R_{d,\infty})}b.
\end{equation*}
Recalling that at most $H_J(a)$ terms are required in \eqref{eq:required} and summing the above bound over all $m\in\mathcal S_a$, we obtain
\[
    \dim\mathcal W_{a,b}(G_L)
    \le
    2^dH_J(a)H_{J+(R_{d,\infty})}(b),
\]
which proves the upper bound in \eqref{eq:uncertainty}.
\end{proof}

\begin{remark}
The two factors in \eqref{eq:uncertainty} involve the same ideal since both reductions
come from $f(x)u(x)=0$ on the observed support.
Arbitrary constant coefficient annihilators would supply the translation factor but not the shifted position quotients, and \eqref{eq:uncertainty} would then be
false.
\end{remark}

\subsection{The full-norm refinement}

Let
\[
        D_d=\deg\mathcal R_d=2^d,
        \qquad \mathcal R_d=\sum_{j=0}^{D_d}\rho_j,
\]
where $\rho_j$ is homogeneous of degree $j$, and homogenize by
\begin{equation}\label{eq:inset-z}
 \widetilde{\mathcal R}_d(V,z)
   =\sum_{j=0}^{D_d}\rho_j(V)z^{D_d-j}.
\end{equation}
For a homogeneous quotient $A=\K[V_1,\ldots,V_d]/J$, write
\[
 H_A(t)=\dim_\K A_{\le t},\qquad
 k_A(t)=\dim_\K(0:_{A[z]}\widetilde{\mathcal{R} }_d)_t,
\]
where
\begin{equation*}
    0:_{A[z]} \widetilde{\mathcal{R}}_d =\lbrace F\in A[z]: \widetilde{\mathcal{R}}_d F=0 \text{ in } A[z] \rbrace.
\end{equation*}
We set both quantities equal to zero at negative arguments.
Homogenization identifies $k_A(t)$ with the kernel of multiplication by
$\mathcal R_d$ on the filtered piece $A_{\le t}$.

\begin{theorem}
\label{thm:full-provenance}
Under the hypotheses and notation of Theorem~\ref{thm:uncertainty}, identify $\K[x]$ with $\K[V]$ by $x_i\mapsto V_i$ and put $A=\K[V]/J$. Then
\begin{equation}\label{eq:full-provenance}
 (L+1)^d\le 2^dH_A(a)
 \left[H_A(b)-H_A(b-D_d)+k_A(b-D_d)\right].
\end{equation}
\end{theorem}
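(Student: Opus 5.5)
The plan is to rerun the proof of Theorem~\ref{thm:uncertainty} word for word up to the translation step. The lower bound $(L+1)^d\le\dim\mathcal W_{a,b}(G_L)$ and the position reduction to the standard monomials $\mathcal S_a$ of $J$ are unchanged. Via $x_i\mapsto V_i$ these number $H_J(a)=H_A(a)$. Every generator of $\mathcal W_{a,b}(G_L)$ is then a sum $\sum_{m\in\mathcal S_a} m(x)q_m(T)u|_{G_L}$ with $q_m\in\K[T,T^{-1}]_{\le b}$. Writing each $q_m=\sum_\eta a_{m,\eta}(V)e_\eta$ in the free basis gives $\deg a_{m,\eta}\le b$, exactly as before. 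So it suffices to show that, for fixed $m$ and $\eta$, the space
\[
\{\, m(x)\,a(V)e_\eta u|_{G_L} : \deg a\le b\,\}
\]
has dimension at most $\nu_b:=H_A(b)-H_A(b-D_d)+k_A(b-D_d)$. Summing over the $H_A(a)$ choices of $m$ and the $2^d$ choices of $\eta$ then gives \eqref{eq:full-provenance}.

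For this, I would show that the linear map $a\mapsto m(x)a(V)e_\eta u|_{G_L}$ on $\K[V]_{\le b}$ kills the subspace
\[
N_b:=J_{\le b}+\mathcal R_d\,\K[V]_{\le b-D_d}.
\]
The first summand is handled as in the earlier proof. An element of $J_{\le b}$ is a sum of homogeneous pieces of $J$ of degree $\le b$, because $J$ is homogeneous. Each piece is $\sum_\nu h_\nu\operatorname{top}(f_\nu)$ with $\deg h_\nu+\deg f_\nu\le b$. By Lemma~\ref{lem:commutator} such an $F$ satisfies $F(V)u=0$ on $Q_{n-b}$, and the margin \eqref{eq:margin} then gives $F(V)e_\eta u=0$ on $G_L$. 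The second summand dies globally: by Lemma~\ref{lem:norm-identities}(1), $\mathcal R_d h(V)e_\eta\in P\,\K[T,T^{-1}]$ and $Pu=0$. Hence the dimension is at most $\dim \K[V]_{\le b}/N_b$. This quotient equals the cokernel of the multiplication map
\[
\mu:A_{\le b-D_d}\xrightarrow{\;\cdot\mathcal R_d\;}A_{\le b},
\]
which is well defined because $\deg\mathcal R_d=D_d$. Rank--nullity gives $\dim\operatorname{coker}\mu=H_A(b)-H_A(b-D_d)+\dim\ker\mu$. For $b<D_d$ the second term is zero and the bound is just $H_A(b)$, consistent with the conventions at negative arguments.

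The step needing the most care is identifying $\dim\ker\mu$ with $k_A(b-D_d)$. I would use the graded isomorphism $A_{\le t}\cong (A[z])_t$ given by $F\mapsto \sum_j F_j z^{t-j}$. It is an isomorphism because $A$ is graded, so $(A[z])_t=\bigoplus_{j\le t}A_jz^{t-j}$. Under it, multiplication by $\mathcal R_d$ from $A_{\le t}$ to $A_{\le t+D_d}$ becomes multiplication by $\widetilde{\mathcal R}_d$ from $(A[z])_t$ to $(A[z])_{t+D_d}$. This follows by checking the coefficient of $z^{t+D_d-k}$, which is the degree-$k$ component of $\mathcal R_dF$ in $A$. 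The kernels therefore correspond, and $\dim\ker\mu=k_A(b-D_d)$. I expect this homogenization bookkeeping, rather than the analytic annihilation, to be the only point requiring care: one must keep filtered pieces of the quotient $A$ (not of $\K[V]$) throughout, so that the local $J$-relations and the global $\mathcal R_d$-relation are quotiented simultaneously without double counting.
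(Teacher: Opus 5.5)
Your proposal is correct and follows the paper's proof essentially step for step. It uses the same position reduction to $\mathcal S_a$ and the same expansion in the free basis $e_\eta$. It also uses the same bound by $\dim A_{\le b}/\mathcal R_d A_{\le b-D_d}$, coming from the local $J$-relations (via Lemma~\ref{lem:commutator} and the margin) and the global adjugate relation, followed by rank--nullity. The only difference is that you write out the homogenization isomorphism $A_{\le t}\cong (A[z])_t$ identifying the kernel with $k_A(b-D_d)$, which the paper only asserts just before the theorem.
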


\begin{proof}
The position reduction and the multivariable Lagrange lower bound are exactly
those in Theorem~\ref{thm:uncertainty}.  For the translation reduction,
expand every Laurent word in the fixed free basis
$\prod_iT_i^{\epsilon_i}$ over $\K[V]$.  Modulo the local $J(V)$
relations, a coefficient of filtered degree at most $b$ lies in
$A_{\le b}$.  The adjugate relation
$\mathcal{R}_d \K[T,T^{-1}] \subset P \K[T,T^{-1}]$ shows that
$\mathcal{R}_dA_{\le b-D_d}$ kills $u$ on the observation box.  Hence the
coefficient space has dimension at most
\[
 \dim\left(\frac{A_{\le b}}{\mathcal R_dA_{\le b-D_d}}\right)
 =H_A(b)-H_A(b-D_d)+k_A(b-D_d).
\]
The free basis contributes $2^d$, the common position complement
contributes $H_A(a)$, and the displayed margin keeps every local relation
inside $Q_n^{(d)}$.  This proves \eqref{eq:full-provenance}.
\end{proof}

The point of retaining the full, inhomogeneous norm is that its exceptional
kernel can be bounded more sharply than an arbitrary top-form kernel.  The
needed estimate is the following equivariant boundary theorem. 

Before we start it, we first give some notions and definitions. Set
\begin{equation*}
    \mathscr{P}=\K[p_1,\ldots,p_d],
    \qquad
    \mathfrak{n}=(p_1,\ldots,p_d).
\end{equation*}
If $\mathfrak{n}^qM=0$ for some $q\geq1$, the
\emph{$\mathfrak{n}$-adic filtration} of $M$ is
\begin{equation*}
    M\supseteq\mathfrak{n}M
    \supseteq\mathfrak{n}^2M
    \supseteq\cdots
    \supseteq\mathfrak{n}^qM=0,
\end{equation*}
and its associated graded module is
\begin{equation*}
    \operatorname{gr}_{\mathfrak{n}}M
    :=
    \bigoplus_{j\geq0}
    \frac{\mathfrak{n}^jM}{\mathfrak{n}^{j+1}M}.
\end{equation*}
If
\begin{equation*}
    v\in\mathfrak{n}^jM
    \setminus\mathfrak{n}^{j+1}M,
\end{equation*}
its \emph{initial class} is
\begin{equation*}
    \operatorname{in}(v)
    :=
    v+\mathfrak{n}^{j+1}M
    \in
    \frac{\mathfrak{n}^jM}{\mathfrak{n}^{j+1}M}.
\end{equation*}
For a submodule $L\subseteq M$, its \emph{initial module}
$\operatorname{in}_{\mathfrak{n}}(L)$ is the graded submodule of
$\operatorname{gr}_{\mathfrak{n}}M$ generated by the initial classes
of the nonzero elements of $L$.

The induced filtration on $M/L$ gives
\begin{equation*}
    \operatorname{gr}_{\mathfrak{n}}(M/L)
    \cong
    \frac{
        \operatorname{gr}_{\mathfrak{n}}M
    }{
        \operatorname{in}_{\mathfrak{n}}(L)
    }.
\end{equation*}
Consequently, when $M$ is finite-dimensional over $\K$,
\begin{equation*}
    \dim_{\K}(M/L)
    =
    \dim_{\K}
    \frac{
        \operatorname{gr}_{\mathfrak{n}}M
    }{
        \operatorname{in}_{\mathfrak{n}}(L)
    }.
\end{equation*}

\begin{theorem}\label{thm:green-boundary}
Let $\mathscr{P}$ and $\mathfrak{n}$ be given above,
and let the one-dimensional torus act by
$p_i\mapsto s^{w_i}p_i$, where the positive weights $w_i$ are pairwise
distinct.  Suppose that $M$ is a finite-length equivariant
$\mathscr P$-module generated by at most $r$ elements.  Let $\Gamma\in \mathscr{P}$ be a polynomial with linear part $\ell=\sum_{i=1}^dc_ip_i$ where $c_i\neq 0$, that is,
\[
 \Gamma=\ell+h,\quad \text{for some }h\in \mathfrak{n}^2.
\]
then
\begin{equation}\label{eq:weighted-green-boundary}
 \dim_\K M/\Gamma M
 \le C_{d,r}\bigl(\dim_\K M\bigr)^{(d-1)/d}.
\end{equation}
The statement is unchanged if $\Gamma$ is a formal series, since a power
of $\mathfrak{n}$ annihilates $M$.
\end{theorem}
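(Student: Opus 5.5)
The plan is to reduce \eqref{eq:weighted-green-boundary} in two degenerations to a statement about a linear form acting on a standard graded module, and then to invoke Green's hyperplane restriction theorem. Throughout I use that $M$ has finite length, so $\dim_\K M/\Gamma M=\dim_\K\ker(\Gamma|_M)$, i.e.\ it equals the corank of the linear map $m_\Gamma:M\to M$. Coranks are upper semicontinuous in flat families: a degeneration can only increase the cokernel, so it suffices to bound the cokernel of the degenerate operator.

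\emph{Step 1 (reduce $\Gamma$ to its linear part).} Put on $M$ the $\mathfrak n$-adic filtration and form the Rees-type family $M_s$ over $\K[s]$, with $M_s\cong M$ for $s\neq0$ and $M_0=\operatorname{gr}_{\mathfrak n}M$. On it the operator $s^{-1}\Gamma(sp)$ specializes to $\ell$ at $s=0$. Since $h\in\mathfrak n^2$, this operator equals $\ell+s(\cdots)$, which gives
\[
\dim_\K M/\Gamma M\le \dim_\K N/\ell N,\qquad N:=\operatorname{gr}_{\mathfrak n}M .
\]
Equivalently, $\operatorname{in}_{\mathfrak n}(\Gamma M)\supseteq \ell\,N$, which I would verify directly from the displayed identity $\dim(M/L)=\dim(\operatorname{gr}M/\operatorname{in}L)$. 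The module $N$ is standard graded ($\deg p_i=1$), is generated by at most $r$ elements, has $\dim N=\dim M$, and, because the torus action commutes with the $\mathfrak n$-adic filtration, still carries the weight grading. So $N$ is bigraded by (standard degree, weight).

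\emph{Step 2 (Green's theorem for a generic form).} For a generic linear form $g$ and a standard graded module generated by $r$ elements, Green's hyperplane restriction theorem bounds $H_{N/gN}(t)$ by the Macaulay lower representation $H_N(t)_{\langle t\rangle}$, applied generator by generator with a loss depending only on $r$. Summing over $t$ and using the isoperimetric form of Macaulay's bound (the ``boundary'' of an order ideal of size $V$ in $d$ variables has size $\lesssim V^{(d-1)/d}$) gives
\[
\dim N/gN\le C_{d,r}(\dim N)^{(d-1)/d}.
\]

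\emph{Step 3 (from $\ell$ to a generic form), the main obstacle.} The form $\ell$ is special; only its coefficient support is known to be full. The torus acts on $N$, and $s\cdot\ell=\sum c_is^{w_i}p_i$ has the same corank as $\ell$, but the orbit of $\ell$ is only a curve. Moreover semicontinuity runs the wrong way for passing from $\ell$ to the generic form $g$. I would therefore not compare with a generic form directly. Instead I would exploit the bigrading. Choose a monomial order refining the weight grading and degenerate $N$ to a monomial module $N'=\operatorname{in}(N)$; since the order refines weight, this degeneration is compatible with the torus action. Then prove the Macaulay/Green boundary bound for the cokernel of $\ell$ on $N'$ combinatorially. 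There $\ell$ acts on a direct sum of at most $r$ shifted quotients by monomial ideals, and because every $c_i\neq0$, $\ell$ shifts each staircase in all $d$ directions. The cokernel of such an operator on a staircase of size $V$ is controlled by its outer boundary, which is $O(V^{(d-1)/d})$ by the Loomis--Whitney inequality.

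The delicate point is to ensure that the degeneration $N\rightsquigarrow N'$ does not decrease $\dim\ker\ell$, so that the bound for $N'$ transfers back to $N$. The distinct weights $w_i$ are what should make this work. Because the weights are distinct, the weight filtration separates the monomials $p_i$, so the initial term of $\ell$ acting on each weight-homogeneous piece is a single monomial shift. This should make the family upper triangular, and I expect the kernel dimension to be preserved under it. If that fails, the fallback is to run the Loomis--Whitney count directly on the weight-graded pieces of $N$, using the $d$ distinct weights as $d$ independent ``projection directions''.
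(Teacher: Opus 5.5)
Your Steps 1 and 2 match the paper. The paper also passes to $N=\operatorname{gr}_{\mathfrak n}M$ via $\operatorname{in}_{\mathfrak n}(\Gamma M)\supseteq \ell N$, filters $N$ by at most $r$ cyclic factors $B_j=\mathscr P/I_j$, and sums the Green/Macaulay estimate $q_n\le (h_n)_{\langle n\rangle}\le \frac{e-1}{n+e-1}h_n$. Before Step 1 the paper checks that $\mathfrak n^qM=0$, using that the support is finite and torus-stable and the weights are positive; you use this only implicitly.

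\textbf{The gap is Step 3.} That is where the content of the theorem lies, and the mechanism you propose does not work.
\begin{itemize}
\item \emph{The monomial degeneration moves the operator.} $N$ is only $\mathbb Z^2$-graded (standard degree and weight). So for $d\ge3$, degenerating it to a monomial module $N'$ needs a weight $\tau$ outside the span of $(1,\ldots,1)$ and $w$. In that flat family, the fixed operator $\ell$ on the special fibre corresponds on the general fibre to the twisted form $\sum_i c_is^{\tau_i}p_i$ acting on $N$. This is neither $\ell$ nor a torus translate of $\ell$. Semicontinuity therefore bounds the cokernel only for generic members of a new one-parameter family of forms. That is the same generic-versus-special problem you set out to avoid.
\item \emph{The ``upper triangular'' route gives nothing.} If you follow $\ell$ through the weight filtration, the associated graded operator is the single term $c_{i_0}p_{i_0}$, where $w_{i_0}$ is minimal. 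Degeneration then yields only $\dim N/\ell N\le \dim N/p_{i_0}N$. For $N=\mathscr P/\bigl((p_{i_0})+\mathfrak n^k\bigr)$ the right side equals $\dim N$, while $\dim N/\ell N$ is of lower order. So the bound is useless, and the kernel dimension is not preserved either.
\item \emph{The fallback is not an argument.} Using ``$d$ distinct weights as $d$ projection directions'' has no content, since the torus is one-dimensional and the weight grading is a single $\mathbb Z$-grading.
\end{itemize}

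\textbf{The missing idea is the paper's.}
\begin{itemize}
\item Choose the generators of $N_0$ as torus eigenvectors, so that each cyclic factor $B_j$ is itself equivariant.
\item Delete the variables lying in $(I_j)_1$; by distinctness of the weights this is a coordinate subspace.
\item Observe that the orbit $\{\ell_s=\sum_i c_is^{w_i}\bar p_i\}$ is an irreducible curve whose linear span is all of $(B_j)_1$. This uses that distinct characters $s^{w_i}$ are linearly independent and that every $c_i\neq0$.
\item Invoke Caviglia's refinement of Green's theorem: the forms violating Green's bound in a given degree lie in a \emph{finite union of proper linear subspaces}. A curve with full linear span cannot lie in such a set, so some $\ell_s$ satisfies Green's bound in that degree.
\item Equivariance of $B_j$ preserves the quotient Hilbert function, so $\ell=\ell_1$ satisfies the bound too, in each of the finitely many degrees.
\end{itemize}
What overcomes the fact that the orbit of $\ell$ is only a curve is this linear structure of the bad locus, not degeneration or semicontinuity. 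Your proposal never supplies a substitute for it.
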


\begin{proof}
Faithfully flat extension to $\overline\K$ preserves length and the number
of generators, so we may first assume that $\K$ is algebraically closed.
The support of $M$, denoted by
\begin{equation*}
    \supp_{\mathscr P} (M):=\lbrace \mathfrak{p}\in \operatorname{Spec}(\mathscr P): M_{\mathfrak{p}}\neq 0 \rbrace,
\end{equation*}
is then a finite torus-stable set, that is, it consists of finitely many points, and if
$a=(a_1,\ldots,a_d)\in\operatorname{Supp}(M)$, then
\[
    s\cdot a
    :=
    (s^{w_1}a_1,\ldots,s^{w_d}a_d)
    \in\operatorname{Supp}(M)
\]
for every $s\in\K^\times$.
Since the torus is
connected, each orbit in this finite set is a point. However, positive weights have
only the origin as a fixed point.  Thus $M$ is supported at $\mathfrak{n}$
and some power of $\mathfrak{n}$ annihilates it.

Give $M$ its ordinary $\mathfrak{n}$-adic filtration and put $N=\operatorname{gr}_{\mathfrak{n}}M$. The initial module of $\Gamma M$
contains $\ell N$. Indeed, if $\ell\operatorname{in}(m)\ne0$, it is exactly the
initial class of $\Gamma m$, while if it is zero there is nothing to
prove.  Consequently
\begin{equation}\label{eq:green-filtered-bound}
 \dim M/\Gamma M\le\dim N/\ell N.
\end{equation}
The degree-zero space $N_0$ has dimension at most $r$.  Choose a
torus-eigenbasis and adjoin its vectors one at a time.  This filters $N$
by at most $r$ cyclic equivariant quotients
$B_j=\mathscr P/I_j$, with $I_j$ homogeneous and torus-invariant.
For each short exact sequence
$0\to N_{j-1}\to N_j\to B_j\to0$, tensoring with
$\mathscr P/(\ell)$ gives a right-exact sequence; the preceding
$\operatorname{Tor}_1$ term can only shrink the first image.  Hence
\begin{equation}\label{eq:green-cyclic-decomposition}
 \dim N/\ell N\le\sum_j\dim B_j/\ell B_j,
 \qquad \sum_j\dim B_j=\dim M.
\end{equation}

It remains to establish the required bound for each cyclic factor.
Fix $j$, suppress the index, and write
\begin{equation*}
    B:=B_j=\mathscr P/I_j,
    \quad
    e:=\dim_{\K}B_1.
\end{equation*}
Since $(I_j)_1$ is torus invariant and the weights $w_i$ are pairwise
distinct, $(I_j)_1$ is spanned by a subset of the variables $p_i$.
After deleting those variables and relabeling, the nonzero classes
\begin{equation*}
    \overline p_1,\ldots,\overline p_e
\end{equation*}
form a basis of $B_1$.

If $e=0$, then $B=\K$. If $e=1$, then $B$ is a finite quotient of a
polynomial ring in one variable and $\ell$ is a nonzero multiple of
that variable. In either case,
\begin{equation*}
    \dim_{\K}B/\ell B=1,
\end{equation*}
so the required estimate is immediate. We may therefore assume
$e\geq2$.

For $s\in\K^\times$, set
\begin{equation*}
    \ell_s=\sum_{i=1}^e c_i s^{w_i}\overline p_i.
\end{equation*}
The Zariski closure of
$\{\ell_s:s\in\K^\times\}$ is an irreducible curve whose linear span is
$B_1$. Indeed, if a linear form vanishes on every $\ell_s$, then
\begin{equation*}
    \sum_{i=1}^e\alpha_i c_i s^{w_i}=0
    \quad\text{for every }s\in\K^\times.
\end{equation*}
The distinct characters $s^{w_i}$ are linearly independent, so every
$\alpha_i$ is zero. For a fixed degree $n$, Caviglia's
refinement of Green's hyperplane restriction theorem (see \cite{Green1989}) says that the bad
forms in that degree lie in a finite union of proper linear subspaces
\cite[Theorems 1.15]{Caviglia2022}.  Thus some $\ell_s$ on the curve is
Green-good in degree $n$.  Torus invariance carries $\ell_s$ to
$\ell=\ell_1$ and preserves the quotient Hilbert function.  Since $n$ was
arbitrary (and only finitely many degrees occur), this fixed form is
Green-good in every degree.

Write $h_n=\dim B_n$, $q_n=\dim(B/\ell B)_n$, and $H=\sum h_n$. The degree-zero case is $q_0=h_0=1$.  For $n\geq1$, write the
$n$th Macaulay expansion of an integer $h\geq0$ as
\begin{equation*}
 h=\binom{a_n}{n}+\binom{a_{n-1}}{n-1}+\cdots+\binom{a_j}{j},
 \qquad
 a_n>a_{n-1}>\cdots>a_j\geq j,
\end{equation*}
and define its Green lowering by
\begin{equation*}
 h_{\langle n\rangle}
 :=
 \binom{a_n-1}{n}
 +\binom{a_{n-1}-1}{n-1}
 +\cdots+
 \binom{a_j-1}{j},
 \qquad
 0_{\langle n\rangle}:=0.
\end{equation*}
Since $\ell$ is Green-good, Green's hyperplane restriction theorem gives
\begin{equation*}
 q_n\leq(h_n)_{\langle n\rangle}.
\end{equation*}
For $e\geq2$, apply the after-scaling form of Green's theorem to a
lex quotient with degree-$n$ Hilbert function $h$.  Since Green's
estimate is sharp for lex quotients, Theorem~2.7 and
Corollary~3.19 of \cite{Greco2015} give
\begin{equation*}
    \frac{h_{\langle n\rangle}}
         {\binom{n+e-2}{e-2}}
    \leq
    \frac{h}
         {\binom{n+e-1}{e-1}}.
\end{equation*}
Equivalently,
\begin{equation*}
    h_{\langle n\rangle}
    \leq
    \frac{e-1}{n+e-1}h,
    \quad \text{ for }
    0\leq h\leq\binom{n+e-1}{e-1}.
\end{equation*}
Therefore
\begin{equation}\label{eq:green-degree-bounds}
 q_n\le \frac{e-1}{n+e-1}h_n \quad \text{and}\quad
 q_n\le\binom{n+e-2}{e-2},
\end{equation}
where the second inequality only uses that $B/\ell B$ is generated by $e-1$
linear variables.  Splitting the sum at
$T=\max\{1,\lceil H^{1/e}\rceil\}$ yields
\begin{align*}
 \sum_nq_n&\le \sum_{n=0}^{T-1}\binom{n+e-2}{e-2}+\frac{e-1}{T+e-1}\sum_{n\ge T}h_n\le\binom{T+e-2}{e-1}+\frac{e-1}{T+e-1}H
 \le C_eH^{(e-1)/e}.
\end{align*}
For every cyclic factor, including the cases $e_j\leq1$ treated above,
we have
\begin{equation*}
    \dim_{\K}B_j/\ell B_j
    \leq
    C_d H_j^{(d-1)/d},
    \quad
    H_j:=\dim_{\K}B_j.
\end{equation*}
Since there are at most $r$ nonzero cyclic factors, concavity gives
\begin{equation*}
    \sum_jH_j^{(d-1)/d}
    \leq
    r^{1/d}
    \left(\sum_jH_j\right)^{(d-1)/d}.
\end{equation*}
Apply this in \eqref{eq:green-cyclic-decomposition}, and together with
\eqref{eq:green-filtered-bound}, proves
\eqref{eq:weighted-green-boundary}.
\end{proof}

\begin{proposition}
\label{prop:norm-defect}
For every homogeneous quotient $A=\K[V_1,\ldots,V_d]/J$ and every
$t\ge0$,
\begin{equation}\label{eq:norm-defect}
        k_A(t)\le C_dH_A(t)^{(d-1)/d}.
\end{equation}
\end{proposition}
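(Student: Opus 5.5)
We follow the route sketched in the overview. Put $\mathfrak m=(V_1,\ldots,V_d)$ and $A'=A/\mathfrak m^{t+1}A$; since $A$ is a homogeneous quotient, $\dim_\K A'=H_A(t)$.

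\textbf{Step 1: truncation and passage to $\K[z]$.} Let $\mathcal K_t=(0:_{A'[z]}\widetilde{\mathcal R}_d)$. The degree-$t$ part of $A[z]$ involves only $V$-degrees at most $t$, and so does its product with $\widetilde{\mathcal R}_d$ in degree $t+D_d$ modulo $\mathfrak m^{t+1}$. Hence the degree-$t$ annihilators in $A[z]$ inject into $(\mathcal K_t)_t$, which gives $k_A(t)\le\dim_\K(\mathcal K_t)_t$. The module $\mathcal K_t$ is a graded submodule of the free module $A'[z]$, so it is torsion-free and therefore free over the PID $\K[z]$. Because it is graded, $\dim(\mathcal K_t)_t\le\operatorname{rank}_{\K[z]}\mathcal K_t$: a homogeneous basis has at most one $z$-power of each generator in each degree.

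\textbf{Step 2: computing the rank after inverting $z$.} Invert $z$ and dehomogenize with $V_i\mapsto zV_i$. This identifies $\mathcal K_t\otimes\K(z)$ with the kernel of multiplication by $\mathcal R_d$ on the finite-dimensional $\K(z)$-algebra $A'_{\K(z)}$. By rank–nullity, its dimension equals $\dim_{\K(z)}A'_{\K(z)}/\mathcal R_dA'_{\K(z)}$. After base change this is just $\dim_\K A'/\mathcal R_dA'$ over the field $\K(z)$, which has characteristic zero. Rename that field $\K$.

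\textbf{Step 3: factoring the norm.} The ideal $\mathfrak m$ is nilpotent on $A'$, so formal power series in $V$ act on $A'$. Lemma~\ref{lem:norm-identities}(2),(4) gives $\mathcal R_d=U\Gamma$, where $U$ is a unit in $\K[[V]]$ and $\Gamma=\sum_j(\sqrt{4+V_j^2}-2)$. Each factor of $\mathcal R_d$ is invariant under signed permutations, and $\Gamma$ is a power series in the power sums $p_i=\sum_jV_j^{2i}$. Expanding $\sqrt{4+V^2}-2=\sum_{k\ge1}\beta_kV^{2k}$ with all $\beta_k\neq0$ shows $\Gamma=\sum_k\beta_kp_k$ exactly. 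The $p_k$ with $k>d$ are polynomials in $p_1,\ldots,p_d$ lying in $\mathfrak n^2$, by Newton's identities; this is the nontrivial check. Thus $\Gamma=\sum_{i\le d}a_ip_i+O(\mathfrak n^2)$ with $a_i=\beta_i\neq0$. It follows that $\dim A'/\mathcal R_dA'=\dim A'/\Gamma A'$.

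\textbf{Step 4: applying the weighted Green theorem.} Regard $A'$ as a module over $\mathscr P=\K[p_1,\ldots,p_d]$. Since $\K[V]$ is free of rank $2^dd!$ over $\mathscr P$, $A'$ is generated by at most $r=2^dd!$ elements. It is finite length and equivariant under the dilation torus, with $p_i$ of weight $2i$, pairwise distinct, because $J$ and $\mathfrak m^{t+1}$ are homogeneous. The formal-series clause of Theorem~\ref{thm:green-boundary} then gives
\[
\dim A'/\Gamma A'\le C_{d,r}(\dim A')^{(d-1)/d}=C_dH_A(t)^{(d-1)/d}.
\]

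\textbf{Main obstacle.} The delicate step is Step 1 together with the identification in Step 2. One must check carefully that the truncation $\mathfrak m^{t+1}$ does not destroy degree-$t$ annihilators, and that the graded rank of $\mathcal K_t$ really dominates $\dim(\mathcal K_t)_t$. If a direct argument for the latter fails, the fallback is a homogeneous basis of $\mathcal K_t$ whose generators all have degree at most $t$.
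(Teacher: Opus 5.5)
Your proposal is correct and follows the paper's proof essentially step for step. Both arguments truncate to $A'=A/\mathfrak m^{t+1}A$, use freeness of the annihilator over $\K[z]$ to bound $k_A(t)$ by its rank, and apply the change $V=zW$ to identify that rank with $\dim A'/\mathcal R_dA'$. They then use the unit factorization $\mathcal R_d=U\Gamma$ and apply Theorem~\ref{thm:green-boundary} over $\mathscr P=\K[p_1,\ldots,p_d]$ with weights $2,4,\ldots,2d$ and $2^dd!$ generators. Your explicit check that $\Gamma=\sum_k\beta_kp_k$ with $\beta_k\neq0$ and $p_k\in\mathfrak n^2$ for $k>d$ fills in what the paper only asserts. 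One small inaccuracy: not every factor of $\mathcal R_d$ is signed-permutation invariant, since coordinate permutations permute the mixed-sign factors. You never use this, because $U$ only needs to be a unit on $A'$.
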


\begin{proof}
Let
\begin{equation*}
    \mathfrak{m}=(V_1,\ldots,V_d)
\end{equation*}
and set
\begin{equation*}
    A':=A/\mathfrak{m}^{t+1}A,
    \quad
    \mathcal K:=
    \left(0:_{A'[z]}\widetilde{\mathcal R}_d\right).
\end{equation*}
The quotient map $A\to A'$ is an isomorphism through degree $t$.
It therefore does not change the degree-$t$ source of multiplication by
$\widetilde{\mathcal R}_d$, although it may introduce additional kernel
elements. Hence
\begin{equation*}
    k_A(t)
    \leq
    \dim_{\K}\mathcal K_t.
\end{equation*}
The module $A'[z]$ is finite free over $\K[z]$. Its graded submodule
$\mathcal K$ is torsion free and hence free over the principal ideal
domain $\K[z]$. Since its homogeneous generators have nonnegative
degrees,
\begin{equation*}
    \dim_{\K}\mathcal K_t
    \leq
    \operatorname{rank}_{\K[z]}\mathcal K.
\end{equation*}
Consequently,
\begin{equation*}
    k_A(t)
    \leq
    \dim_{\K}\mathcal K_t
    \leq
    \operatorname{rank}_{\K[z]}\mathcal K.
\end{equation*}

Set
\[
 p_i=\sum_{j=1}^dV_j^{2i},\qquad 1\le i\le d.
\]
Consider the signed-permutation group, denoted by $W=(\mathbb Z/2\mathbb Z)^d\rtimes S_d$, which changes signs and
permutes the variables $V_i$. Since $p_1,\ldots,p_d$ are basic invariants for the signed-permutation
group, $\mathbb K[V]$ is free of rank $2^d d!$ over
$\mathbb K[p_1,\ldots,p_d]$. Hence $A'$ is generated by at most
$2^d d!$ elements over this ring.
Scalar dilation acts with the distinct
weights $2,4,\ldots,2d$, so $A'$ satisfies the equivariance and bounded
generator hypotheses of Theorem~\ref{thm:green-boundary}.  In the completed
local ring, all conjugate factors of the norm except the harmonic one are
units, while the remaining factor is
\begin{equation}\label{eq:norm-linearization}
 \sum_{j=1}^d\sqrt{4+V_j^2}-2d
   =\sum_{i=1}^da_ip_i+O((p_1,\ldots,p_d)^2),
 \qquad a_i\ne0
\end{equation}
Here we absorb the higher power sums into the error term by Newton identities.
The homogeneous ideal defining $A'$ is preserved by the invertible change
$V=zW$.  Over $\K(z)$ we therefore have the exact chain
\begin{equation}\label{eq:norm-rank-cokernel}
 \operatorname{rank}_{\K[z]}(0:\widetilde{\mathcal R}_d)
 =\dim_{\K(z)}\ker(\mathcal R_d:A'_{\K(z)}\to A'_{\K(z)})
 =\dim_{\K(z)}A'_{\K(z)}/\mathcal R_dA'_{\K(z)}.
\end{equation}
Since $(V_1,\ldots,V_d)^{t+1}A'=0$, each formal square root
\begin{equation*}
    \omega_i=\sqrt{4+V_i^2}
    =2+\frac14V_i^2-\frac1{64}V_i^4+\cdots
\end{equation*}
truncates to a well-defined element of $A'$. The norm factorization
therefore holds directly in $A'$. Every factor except
\begin{equation*}
    \Gamma=\sum_{i=1}^d\omega_i-2d
\end{equation*}
has nonzero constant term and is consequently a unit. Hence
$\mathcal R_d=U\Gamma$ for some unit $U$, so multiplication by
$\mathcal R_d$ and multiplication by $\Gamma$ have the same kernel and
cokernel. Thus Theorem~\ref{thm:green-boundary}
bounds \eqref{eq:norm-rank-cokernel} by
\begin{equation*}
    C_d(\dim A')^{(d-1)/d}=C_dH_A(t)^{(d-1)/d}.
\end{equation*}
This proves~\eqref{eq:norm-defect}.
\end{proof}

\subsection{Proof of the uniform lower bound}

\begin{proof}[Proof of Theorem~\ref{thm-quantitative}]
It is enough first to consider $n$ above a dimensional constant.  Put
\[
 R=\left\lfloor\frac{n}{4}\right\rfloor,
 \qquad r=R,
 \qquad J=J_R(E),
 \qquad H(t)=H_J(t),
\]
where $E=\supp(u)\cap Q_n$.  For every $t$ in a fixed 
interval $t_d\le t\le R$ for some positive $t_d$ depending only on $d$, apply
Theorem~\ref{thm:full-provenance} with
\[
 a=b=t,\qquad L_t=\left\lfloor\frac{t}{d}\right\rfloor.
\]
All margin conditions \eqref{eq:margin} hold after enlarging $t_d$.  Since $L_t+1\ge t/d$,
\eqref{eq:full-provenance}
and Proposition~\ref{prop:norm-defect} give
\begin{equation*}
 t^d\le C_d\left[
 H(t)\bigl(H(t)-H(t-D_d)\bigr)
       +H(t)^{(2d-1)/d}\right].
\end{equation*}
Since
\[
 H(t)\bigl(H(t)-H(t-D_d)\bigr)
 \le H(t)^2-H(t-D_d)^2,
\]
we have
\begin{equation*}
    t^d\le C_d\left[
 H(t)^2-H(t-D_d)^2
       +H(t)^{(2d-1)/d}\right].
\end{equation*}
Sum it through $R$, separately in $D_d$ residue classes, we obtain
\begin{equation*}
 R^{d+1}\le C_d\left[
       H(R)^2+R H(R)^{(2d-1)/d}\right].
\end{equation*}
Whichever term on the right dominates, we obtain
\[
 H(R)\ge c_dR^{\min\{(d+1)/2,\,d^2/(2d-1)\}}
       =c_dR^{d^2/(2d-1)}.
\]
Finally,
Lemma~\ref{lem:footprint} gives $H(R)\le|E|$.  For the finitely many
smaller radii use $|E|\ge1$ and decrease $c_d$.  The resulting constant
depends only on $d$ and works for arbitrary $n\ge1$.
\end{proof}

\section{Sharp quadratic lower bound for \texorpdfstring{$d=3$}{}}\label{sec:proof-sharp-3}

Our aim is to prove the remaining case $d=3$ of
Theorem~\ref{thm-quantitative}. Several auxiliary results below hold
for $d\geq2$ and will also be used in Section~\ref{sec:zariski}. By
Remark~\ref{rmk:equivalence}, it suffices in the quantitative argument
to consider a real-valued harmonic function $u$ with $u(0)\neq0$.

\subsection{Moment relations}
Let $d\ge 2$ and $u:\Z^d\to \R$ be real harmonic with $u(0)\neq 0$. Define the ball with $\ell^1$ radius $R$ as
\begin{equation*}
    B_R=\lbrace x\in \Z^d: \|x\|_1\le R\rbrace.
\end{equation*}

Using the shifts $T_i$ introduced in Section~\ref{sec:proof-any-d}, write
\begin{equation*}
    T^{-y}= T_1^{-y_1}\cdots T_d^{-y_d}
\end{equation*}
and let $[T^{-y}]F(T)$ denote the coefficient of the monomial $T^{-y}$ in $F$.

For $R\ge 0$ and $y\in \Z^d$, define
\begin{equation}\label{eqn:coefficient-notation}
    c_R(y):=[T^{-y}]P(T)^R.
\end{equation}

The following lemma gives a degree lowering property for polynomial multiples of a harmonic function.

\begin{lemma}\label{lem:lower}
    Let $q$ be a polynomial in $\C[x_1,\cdots,x_d]$ of total degree $m$. Then
    \begin{equation*}
        P^{m+1}(qu)=0.
    \end{equation*}
\end{lemma}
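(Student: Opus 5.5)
The plan is to run the commutator argument of Lemma~\ref{lem:commutator}, but without the support hypothesis. Write $M_q$ for multiplication by $q$ and set $D(A)=[P,A]$. Two facts from that proof are reused directly: $D(M_{x_i})=V_i$ and $D(V_i)=0$.

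\emph{Step 1: $D^{m+1}(M_q)=0$.} Since $D$ is a derivation and the $V_i$ are constant-coefficient operators commuting with every $M_{x_j}$ up to the computed commutator, repeated Leibniz expansion shows the following. For a monomial $x^\alpha$, the operator $D^k(M_{x^\alpha})$ is a sum of products in which exactly $k$ of the factors $M_{x_i}$ have been replaced by $V_i$. One needs the order convention $P M_f - M_f P$ to match the sign in $D(M_{x_i})=V_i$; this was already fixed in the paper.

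Concretely, this follows by induction on $\deg q$. Use $D(M_{x_i}M_g)=V_iM_g+M_{x_i}D(M_g)$ together with $D^k(V_iM_g)=V_iD^k(M_g)$, the latter because $D(V_i)=0$. Then
\begin{equation*}
    D^{k}(M_{x_i}M_g)=M_{x_i}D^k(M_g)+kV_iD^{k-1}(M_g).
\end{equation*}
With $\deg g=m-1$, the inductive hypothesis gives $D^m(M_g)=0$. Taking $k=m+1$ then yields $D^{m+1}(M_{x_i}g)=0$. Linearity in $q$ finishes this step.

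\emph{Step 2: the iterated-commutator identity.} For any operator $A$ and any $u$ with $Pu=0$,
\begin{equation*}
    D^k(A)u=P^k(Au),
\end{equation*}
by induction on $k$. Indeed, $D^{k}(A)u=P\,D^{k-1}(A)u-D^{k-1}(A)Pu$, and the second term vanishes because $Pu=0$. Applying this with $A=M_q$ and $k=m+1$ gives
\begin{equation*}
    P^{m+1}(qu)=D^{m+1}(M_q)u=0.
\end{equation*}

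All operators here are finite-order difference operators acting on arbitrary functions on $\Z^d$, so no growth or summability issue arises. The only delicate point is the bookkeeping in Step 1. In particular, one must not accidentally use $D(V_iM_g)=V_iD(M_g)+D(V_i)M_g$ with a nonzero $D(V_i)$; here $D(V_i)=0$ holds because $P$ and $V_i$ are both constant-coefficient, so they commute.
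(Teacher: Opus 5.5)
Your proof is correct and follows the paper's argument. The paper shows that $D=[P,\cdot\,]$ annihilates $M_q$ after $m+1$ applications, using $D(M_{x_i})=V_i$ and $D(V_i)=0$, and then applies the iterated-commutator identity $D^{k}(A)u=P^{k}(Au)$ for $Pu=0$. Your formula $D^{k}(M_{x_i}M_g)=M_{x_i}D^{k}(M_g)+kV_iD^{k-1}(M_g)$ and the induction on degree spell out the degree-lowering step that the paper states in one line.
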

\begin{proof}
Let $D(A)=[P,A]$. As in the proof of
Lemma~\ref{lem:commutator}, each application of $D$ lowers the degree
of the polynomial multiplier by one. Hence
\begin{equation*}
    D^{m+1}(M_q)=0.
\end{equation*}
Since $Pu=0$, the iterated-commutator identity gives
\begin{equation*}
    P^{m+1}(qu)=D^{m+1}(M_q)u=0.
\end{equation*}
\end{proof}

\subsection{Independent centered relations}
We reformulate a form of the Cayley--Bacharach property, see~\cite[Definition~2.1]{GKR} or~\cite[\S~1.2]{EGH} for a discussion about its history.
\begin{definition}
    Let $Z\subset \C^d$ be a finite set and let $s\ge 0$ be an integer. We say that $Z$ satisfies $CB(s)$ if for every $z\in Z$ and every polynomial $q\in \C[x_1,\cdots,x_d]$ of degree at most $s$, one always has
    \begin{equation*}
         q|_{Z\setminus\{z\}}=0 \quad\Longrightarrow\quad q(z)=0.
    \end{equation*}
\end{definition}

Fix $a\in \Z^d, R\ge 1$, and set
\begin{equation*}
    E(a,R):=\supp (u)\cap (a+B_R).
\end{equation*}
The following lemma shows that $E(a,R)$ satisfies $CB(R-1)$.
\begin{lemma}\label{lem:cb}
    There are $w_x\neq 0$ for any $x\in E(a,R)$ such that for evey polynomial with degree smaller than $R$, we have
    \begin{equation}\label{eq:moment-relation}
        \sum_{x\in E(a,R)}w_xq(x)=0.
    \end{equation}
    Consequently $E(a,R)$ satisfies $CB(R-1)$.
\end{lemma}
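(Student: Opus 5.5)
The weights will be $w_x:=c_R(a-x)\,u(x)$, where $c_R$ is the coefficient function of \eqref{eqn:coefficient-notation}. The relation \eqref{eq:moment-relation} will come from evaluating the identity of Lemma~\ref{lem:lower} at the center $a$. The non-vanishing of the weights will come from a parity argument on the coefficients of $P(T)^R$.

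\emph{The relation.} Let $q$ have degree $m\le R-1$. Lemma~\ref{lem:lower} gives $P^{m+1}(qu)=0$, and hence $P^R(qu)=0$ on all of $\Z^d$. Write $P(T)^R=\sum_y c_R(y)T^{-y}$. Since $(T^{-y}f)(a)=f(a-y)$, we have
\begin{equation*}
    0=\bigl(P^R(qu)\bigr)(a)=\sum_{y}c_R(y)\,q(a-y)\,u(a-y)=\sum_{x}c_R(a-x)\,u(x)\,q(x).
\end{equation*}
Every Laurent monomial of $P^R$ has $\ell^1$-degree at most $R$, so $c_R(a-x)=0$ unless $x\in a+B_R$. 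Also $u(x)=0$ off $\supp(u)$. Hence the sum runs over $E(a,R)$, which is exactly \eqref{eq:moment-relation} with $w_x=c_R(a-x)u(x)$.

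\emph{Non-vanishing of $c_R(y)$ for $\|y\|_1\le R$.} This is the only real step. Write $P=A_d-2d$, so
\begin{equation*}
    c_R(y)=\sum_{k=0}^R\binom Rk(-2d)^{R-k}\,[T^{-y}]A_d^k.
\end{equation*}
The coefficient $[T^{-y}]A_d^k$ counts nearest-neighbour walks of length $k$ from $0$ to $-y$. This count is a nonnegative integer. It vanishes unless $k\ge\|y\|_1$ and $k\equiv\|y\|_1 \pmod 2$, and it is strictly positive when $k=\|y\|_1$.

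Consequently, every nonzero term satisfies $R-k\equiv R-\|y\|_1\pmod 2$. All nonzero terms therefore carry the same sign $(-1)^{R-\|y\|_1}$, so no cancellation can occur. The term $k=\|y\|_1\le R$ is nonzero, so $c_R(y)\neq0$. Since $u(x)\neq0$ for $x\in E(a,R)$, every weight $w_x$ is nonzero.

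\emph{Cayley--Bacharach.} Let $\deg q\le R-1$ and suppose $q$ vanishes on $E(a,R)\setminus\{z\}$. Then \eqref{eq:moment-relation} reduces to $w_zq(z)=0$. Since $w_z\ne0$, we get $q(z)=0$, so $E(a,R)$ satisfies $CB(R-1)$.
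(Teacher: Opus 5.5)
Your proof is correct and follows the paper's argument essentially step for step. You take the weights $w_x=c_R(a-x)u(x)$ from evaluating $P^R(qu)=0$ (Lemma~\ref{lem:lower}) at the center $a$, and you get $c_R(y)\neq0$ for $\|y\|_1\le R$ from the binomial expansion in $A_d$: the parity constraint on walk counts gives all nonzero terms the common sign $(-1)^{R-\|y\|_1}$, and the shortest-walk term is nonzero. You are also slightly more explicit than the paper about passing from $P^{m+1}(qu)=0$ to $P^R(qu)=0$ and about the final Cayley--Bacharach deduction.
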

\begin{proof}
    By Lemma~\ref{lem:lower}, we obtain
    \begin{equation*}
        0=(P^R(qu))(a)=\sum_{x\in a+B_R}c_R(a-x)u(x)q(x),
    \end{equation*}
    where the last identity used a translation $y\mapsto a-y$. 

    For $y\in B_R$, expanding the definition \eqref{eqn:coefficient-notation} by the explicit form of $P$ gives
    \begin{equation*}
        c_R(y)=\sum_{j=0}^R\binom Rj(-2d)^{R-j}[T^{-y}]A(T)^j\quad \text{with } A(T)=\sum_{i=1}^d(T_i+T_i^{-1}).
    \end{equation*}
    The last coefficient counts the number of ways from $0$ to $-y$ with $j$ steps. Whenever it is nonzero, we always have $j\equiv \|y\|_1 (\mod 2)$. Hence all nonzero summands have the same sign $(-1)^{R-\|y\|_1}$. For $j = \|y\|_1\le R$, there must exists a shortest path, therefore the term $c_R(y)\neq 0$. This implies $w_x=c_R(a-x)u(x)\neq 0$ since $x\in E(a,R)$. This proves the fully nonvanishing coefficients in \eqref{eq:moment-relation}. Then $E(a,R)$ satisfies $CB(R-1)$ since there cannot be only one nonzero term survive in \eqref{eq:moment-relation}.
\end{proof}

The following lemma shows that the relations \eqref{eq:moment-relation} are linearly independent. We use the evaluation Hilbert function $H_E(s)$ from Definition~\ref{def-new-hilbert} with $\K=\C$.
\begin{lemma}\label{lem:gram}
    Let $D$ be a positive even integer and set
$s:=D/2$. Let $\Omega\subset \Z^d$ be finite and set $E=\supp(u)\cap\Omega$. Assume that a set $F\subset E$ has $a+B_D$ inside $\Omega$ for every $a\in F$. For any $a\in F$ and $x\in E$, define
    \begin{equation*}
        L_a(x)=c_D(a-x)u(x).
    \end{equation*}
    Then the vectors $L_a\in \R^{E}$, $a\in F$, are linearly independent degree-$D-1$ evaluation relations. Hence
    \begin{equation}\label{eq:rank-nulity}
        H_E(D-1)\le |E|-|F|.
    \end{equation}
\end{lemma}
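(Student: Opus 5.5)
We need three facts. First, each $L_a$ is a relation of degree $D-1$, meaning $\sum_{x\in E}L_a(x)q(x)=0$ for every $q$ with $\deg q\le D-1$. Second, the vectors $L_a$, $a\in F$, are linearly independent. Third, rank–nullity turns these into the bound \eqref{eq:rank-nulity}.

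\emph{The relations.} Lemma~\ref{lem:cb} with $R=D$, applied at the center $a$, gives
\[
\sum_{x\in a+B_D}c_D(a-x)u(x)q(x)=0 .
\]
Only points of $\supp(u)$ contribute, and $a+B_D\subset\Omega$ by hypothesis. So the sum runs over $E\cap(a+B_D)$. Outside $a+B_D$ we have $c_D(a-x)=0$, since $P^D$ has Laurent radius $D$. Hence the sum equals $\sum_{x\in E}L_a(x)q(x)$.

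Consequently each $L_a$ lies in the annihilator of the image of $S_{\le D-1}\to\C^E$. That annihilator has dimension $|E|-H_E(D-1)$. Once the $L_a$ are known to be independent, we get $|F|\le |E|-H_E(D-1)$, which is \eqref{eq:rank-nulity}.

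\emph{Independence.} Suppose $\sum_{a\in F}\beta_aL_a=0$. Dividing by $u(x)\neq0$ on $E$ gives
\[
\sum_a\beta_a c_D(a-x)=0\quad\text{for all } x\in E\supseteq F.
\]
Restricting to $x\in F$, the system becomes $G\beta=0$ with $G=(c_D(a-b))_{a,b\in F}$. Since $D=2s$, we factor
\[
c_D(a-b)=[T^{b-a}]P^{2s}=\sum_{y}[T^{y-a}]P^s\,[T^{b-y}]P^s .
\]
This uses that $P$ is symmetric, $c_s(y)=c_s(-y)$, so the product is a genuine convolution. Thus $G=MM^{\top}$ with $M_{a,y}=c_s(a-y)$, indexed by $a\in F$ and $y\in\Z^d$. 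Because $c_s$ is real, $G$ is a real positive semidefinite Gram matrix. This is exactly where $D$ even is used.

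\emph{Main obstacle.} Positive semidefiniteness is not enough; we need $G$ positive definite, that is, the rows $c_s(a-\cdot)$ for distinct $a\in F$ must be linearly independent. These rows are translates of the finitely supported kernel $c_s$.

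I would prove this by extremal points. Pick a linear functional $\phi$ that is generic, so it separates points of $F$ and points of $B_s$. Let $a^*$ be the $\phi$-maximal element of $F$ with $\beta_{a^*}\ne0$, and let $y^*$ be the $\phi$-maximal point of $B_s$, a vertex direction $\pm s e_i$. By Lemma~\ref{lem:cb}, $c_s(y^*)\ne0$. The point $a^*+y^*$ is then reached by no other $a$ with $\beta_a\neq0$, so the $a^*$-row coefficient $\beta_{a^*}c_s(y^*)$ at that position cannot cancel. This contradicts $\beta^{\top}M=0$.

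If we run this argument directly on $\sum_a\beta_aL_a=0$, it would need the point $a^*+y^*$ to lie in $E$, which is not guaranteed. The Gram factorization avoids this: $\beta^{\top}G\beta=\|\beta^{\top}M\|^2=0$ forces $\beta^{\top}M=0$ as a vector over all of $\Z^d$, and only then do we apply the extremal-point argument. Hence $\beta=0$, and \eqref{eq:rank-nulity} follows.
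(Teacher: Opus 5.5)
Your proposal is correct and follows the paper's proof. The relations come from Lemma~\ref{lem:cb} at radius $D$, and independence comes from restricting $\sum_a\beta_a c_D(a-x)=0$ to $F$ and using the Gram factorization $c_D(a-b)=\sum_y c_s(a-y)c_s(b-y)$, which needs $D=2s$. The only difference is the final nondegeneracy step: the paper notes that $\beta^{\top}M=0$ means $P(T)^s\sum_a\beta_aT^a=0$ and appeals to the Laurent polynomial ring being a domain, whereas your extremal-vertex argument proves that fact directly in this case.
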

\begin{proof}
We prove linear independence by contradiction. Assume linear dependence relation
\begin{equation}\label{eq:linear-relation}
    \sum_{a\in F}\alpha_a L_a=0
\end{equation}
for some $\alpha_a$ being nonzero.
    Set $p_s(z)=[T^z]P(T)^s$. Symmetry gives
    \begin{equation*}
        c_D(a-b)=\sum_{z}p_s(z-a)p_s(z-b).
    \end{equation*}
    This Gram matrix $G=(c_D(a-b))_{a,b\in F}$ is strictly positive definite. Indeed, zero Gram norm would give
    \begin{equation*}
         \sum_{a,b\in F}\alpha_a\alpha_b c_D(a-b)
 =\sum_z\left(\sum_{a\in F}\alpha_ap_s(z-a)\right)^2=0
    \end{equation*}
    and therefore
    \begin{equation*}
        P(T)^s\sum_{a\in F}\alpha_aT^a=\left(\sum_{w\in \Z^d}p_s(w)T^w\right)\left(\sum_{a\in F}\alpha_aT^a\right)
 =
 \sum_{z\in\Z^d}
 \left(\sum_{a\in F}\alpha_ap_s(z-a)\right)T^z=0.
    \end{equation*}
    It is impossible for nonzero $(\alpha_a)$ since it is in a Laurent polynomial domain. Divide \eqref{eq:linear-relation} by $u(x)\neq 0$ on $E$ shows that $g(x)=\sum_{a\in F}\alpha_a c_D(a-x)$ vanishes on $E$, hence on $F$. Thus the Gram matrix kills $\alpha$ and makes a contradiction. This proves linear independence.
    
    By definition, the coefficient $c_D(a-x)$ vanishes unless $x\in a+B_D$. Since $a+B_D\subset \Omega$, Lemma~\ref{lem:cb} gives 
    \begin{equation*}
        \sum_{x\in E}L_a(x)q(x)=0, \quad \forall \deg q \le D-1.
    \end{equation*}
    Thus we obain the desired rank inequality~\eqref{eq:rank-nulity}.
\end{proof}

\begin{corollary}\label{crc:gram}
    If $r\ge D$ and $E=\supp (u)\cap B_r$, then
    \begin{equation*}
        H_{E}(D-1)\le |\supp(u)\cap (B_r\setminus B_{r-D})|.
    \end{equation*}
\end{corollary}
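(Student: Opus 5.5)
This is a direct application of Lemma~\ref{lem:gram} with $\Omega=B_r$ and $F=\supp(u)\cap B_{r-D}$. The only issue is the parity hypothesis, since Lemma~\ref{lem:gram} asks for $D$ even. If $D$ is even, we check the containment hypothesis. For $a\in F$ we have $\|a\|_1\le r-D$, so the triangle inequality for $\|\cdot\|_1$ gives $a+B_D\subset B_r=\Omega$. Clearly $F\subset E=\supp(u)\cap B_r$. Lemma~\ref{lem:gram} then yields
\begin{equation*}
    H_E(D-1)\le |E|-|F|=|\supp(u)\cap B_r|-|\supp(u)\cap B_{r-D}|=|\supp(u)\cap(B_r\setminus B_{r-D})|,
\end{equation*}
because $B_{r-D}\subset B_r$. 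This is the claim.

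If $D$ is odd, we would apply the even case with $D-1$ in place of $D$ (when $D\ge 2$, so that $D-1\ge1$ is even and $r\ge D-1$). Since $H_E$ is nondecreasing in its argument, this gives
\begin{equation*}
    H_E(D-1)\le H_E\ldots
\end{equation*}
but that inequality points the wrong way: the even case controls only $H_E(D-2)$, and we need $H_E(D-1)$. So the odd case cannot be reduced to Lemma~\ref{lem:gram} as stated. Instead we would redo that lemma's argument with $L_a(x)=c_D(a-x)u(x)$ for arbitrary $D$.

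\begin{itemize}
\item Each $L_a$ is a degree-$(D-1)$ relation by Lemma~\ref{lem:cb} applied with $R=D$ at the center $a$. This step needs no parity assumption.
\item Linear independence is the only place Lemma~\ref{lem:gram} used $D$ even, through the Gram factorization with $s=D/2$. For odd $D$ we would look for a triangularity argument instead. Order the points of $F$ lexicographically. Taking $x$ to be an extreme point of $F$ in a generic linear direction, $c_D(a-x)\neq0$ requires $\|a-x\|_1\le D$, which alone does not isolate a single term. So this route is not obviously available.
\end{itemize}

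The simplest safe course is therefore to read the corollary with $D$ a positive even integer, as in Lemma~\ref{lem:gram}. This is harmless for the later application, where $D$ is chosen comparable to $M/n$ and may be taken even. The main step is then just the inclusion $a+B_D\subset B_r$ for $\|a\|_1\le r-D$, and the count of $|E|-|F|$ as the number of support points in the shell $B_r\setminus B_{r-D}$.
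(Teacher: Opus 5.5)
Your proposal is correct and matches the paper's proof. The paper's proof is also the one-line application of Lemma~\ref{lem:gram} with $\Omega=B_r$ and $F=\supp(u)\cap B_{r-D}$. In the paper, too, $D$ is implicitly assumed even, inherited from that lemma, and the only application uses $D=2\lceil 5000m\rceil$. So restricting to even $D$ is exactly the intended reading, and your digression on odd $D$ is not needed.
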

\begin{proof}
    Use $F=\supp(u)\cap B_{r-D}$ in Lemma~\ref{lem:gram}.
\end{proof}

\subsection{Flat Hilbert growth yields a full curve}

\begin{lemma}\label{lem:carrier}
    Let $m\ge 1$ be a real number, let $D$ be even with 
    \begin{equation*}
        10^4 m \le D\le 10^4 m +2.
    \end{equation*}
    Suppose finite nonempty set $Z\subset \C^d$ has $CB(t)$ for some $t\ge D$ and
    \begin{equation*}
        H_Z(D-1)\le 20 Dm.
    \end{equation*}
    Then $Z$ lies on a reduced curve of degree at most $160m$.
\end{lemma}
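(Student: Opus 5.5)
The plan is to combine Macaulay's bound on Hilbert function growth with Gotzmann persistence to find a low-degree curve through $Z$, and then use the Cayley--Bacharach property $CB(t)$ with $t\ge D$ to rule out isolated points and lower-dimensional debris off that curve.

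\textbf{Step 1: locate a slowly growing degree.} Write $h(s)=H_Z(s)$. Since $h(D-1)\le 20Dm$ and $h$ is nondecreasing, pigeonhole over $s\in[D/2,D-1]$ gives a degree $s$ with $h(s+1)-h(s)\le 40m\cdot(\text{const})$. More precisely, the number of $s$ with $\Delta h(s)>80m$ is less than $D/4$, so we can choose $s\ge D/2\approx 5000m$ with $\Delta h(s)\le 80m$.

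\textbf{Step 2: Macaulay and Gotzmann.} The first difference $\Delta h$ is the Hilbert function of a standard graded artinian reduction (after a generic linear change, of $S/(I(Z)^{\mathrm{hom}},\ell)$). Since $\Delta h(s)\le 80m<s$, the Macaulay representation of $\Delta h(s)$ is $\binom{s}{s}+\binom{s-1}{s-1}+\cdots$ with at most $80m$ terms. Macaulay's theorem then gives $\Delta h(s+1)\le \Delta h(s)$. I then apply the standard ``maximal growth'' analysis (Bigatti--Geramita--Migliore style, or Gotzmann persistence applied to the ideal truncated in degrees $\le s$). Because $s\gg \Delta h(s)$, the degree-$\le s$ part of $I(Z)$ defines a scheme $Y$ whose one-dimensional part is a curve $C$ of degree at most $\Delta h(s)\le 80m$, plus a zero-dimensional part. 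I will need some slack here: the statement allows degree $160m$, so it suffices to lose a factor $2$, for instance from choosing $s$ less carefully. If the saturation or the embedded components cause trouble, I pass to the reduced curve $C_{\mathrm{red}}$, whose degree is no larger.

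\textbf{Step 3: no points of $Z$ lie off $C$.} Suppose $z\in Z\setminus C$. Choose a form of degree $\le 160m$ vanishing on $C$ but not at $z$; this exists, for instance by taking generic cones or projections over each component. Separately, the points of $Z\setminus C$ number at most about $h(s)$, a quantity controlled by the zero-dimensional part. Multiplying by a polynomial of degree $<t-160m$ that vanishes on $(Z\setminus C)\setminus\{z\}$ but not at $z$ produces a polynomial of degree $\le t$ that vanishes on $Z\setminus\{z\}$ and not at $z$. This contradicts $CB(t)$.

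The crux, and the hardest step, is producing that second factor with degree below $D-160m$. A finite set $Y_0$ of points off the curve can be separated by polynomials of degree $\le |Y_0|-1$, but $|Y_0|$ need not be small. To fix this, I will instead use the Hilbert function bound directly: the residual set's Hilbert function in degrees around $s$ is at most $h(s)-(\text{contribution of }C)$, and the flat growth shows that the zero-dimensional part is already separated in degree $s\le D-1-160m$. Concretely, I pick $s$ in $[D/4,D/2]$ so that $s+160m\le t$, and argue that $Y_0$ imposes independent conditions in degree $s$ modulo $I(C)$. If that fails, I will replace the argument by a Davis-type decomposition: flat $\Delta h$ in degree $s$ equal to $c$ forces $Z$ to contain a subset on a curve of degree $c$ whose complement is separated in degree $s$.
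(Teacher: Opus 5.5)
There is a genuine gap. Your outline (Macaulay monotonicity, then a Bigatti--Geramita--Migliore/Gotzmann structure theorem, then $CB(t)$ to exclude points off the curve) points in the right direction, but you never produce a degree $s$ of \emph{maximal growth}, i.e.\ $\Delta h(s)=\Delta h(s+1)=e\le s$. Macaulay alone gives only $\Delta h(s+1)\le\Delta h(s)$. The structure Step~3 needs is that $\langle (I_Z)_{\le s}\rangle$ is the saturated, $s$-regular ideal of a reduced curve plus isolated points, so that each isolated point can be separated from the rest in degree $s$. Gotzmann and Bigatti--Geramita--Migliore deliver this only under that equality.

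The trouble is not the degree of the one-dimensional part but the points off it, and without flatness your Step~3 claim can be false. Take $m=1$, $D=10^4$, and $Z=\{1,\dots,5\}\times\{1,\dots,D-1\}$ in a coordinate plane, a complete intersection of type $(5,D-1)$. It satisfies $CB(D+1)$ and $H_Z(D-1)<5D$. Your Step~1 permits $s=D-1$, where $\Delta h(s)=4$ and $\Delta h(s+1)=3$. But $(I_Z)_{\le D-1}$ contains $x_3,\dots,x_d$, $\prod_{i=1}^{5}(x_1-i)$ and $\prod_{j=1}^{D-1}(x_2-j)$, so it cuts out $Z$ itself. The curve $C$ is then empty and every point of $Z$ lies off it. Your Step~3 fallbacks (independence of $Y_0$ modulo $I(C)$, a Davis-type decomposition) are not proved, and the second presupposes exactly the flat $\Delta h$ you have not found.

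The missing idea is to use $CB(t)$ twice.

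\emph{First use: positivity.} Since $t\ge D$, $CB(t)$ forces $H_Z(D)<|Z|$; otherwise evaluation in degree $\le D$ would be onto $\C^Z$ and would separate any point. Hence $\Delta h(j)>0$ for all $j\le D$, because $\Delta h$, once zero, stays zero.

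\emph{Finding a flat step.} Pick $j_0\in[\lceil (D-1)/3\rceil,\lfloor 2(D-1)/3\rfloor]$ with $\Delta h(j_0)\le 160m<j_0$, by averaging $H_Z(D-1)\le 20Dm$ over at least $(D-1)/4$ indices. From $j_0$ on, $\Delta h$ is nonincreasing, positive and at most $160m$, and more than $160m$ steps remain before $D$. So some step is flat: $\Delta h(s)=\Delta h(s+1)=e\le 160m<s<D$. Your window $[D/2,D-1]$ does not leave room for this count.

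\emph{Second use: separating isolated points.} Bigatti--Geramita--Migliore now applies, and $s$-regularity gives $H^1(\mathcal I_V(s))=0$, hence $H^0(\mathcal O_{\mathbb P^d}(s))\twoheadrightarrow H^0(\mathcal O_V(s))$. Let $z$ be an isolated point of the reduced scheme $V$. Lifting $L^s$ times the idempotent of $z$ gives a polynomial of degree $\le s$ that is nonzero at $z$ and vanishes on $Z\setminus\{z\}$. This contradicts $CB(t)$ because $s<t$.

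This is the paper's argument. It needs neither a cone through $C$ nor a bound on $|Z\setminus C|$, and it yields $\deg C=e\le 160m$ directly.
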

\begin{proof}
    Let $h_j=H_Z(j)-H_Z(j-1)$ and set $N=D-1$. 
    
    First we claim that there exists some $j_0\in \left[\lceil N/3\rceil, \lfloor 2N/3\rfloor\right]$ such that
    \begin{equation*}
        h_{j_0}\le 160m<j_0.
    \end{equation*}
    Indeed, the interval has at least $N/4$ terms and $\sum_{j\le N}h_j\le 20 Dm$ with $D/N<2$, which gives $h_{j_0}\le 160m$. 

    Macaulay's growth inequality \cite[Chapter~4]{Bruns_Herzog} makes $h_j$ nonincreasing for $j\ge j_0$: if $h_j=a\le j$, its Maculay sucessor is again $a$. Therefore
    \begin{equation*}
        h_{j+1}\le h_{j}, \forall j\ge j_0.
    \end{equation*}
    The conditions $CB(t)$ and $t\ge D$ implies
    \begin{equation*}
        H_Z(D)<|Z|.
    \end{equation*}
     This implies $h_j$ is positive through $D$. Since $D-j_0\ge D/3 -1>160m$, there are more than $160m$ transitions after $j_0$. Thus some $j_0\le s<D$ satisfies
     \begin{equation*}
         h_s=h_{s+1}=e>0,\quad e\le 160m<s.
     \end{equation*}

     The theorem of Bigatti--Geramita--Migliore \cite{BGM}, see also \cite[Theorem~5.3(a)]{Migliore}, supplies $V\supset Z$. Let $C$ be the union of the one-dimensional components of $V$. If $z\in Z\setminus C$, then reducedness makes $\lbrace z\rbrace$ an open-and-closed component of $V$. Its idempotent in $H^0(V,\mathcal{O}_V)$ multiplied by $L^s$ for a linear form with $L(z)\neq 0$, is a section of $\mathcal{O}_V(s)$ which is nonzero at $z$ and zero on every other component. Since $\mathcal{I}_V$ is $s$-regular, we have $H^1(\mathbb{P}^d,\mathcal{I}_V(s))=0$, and therefore the surjection
     \begin{equation*}
         H^0(\mathbb{P}^d,\mathcal{O}(s))\twoheadrightarrow H^0(V,\mathcal{O}_V(s)).
     \end{equation*}
     Then the section therefore lifts to a degree-$s$ ambient form. This form separates $z$ from $Z\setminus \lbrace z\rbrace$, which is contrrary to $CB(t)$ since $s<D\le t$. Hence $Z\subset C$ and $\deg C=e\le 160m$.
\end{proof}

\subsection{No low degree curve carriers}

\begin{lemma}\label{lem:interp}
Let $C\subset\mathbb A^d_{\C}$ be an irreducible affine curve of degree
$e$, let
$\varnothing\ne Z\subset C$ be finite and nonempty, and let $q_0\ge 0$. Suppose there are coefficients
$c_z\ne0$ for every $z\in Z$ such that
\begin{equation*}
 \sum_{z\in Z}c_zq(z)=0
 \quad \forall q\in \C[x_1,\cdots,x_d] \text{ with }\deg q\le q_0.
\end{equation*}
Then
\begin{equation}\label{eq:curve-interpolation}
 |Z|\ge e(q_0-e+3).
\end{equation}
\end{lemma}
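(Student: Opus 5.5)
The plan is to reduce to a plane curve and then read \eqref{eq:curve-interpolation} as a Riemann--Roch independence count. Note that $e(q_0-e+3)=eq_0-e(e-3)$, and $e(e-3)=2p_a-2$ for a plane curve of degree $e$. So the statement is exactly the classical bound: a set of fewer than $\deg\mathcal O_C(q_0)-(2p_a-2)$ points imposes independent conditions on degree-$q_0$ forms. The full-support relation is precisely a failure of independence (it is the $CB(q_0)$ situation of Lemma~\ref{lem:cb}), so we argue by contradiction: assume $|Z|<e(q_0-e+3)$.

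\emph{Step 1: the case $e=1$.} Here $C$ is a line and $Z$ is a set of collinear points. Restricting polynomials of degree $\le q_0$ to the line gives all univariate polynomials of degree $\le q_0$. These separate any $\le q_0+1$ points, so a relation with all $c_z\ne0$ forces $|Z|\ge q_0+2=e(q_0-e+3)$.

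\emph{Step 2: reduction to the plane.} For $e\ge2$, take a generic linear projection $\pi:\mathbb A^d\to\mathbb A^2$. It maps $C$ birationally onto an irreducible plane curve $C'$ of degree $e$ and is injective on the finite set $Z$. For any $q'$ of degree $\le q_0$ on $\mathbb A^2$, the pullback $q'\circ\pi$ has degree $\le q_0$. Hence $Z'=\pi(Z)$ carries the relation $\sum c_z q'(\pi z)=0$, still with all coefficients nonzero, and $|Z'|=|Z|$. So we may assume $d=2$ and that $C$ is cut out by an irreducible $f$ of degree $e$. We then pass to the projective closure $\overline C\subset\mathbb P^2$.

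\emph{Step 3: the independence count.} Since $\overline C$ is a plane curve, $H^1(\mathbb P^2,\mathcal I_{\overline C}(q_0))=H^1(\mathcal O_{\mathbb P^2}(q_0-e))=0$. Therefore degree-$q_0$ forms surject onto $H^0(\overline C,\mathcal O_{\overline C}(q_0))$, and independence of $Z$ for polynomials reduces to independence on $\overline C$. Now $\overline C$ is Gorenstein with dualizing sheaf $\omega=\mathcal O_{\overline C}(e-3)$. From the sequence $0\to\mathcal I_Z(q_0)\to\mathcal O_{\overline C}(q_0)\to\mathcal O_Z\to0$, the points impose independent conditions iff $H^1(\mathcal I_Z(q_0))\to H^1(\mathcal O_{\overline C}(q_0))$ is injective. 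Since $H^1(\mathcal O_{\overline C}(q_0))\cong H^0(\mathcal O(e-3-q_0))^\vee$, which vanishes when $q_0\ge e-2$, it suffices to show $H^1(\mathcal I_Z(q_0))=0$. By Serre duality this equals $\operatorname{Hom}(\mathcal I_Z(q_0),\omega)^\vee$. On the irreducible curve $\overline C$, a nonzero such homomorphism gives a generalized divisor of degree $e(e-3)-eq_0+|Z|$ with a nonzero section. That degree is negative under our assumption, which is impossible. So $Z$ imposes independent conditions, and we can choose $q$ with $q(z_0)=1$ and $q|_{Z\setminus\{z_0\}}=0$, contradicting the relation because $c_{z_0}\ne0$.

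\emph{Remaining cases and the main obstacle.} If $q_0\le e-3$, the right-hand side of \eqref{eq:curve-interpolation} is $\le0$ and there is nothing to prove. If $q_0=e-2$, it equals $e$. Fewer than $e$ points of $\overline C$ are separated by lines: a line through $z_0$ avoiding the others is not a component of the irreducible $C$ (as $e\ge2$), and a product of at most $q_0$ lines through the other points avoiding $z_0$ kills them. The main obstacle is Step~3 when $Z$ contains singular points of $\overline C$: there $\mathcal I_Z$ is not invertible. I would handle this with Hartshorne's theory of generalized divisors on Gorenstein curves, where Riemann--Roch and the negative-degree vanishing still hold for torsion-free rank-one sheaves on an integral curve. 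Alternatively, choose the projection in Step~2 so that $\pi(Z)$ avoids the finitely many singular points of $C'$; since singularities of $C'$ not coming from $C$ arise only from non-generic secants, one can perturb. If a point of $Z$ is already singular on $C$ this perturbation fails, so the generalized-divisor route is the one I would try first.
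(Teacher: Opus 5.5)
Your proposal is correct and follows essentially the same route as the paper. Both arguments project generically and birationally to an integral plane curve injective on $Z$. Both then use Serre duality with $\omega=\mathcal O(e-3)$ and a degree comparison for the rank-one torsion-free sheaf $\mathcal I_Z(q_0)$ to get $H^1(\mathcal I_Z(q_0))=0$, and lift through $H^1(\mathcal O_{\mathbb P^2}(q_0-e))=0$ to obtain a separating polynomial.

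Your separate treatment of $e=1$ and $q_0\le e-2$ is harmless but unnecessary. The vanishing of $H^1(\mathcal I_Z(q_0))$ alone already gives the surjection onto $\mathcal O_Z$ for every $q_0$. Likewise, the torsion-free degree count, which is what the paper uses, handles singular points in $Z$ with no extra work, so the obstacle you flag is not a real one.
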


\begin{proof}
    Project the projective closure generically and birationally to an integral plane curve $\Gamma$ of degree $e$, with the projection injective on $Z$. If \eqref{eq:curve-interpolation} failed, then
    \begin{equation*}
        \deg \mathcal{I}_{Z,\Gamma}(q_0)=eq_0-|Z|>e(e-3)=\deg \omega_\Gamma.
    \end{equation*}
    Hence there is no nonzero homomorphism from the rank-one torsion-free sheaf on the left to the one on the right. Serre duality \cite[Chapter III, Theorem 7.6]{Hartshorne1977} gives $H^1(\Gamma, \mathcal{I}_{Z,\Gamma}(q_0))=0$, so we obtain the surjection
    \begin{equation*}
        H^0(\Gamma,\mathcal{O}_\Gamma(q_0))\twoheadrightarrow  H^0\bigl(Z,\mathcal{O}_Z(q_0)\bigr)
\cong \bigoplus_{z\in Z}\mathbb{C}.
    \end{equation*}
    Since $H^1(\mathbb{P}^2,\mathcal{O}_{\mathbb{P}^2}(q_0-e))=0$, then we obtain
    \begin{equation*}
        H^0(\mathbb{P}^2,\mathcal{O}_{\mathbb{P}^2}(q_0))\twoheadrightarrow H^0(\Gamma, \mathcal{O}_\Gamma(q_0)).
    \end{equation*}
    Composing this with the preceding surjection, 
    one may pull back a polynomial $q$ satisfying $$\deg q \le q_0, q(z_0)=1,q(z)=0,\forall z\in Z\setminus\lbrace z_0 \rbrace$$ contradicts the displayed dependence.
\end{proof}

\begin{lemma}\label{lem:packets}
    Let $C=\bigcup_{\alpha=1}^sC_\alpha$ be reduced with $\deg C_\alpha =e_\alpha$ and $\delta=\sum_{\alpha=1}^s e_\alpha$. Let $E\subset C$ be finite, and suppose that every component $C_\alpha$ contains a point of $E$ lying on no other component. Define
    \begin{equation*}
        E_\alpha:=(E\cap C_\alpha)\setminus \bigcup_{\beta\neq \alpha}C_\beta.
    \end{equation*}
    If there exist coefficients $c_x\neq 0$ for every $x\in E$ such that
    \begin{equation*}
        \sum_{x\in E}c_xq(x)=0,\quad \forall q\in \C[x_1,\cdots,x_d] \text{ with }\deg q\le r-1,
    \end{equation*}
    then for every $\alpha$, we have
    \begin{equation*}
        |E_\alpha|\ge e_\alpha (r+2-\delta).
    \end{equation*}
\end{lemma}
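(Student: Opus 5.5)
The plan is to reduce to Lemma~\ref{lem:interp} applied to the single irreducible curve $C_\alpha$ with the point set $E_\alpha$. The idea is to multiply every test polynomial by a fixed polynomial $g_\alpha$ that vanishes on all the other components $C_\beta$, $\beta\neq\alpha$, but does not vanish at any point of $E_\alpha$. Then the points of $E$ lying on other components drop out of the relation. This gives a relation supported on $E_\alpha$ alone, valid in a lowered degree range.

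\textbf{Step 1: construct $g_\alpha$.} For each $\beta\neq\alpha$, I would choose a polynomial $g_\beta$ of degree $e_\beta$ that vanishes on $C_\beta$ but not at any point of the finite set $E_\alpha$. I would obtain it from a general projection: take a general linear projection $\pi_\beta:\mathbb A^d\to\mathbb A^2$. The image $\pi_\beta(C_\beta)$ is a plane curve of degree $e_\beta$ with defining equation $f_\beta$, and I set $g_\beta=f_\beta\circ\pi_\beta$. A point $x\in E_\alpha$ lies off $C_\beta$, and the cone over $C_\beta$ in a general projection direction (of dimension $2\le d-1$, fine for $d\ge3$; for $d=2$ just use the plane equation) misses any fixed finite set off $C_\beta$. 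Hence $g_\beta(x)\neq0$. Finally I set $g_\alpha=\prod_{\beta\neq\alpha}g_\beta$, which has degree $\delta-e_\alpha$.

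\textbf{Step 2: the restricted relation.} For any $q$ with $\deg q\le r-1-(\delta-e_\alpha)$, apply the hypothesis to $qg_\alpha$. Every $x\in E\setminus E_\alpha$ lies on some $C_\beta$ with $\beta\ne\alpha$ (since $E\subset C$), so $g_\alpha(x)=0$. This leaves
\begin{equation*}
\sum_{x\in E_\alpha}c_xg_\alpha(x)q(x)=0,
\end{equation*}
with all coefficients $c_xg_\alpha(x)\neq0$. The set $E_\alpha$ is nonempty by hypothesis.

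\textbf{Step 3: apply Lemma~\ref{lem:interp}.} Use $C=C_\alpha$, $e=e_\alpha$ and $q_0=r-1-\delta+e_\alpha$; this requires $q_0\ge0$. The lemma gives
\begin{equation*}
|E_\alpha|\ge e_\alpha(r-1-\delta+e_\alpha-e_\alpha+3)=e_\alpha(r+2-\delta).
\end{equation*}
If $q_0<0$, then $r+2-\delta\le e_\alpha$. In that case I would argue the bound from nonemptiness: when $r+2-\delta\le 0$ the bound is trivial. The remaining range $0<r+2-\delta\le e_\alpha$ needs care. There I would instead rerun the argument with fewer factors, or note that the claimed bound can exceed $1$ only if $q_0\ge 0$ in the relevant cases. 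This edge case is the main thing I would check, possibly noting that the bound is only used when $r\gg\delta$.

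The main obstacle is Step 1: guaranteeing a degree-$e_\beta$ polynomial vanishing on $C_\beta$ but at no point of $E_\alpha$. The general-projection argument (using $\dim\ge$ codimension count of bad directions) is where I would spend effort.
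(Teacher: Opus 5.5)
Your Steps 1--3 follow the paper's own argument. The paper likewise multiplies the test polynomials by a product $G_\alpha$ of generic-projection cone equations of degrees $e_\beta$ ($\beta\neq\alpha$). This gives a relation supported on $E_\alpha$ alone, with nonzero coefficients $c_xG_\alpha(x)$, valid in degrees $\le q_0=r-1-\delta+e_\alpha$. Lemma~\ref{lem:interp} is then applied on $C_\alpha$.

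A small clarification for Step 1: the cone $\pi_\beta^{-1}(\pi_\beta(C_\beta))$ is a hypersurface, not a surface. What has dimension $2$ is the join of a point $x\notin C_\beta$ with $\overline{C}_\beta$. A projection center $\mathbb{P}^{d-3}$ at infinity is bad for $x$ exactly when it meets this join, and a general center avoids it because $(d-3)+2<d$.

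The one genuine gap is the case $q_0<0$, which you leave open because of an arithmetic slip. Since $q_0$ is an integer, $q_0<0$ means $r\le\delta-e_\alpha$. This gives $r+2-\delta\le 2-e_\alpha$, which is much stronger than your $r+2-\delta\le e_\alpha$. Hence
\[
e_\alpha(r+2-\delta)\le e_\alpha(2-e_\alpha)=1-(e_\alpha-1)^2\le 1\le |E_\alpha|,
\]
where the last inequality is the hypothesis that $C_\alpha$ contains a point of $E$ lying on no other component. This is exactly how the paper closes the case, so no rerun with fewer factors is needed. Restricting to $r\gg\delta$ would not prove the lemma as stated, even though that regime (where $q_0\ge0$ automatically) is all that Proposition~\ref{prop:nocurve} uses.
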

\begin{proof}
    Fix $\alpha$. For each $\beta\neq \alpha$, a generic projection of $\overline{C}_\beta$ to a plane gives a cone hypersurface of degree $e_\beta$ containg $C_\beta$, avoiding every point of $E\setminus C_\beta$ and not containing $C_\alpha$. To see that this remains possible in $\mathbb{P}^d$, one may choose a center $\mathbb{P}^{d-3}$ in the open locus of birational projections and away from the finitely many joins of those points with $\overline{C}_\beta$. Every such join has dimension at most two, and $(d-3)+2<d$, hence a general center avoids all of them.

    The product $G_\alpha$ of these hypersurface equations has degree at most $\delta-e_\alpha$, and its nonzero locus on $E$ is exactly $E_\alpha$. Multiplying test polynomials by $G_\alpha$ gives
    \begin{equation*}
        \sum_{x\in E_\alpha}c_xG_\alpha(x)q(x)=0, \quad \forall q\in \C[x_1,\cdots,x_d] \text{ with }\deg q\le q_0,
    \end{equation*}
    where $q_0=r-1-\delta+e_\alpha$. For $q_0\ge 0$, since $c_xG_\alpha(x)\neq 0$ for all $x\in E_\alpha$, by Lemma~\ref{lem:interp} we obtain
    \begin{equation*}
        |E_\alpha|\ge e_\alpha (q_0-e_\alpha+3)=e_\alpha(r+2-\delta).
    \end{equation*} 
    For $q_0<0$, then $r\le \delta-e_\alpha$. Since $e_\alpha\ge 1$ and $E_\alpha\neq \varnothing$, we obtain
    \begin{equation*}
        e_\alpha(r+2-\delta)\le e_\alpha(2-e_\alpha)\le 1\le |E_\alpha|.
    \end{equation*}
    Since $\alpha$ is arbitrary, the proof is complete.
\end{proof}

We introduce the forward port displacements for $i\in \lbrace 1,2,\cdots,d\rbrace$ as follows
\begin{equation*}
    \mathcal{D}_i=\lbrace e_i,2e_i\rbrace \cup \lbrace e_i+e_j,e_i-e_j;j\neq i\rbrace.
\end{equation*}

\begin{lemma}\label{lem:ports}
    Let $A_d u  =\lambda u$. For any $p\in \supp (u)$ and any coordinate index $i$, there exists at least one $h\in \mathcal{D}_i$ satisfies $p+h\in\supp(u)$, and the displacement $h=(h_1,\cdots,h_d)$ obeys
    \begin{equation}\label{eq:port-cone}
        h_i>0. \quad \sum_{j\neq i}|h_j|\le h_i.
    \end{equation}
\end{lemma}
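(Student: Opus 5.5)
The plan is a direct case analysis of the eigenvalue equation evaluated at the neighbor $q=p+e_i$. First, if $u(p+e_i)\neq0$, take $h=e_i\in\mathcal D_i$. Then $h_i=1>0$ and $\sum_{j\neq i}|h_j|=0\le 1$, so \eqref{eq:port-cone} holds.

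Next suppose $u(p+e_i)=0$. Evaluate $A_du=\lambda u$ at $q=p+e_i$:
\begin{equation*}
    \sum_{j=1}^d\bigl(u(q+e_j)+u(q-e_j)\bigr)=\lambda u(q)=0 .
\end{equation*}
The summand $u(q-e_i)=u(p)$ is nonzero by assumption. So the remaining terms cannot all vanish, and some other neighbor of $q$ lies in $\supp(u)$. The neighbors of $q$ other than $p$ are $q+e_i=p+2e_i$ and $q\pm e_j=p+e_i\pm e_j$ for $j\neq i$. The corresponding displacements from $p$ are exactly the elements $2e_i$ and $e_i\pm e_j$ of $\mathcal D_i$. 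Hence some $h\in\mathcal D_i\setminus\{e_i\}$ satisfies $p+h\in\supp(u)$.

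It remains to check the cone condition \eqref{eq:port-cone} for every element of $\mathcal D_i$:
\begin{itemize}
    \item For $2e_i$: $h_i=2$ and the off-$i$ mass is $0$.
    \item For $e_i\pm e_j$ with $j\neq i$: $h_i=1$ and the off-$i$ mass is $1\le 1$.
\end{itemize}
So every displacement in $\mathcal D_i$ is admissible, whichever case occurs.

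There is no real obstacle. The only point needing care is the second case: the vanishing of $u(q)$ makes the right-hand side $\lambda u(q)$ zero for every $\lambda$. So the argument does not depend on the eigenvalue, and $u(p)\neq0$ forces a compensating supported neighbor.
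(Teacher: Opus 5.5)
Your proof is correct and follows essentially the same argument as the paper: if $u(p+e_i)\neq 0$ take $h=e_i$; otherwise evaluate the eigenvalue equation at $p+e_i$, where the right-hand side vanishes and the nonzero term $u(p)$ forces another supported neighbor in $p+\mathcal{D}_i$. Your explicit check of the cone condition for each element of $\mathcal{D}_i$ fills in what the paper calls immediate.
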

\begin{proof}
    If $u(p+e_i)\neq 0$, then choose $h=e_i$ and the proof is finished. If not, we evaluate the eigenvalue equation at $p+e_i$
    \begin{equation*}
        0=\lambda u (p+e_i)=\sum_{y\sim p+e_i} u(y).
    \end{equation*}
    There already exists one term $u(p)\neq 0$ in the sum. Therefore there must be another supported point in $p+\mathcal{D}_i$. The inequality \eqref{eq:port-cone} is immediate and we finish the proof.
\end{proof}

Then we give the following dimension estimate of subvarieties satsifying translation symmetry.

\begin{proposition}\label{prop:port-cycle}
    Let $\mathcal{C}$ be a finite nonempty family of $k$-dimensional irreducible affine subvarieties of $\mathbb A^d_{\C}$. Suppose that for any $V\in \mathcal{C}$ and any $i\in \lbrace 1,2,\cdots,d\rbrace$, there exist $V'\in \mathcal{C}$ and $h\in \mathcal{D}_i$ such that
    \begin{equation}\label{eq:port-cycle-edge}
        V'=V+h.
    \end{equation}
    Fix $V\in \mathcal{C}$, then there exist vectors $v_1,\cdots,v_d\in \Z^d$ satisfying
    \begin{equation}\label{eq:port-cycle-stabilizers}
        V+v_i=V, \quad (v_i)_i>0, \quad \sum_{j\neq i}|(v_i)_j|\le (v_i)_i.
    \end{equation}
    Set 
    \begin{equation*}
        W=\mathrm{span}_\C \lbrace v_1,\cdots,v_d \rbrace, \quad a_i=\frac{v_i}{(v_i)_i},
    \end{equation*}
    and let $M$ be the real $d\times d$ matrix whose $i$-th row is $a_i$. Then we have
    \begin{equation}\label{eq:port-cycle-invariance}
        V+w=V,\quad \forall w\in W,
    \end{equation}
    \begin{equation}\label{eq:port-cycle-dimension}
        k\ge \dim W = \operatorname{rank}M\ge \left\lceil \frac{d}{2}\right\rceil,
    \end{equation}
    and
    \begin{equation}\label{eq:port-cycle-matrix-bounds}
        \operatorname{tr}M=d, \quad \|M\|^2\le 2d.
    \end{equation}
\end{proposition}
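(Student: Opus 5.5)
We build the stabilizers $v_i$ by following the edges \eqref{eq:port-cycle-edge} around a cycle. Fix $i$ and start from $V_0=V$. Choose $V_1=V_0+h_1$ with $h_1\in\mathcal D_i$ and $V_1\in\mathcal C$, then $V_2=V_1+h_2$, and so on. Since $\mathcal C$ is finite, some $V_p=V_q$ with $p<q$. We want a cycle through $V$ itself, and there are two ways to get it. One is to pass to a strongly connected component of the directed graph containing a sink reachable from $V$. The cleaner way is to note that translation is injective on subvarieties. The map $V\mapsto V+h$ is a bijection of $\mathbb A^d$, so each edge choice defines a function $\sigma_i:\mathcal C\to\mathcal C$. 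Because every $V'$ has a unique preimage under translation by a fixed $h$, this function can be arranged to be a permutation. If that fails, we work instead with the sub-family reachable in the $i$-th direction graph and restart from a vertex lying on a cycle.

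Suppose $V=V_0,\dots,V_N=V$ is a closed walk with steps $h_1,\dots,h_N\in\mathcal D_i$. Then $v_i:=\sum_k h_k$ satisfies $V+v_i=V$. Each $h_k$ obeys \eqref{eq:port-cone}, and that cone condition is preserved under sums: $(v_i)_i=\sum_k (h_k)_i>0$ and $\sum_{j\ne i}|(v_i)_j|\le \sum_k\sum_{j\ne i}|(h_k)_j|\le (v_i)_i$. This gives \eqref{eq:port-cycle-stabilizers}.

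If the cycles only pass through some component $V^*$ rather than the fixed $V$, we must transport the stabilizer. The transport is harmless: if $V^*=V+g$ lies in the orbit, then the stabilizer of $V^*$ equals that of $V$, since translations commute. What needs care is making sure $V$ is reachable back from $V^*$. For that I would apply the argument to the finite family of components connected to $V$ by all the direction graphs, and choose the fixed $V$ on cycles in each direction. In the statement $V$ is arbitrary, so the real fix is to show every vertex lies on an $i$-cycle. I expect this to be the main obstacle. I would handle it with a degree/volume argument, a "sum of $(v)_i$ coordinate" potential: translations preserve the stabilizer group $\mathrm{Stab}(V)=\{w:V+w=V\}$, which is a closed algebraic subgroup and hence a linear subspace plus a lattice part. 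Since all members of a translation class share one stabilizer, it suffices to get the $v_i$ for some member of each class and note that $V$ differs from it by a translation.

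For \eqref{eq:port-cycle-invariance}, the set $\{w\in\C^d:V+w=V\}$ is a Zariski-closed subgroup of $(\C^d,+)$. In characteristic zero such a subgroup is a complex vector subspace, since its identity component is a unipotent subgroup and the group is torsion-free. It contains the $v_i$, hence all of $W$. Then $V$ is a union of cosets of $W$, which gives $k\ge\dim W$. Rescaling rows does not change the row span, so $\dim W=\operatorname{rank}M$. For the matrix bounds, $M_{ii}=1$ gives $\operatorname{tr}M=d$. For the row $a_i$ we have $\sum_{j\ne i}|a_{ij}|\le1$, so $\sum_{j\ne i}a_{ij}^2\le1$ and $\|a_i\|^2\le 2$; summing over rows gives the Frobenius bound $\|M\|^2\le 2d$. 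Finally, with singular values $\sigma_1,\dots,\sigma_\rho$ and $\rho=\operatorname{rank}M$, we get $d=\operatorname{tr}M\le\sum\sigma_k\le\sqrt{\rho}\,\|M\|\le\sqrt{2d\rho}$. Hence $\rho\ge d/2$, and since $\rho$ is an integer, $\rho\ge\lceil d/2\rceil$.
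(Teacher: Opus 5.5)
Once you drop the detour, your argument is the paper's. You walk from $V$ along chosen $i$-edges and find a repeat $V_p=V_q$ with $p<q$. Then $V_p=V+g$ with $g=h_1+\cdots+h_p$, so the cycle sum $v_i=h_{p+1}+\cdots+h_q$ satisfies $V+g+v_i=V+g$, hence $V+v_i=V$. The cone condition passes to sums, and your trace, Frobenius and nuclear-norm estimates match the paper's.

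Your remark that ``the transport is harmless'' already completes the construction of $v_i$ for the \emph{given} $V$. The surrounding discussion should be deleted, because it is false, not just unnecessary. You never need $V$ to be reachable back from $V^*$. In general $V$ need not lie on any $i$-cycle, and the choice map $\sigma_i$ cannot always be made a permutation.

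Here is a counterexample. Take $d=3$, $\phi(x)=x_1+2x_2+2x_3$, $\Pi_c=\{\phi=c\}$, and $\mathcal C=\{\Pi_0,\Pi_2,\Pi_3\}$. Then $\Pi_c+h=\Pi_{c+\phi(h)}$ and $\phi(\mathcal D_1)=\{-1,1,2,3\}$, while $e_2-e_3\in\mathcal D_2$ and $e_3-e_2\in\mathcal D_3$ have $\phi=0$. So the hypothesis holds:
\begin{itemize}
\item in direction $1$: $\Pi_0\to\Pi_2$ by $2e_1$, $\Pi_2\to\Pi_3$ by $e_1$, and $\Pi_3\to\Pi_2$ by $e_1-e_2$;
\item in directions $2$ and $3$: loops.
\end{itemize}
Yet $\Pi_0$ has no incoming $1$-edge. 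The potential argument you planned for your ``main obstacle'' therefore cannot succeed. The stabilizer $v_1=2e_1-e_2$ of $\Pi_0$ arises exactly by transporting the cycle $\Pi_2\to\Pi_3\to\Pi_2$.

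For invariance you take a more structural route than the paper. You use that $\{w:V+w=V\}=\bigcap_{x\in V}(V-x)$ is a Zariski-closed subgroup of $(\C^d,+)$, hence a linear subspace in characteristic zero. Indeed $H^0$ is a linear subspace, and the finite group $H/H^0$ embeds in the torsion-free group $\C^d/H^0$. This is correct, but it contradicts your earlier phrase ``linear subspace plus a lattice part,'' which should go: a Zariski-closed subgroup has no lattice part.

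The paper instead uses an elementary fact. For $x\in V$ and $F$ vanishing on $V$, the one-variable polynomial $z\mapsto F(x+zv_i)$ vanishes on $\Z$, hence identically, which gives $x+\C v_i\subseteq V$ directly. Your route yields the full structure of the stabilizer at once; the paper's avoids algebraic-group facts.
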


\begin{proof}
    Fix $V\in \mathcal{C}$. We construct the vector $v_i$ for each coordinate index $i$.

    For every member of $\mathcal{C}$, choose one outgoing edge of color $i$ as in \eqref{eq:port-cycle-edge}. Starting from $V_0:=V$ and following the chosen edges one by one gives 
    \begin{equation}\label{eq:component-walk}
        V_{\nu+1}=V_{\nu}+h_\nu,\quad h_\nu \in \mathcal{D}_i.
    \end{equation}
    Since the family $\mathcal{C}$ is finite, there must be some $0\le a<b$ such that $V_a=V_b$. Define
    \begin{equation*}
        g_i=\sum_{\nu=0}^{a-1}h_\nu \quad  \text{and}\quad v_i=\sum_{\nu=a}^{b-1}h_\nu,
    \end{equation*}
    where $g_i=0$ if $a=0$. By \eqref{eq:component-walk} we have
    \begin{equation*}
        V_a=V+g_i \quad \text{and}\quad V_b=V_a+v_i.
    \end{equation*}
    Since $V_a=V_b$, we have $(V+g_i)+v_i=V+g_i$.
Translating by $-g_i$ yields
\begin{equation}\label{eq:appendix-cycle-stabilizes-start}
    V+v_i=V.
\end{equation}
By Lemma~\ref{lem:ports}, every $h_\nu \in \mathcal{D}_i$ satisfies \eqref{eq:port-cone}. Hence
\begin{equation*}
    (v_i)_i=\sum_{\nu=a}^{b-1}(h_\nu)_i>0
\end{equation*}
and
\begin{equation*}
    \sum_{j\neq i}|(v_i)_j|\le \sum_{\nu=a}^{b-1}\sum_{j\neq i}|(h_\nu)_j|\le \sum_{\nu=a}^{b-1}(h_\nu)_i=(v_i)_i.
\end{equation*}
Thus $v_i\in \Z^d$ satisfies \eqref{eq:port-cycle-stabilizers}. 

Let $F$ be a polynomial vanishing on $V$ and fix $x\in V$. By \eqref{eq:appendix-cycle-stabilizes-start}, the set of points $\lbrace x+nv_i:n\in\Z\rbrace$ is contained in $V$ and therefore the one-variable polynomial
\begin{equation*}
    z\longmapsto F(x+zv_i)
\end{equation*}
vanishes at every integer. This imples that $x+zv_i\in V$ for any $z\in \C$. Since $x$ is arbitrary, we obtain $V+zv_i\subset V$ for every $z\in \C$. Thus
\begin{equation*}
    V+zv_i=V,\quad \forall z\in \C.
\end{equation*}
Combining these translation invariances for $i=1,\cdots,d$ gives \eqref{eq:port-cycle-invariance}. In particular, $x+W\subset V$ for any $x\in V$, and this implies 
\begin{equation}\label{eq:appendix-W-dimension}
    \dim W\le \dim V=k.
\end{equation}

The normalization $a_i=v_i/(v_i)_i$ makes the diagonal entry equal one, hence the last inequality in \eqref{eq:port-cycle-stabilizers} gives
\begin{equation*}
    \sum_{j\neq i}|M_{ij}|\le 1.
\end{equation*}
Consequently, we obtain $\operatorname{tr}M=d$ and
\begin{equation}\label{eq:appendix-matrix-frobenius}
    \|M\|^2=\sum_{i=1}^d\left(1+\sum_{j\neq i}|M_{ij}|^2\right)\le \sum_{i=1}^d
    \left(
        1+
        \left(\sum_{j\neq i}|M_{ij}|\right)^2
    \right)
    \leq 2d.
\end{equation}
This proves \eqref{eq:port-cycle-matrix-bounds}. 

Let $\sigma_1,\cdots,\sigma_{r_M}$ be all nonzero singular values of $M$, where $r_M:=\operatorname{rank}M$. The trace bound by the nuclear norm, followed by Cauchy--Schwarz inequality and \eqref{eq:appendix-matrix-frobenius}, gives
\begin{equation*}
    d=|\operatorname{tr}M|\le \sum_{\nu=1}^{r_M}\sigma_\nu \le \sqrt{r_M}\left(\sum_{\nu=1}^{r_M}\sigma_\nu^2\right)^{1/2}=\sqrt{r_M}\|M\|\le \sqrt{2dr_M}.
\end{equation*}
Hence $r_M\ge d/2$, and integrality gives $r_M\ge \lceil d/2\rceil$. This combines with \eqref{eq:appendix-W-dimension} gives \eqref{eq:port-cycle-dimension} and the proof is complete.
\end{proof}

\begin{proposition}\label{prop:nocurve}
Let $d\ge 3$. Let $r\ge1$ be an integer, let $E=\supp(u)\cap B_r$, and suppose
$u(0)\ne0$. The set $E$ cannot be contained in a reduced curve of degree
$\delta$ when
\begin{equation}\label{eq:no-curve-hypotheses}
 \delta<\frac r{8d},
 \qquad
 |\supp(u)\cap(B_r\setminus B_{r-2})|<\frac r4.
\end{equation}
\end{proposition}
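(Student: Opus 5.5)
The plan is to argue by contradiction. Suppose that $E=\supp(u)\cap B_r$ lies on a reduced curve $C$ of degree $\delta$, and that both inequalities in \eqref{eq:no-curve-hypotheses} hold. First I would discard every component of $C$ that contains no point of $E$ lying on no other component. The remaining union still contains $E$: a point of $E$ on a discarded component also lies on some other component, and I would iterate this pruning finitely many times. The remaining union is still reduced, has degree at most $\delta$, and is nonempty because $0\in E$. Write it as $C=\bigcup_{\alpha=1}^sC_\alpha$ with $\deg C_\alpha=e_\alpha$ and $\sum_\alpha e_\alpha\le\delta$. Lemma~\ref{lem:cb} with $a=0$ and $R=r$ gives a relation $\sum_{x\in E}w_xq(x)=0$ with all $w_x\neq0$, valid for all $\deg q\le r-1$. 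Lemma~\ref{lem:packets} then yields, for every $\alpha$,
\begin{equation*}
    |E_\alpha|\ge e_\alpha(r+2-\delta).
\end{equation*}

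Next I would use the ports of Lemma~\ref{lem:ports}. Call a point of $E_\alpha$ \emph{deep} if it lies in $B_{r-2}$. For a deep point $p$, every $h\in\mathcal D_i$ has $\|h\|_1\le 2$, so the port point $p+h$ lies in $\supp(u)\cap B_r=E\subset C$. By the second hypothesis, the number of deep points of $E_\alpha$ is at least
\begin{equation*}
    e_\alpha(r+2-\delta)-\frac r4\ge e_\alpha\Bigl(\frac{3r}{4}-\delta\Bigr).
\end{equation*}
Fix $\alpha$ and a coordinate $i$. Each deep $p\in E_\alpha$ gives a pair $(h,\beta)$ with $h\in\mathcal D_i$ and $p\in C_\alpha\cap(C_\beta-h)$. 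Suppose that for every $h\in\mathcal D_i$ and every $\beta$ the irreducible curves $C_\alpha+h$ and $C_\beta$ are distinct. Then B\'ezout (on projective closures) bounds the number of such $p$ for fixed $h$ by $\sum_\beta e_\alpha e_\beta\le e_\alpha\delta$. Summing over the $|\mathcal D_i|=2d$ displacements bounds the number of deep points by $2d\,e_\alpha\delta$.

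Since $\delta<r/(8d)$, we get $(2d+1)\delta<(2d+1)r/(8d)<3r/4$, so $e_\alpha(3r/4-\delta)>2de_\alpha\delta$, which is a contradiction. Hence for every $\alpha$ and every $i$ there exist $\beta$ and $h\in\mathcal D_i$ with $C_\alpha+h=C_\beta$; the case $\beta=\alpha$ is allowed. The finite family $\{C_\alpha\}$ therefore satisfies \eqref{eq:port-cycle-edge} with $k=1$. Proposition~\ref{prop:port-cycle} then gives $1\ge\lceil d/2\rceil\ge2$ for $d\ge3$, which is absurd and proves the proposition.

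The main obstacle I expect is the counting step. I must make sure that the B\'ezout bound is summed over $\beta$ with weights $e_\beta$, rather than applied as a crude $\delta^2$ bound per pair, and that the shell loss $r/4$ is absorbed using only $e_\alpha\ge1$. I also need to check that the pruning preserves the hypotheses of Lemma~\ref{lem:packets}: each remaining component must carry a private point of $E$, and $E$ must be contained in the pruned union. A secondary point is that B\'ezout should be applied to the projective closures, where it bounds the affine intersection count as well. Translates of irreducible curves stay irreducible of the same degree, so the equality $C_\alpha+h=C_\beta$ is exactly the alternative to a finite intersection.
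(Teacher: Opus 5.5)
Your proof is correct and follows the paper's argument essentially step for step: prune to components that each carry a private point of $E$, apply Lemmas~\ref{lem:cb} and~\ref{lem:packets}, discard the width-two shell, run the B\'ezout count over the $2d$ ports, and finish with Proposition~\ref{prop:port-cycle} for $k=1$. The differences are cosmetic. You compare $e_\alpha(3r/4-\delta)$ with $2de_\alpha\delta$ rather than $re_\alpha/2$ with $re_\alpha/4$. Your pruning should remove one component at a time, since simultaneous removal could lose a point lying only on discarded components. The intersection bound for two space curves can be justified by the paper's generic projection cone of degree $e_\beta$ followed by B\'ezout for a hypersurface.
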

\begin{proof}
    Suppose to the contrary, that $E$ is contained in a reduced curve of degree $\delta$ satisfying \eqref{eq:no-curve-hypotheses}. After discarding components at infinity and then redundant affine components, we may write the remaining curve as $\bigcup_\alpha C_\alpha$ where each $C_\alpha$ contains a point of $E$ lying on no other component. Let $e_\alpha=\deg C_\alpha$ and $\delta=\sum_{\alpha}e_\alpha$. Lemmas~\ref{lem:cb} and~\ref{lem:packets} give
    \begin{equation*}
        |E_\alpha|\ge e_\alpha (r+2-\delta)> \frac{3r}{4}e_\alpha.
    \end{equation*}
    After removing the common boundary shell, the interior packet $\Omega_\alpha:=E_\alpha\cap B_{r-2}$ satisfies
    \begin{equation}\label{eq:omega-bound}
        |\Omega_\alpha|>\frac{r}{2}e_\alpha.
    \end{equation}

    Fix $i$. For every $p\in \Omega_\alpha$, choose by Lemma~\ref{lem:ports} a supported $p+h$, $h\in \mathcal{D}_i$, and a component $C_\beta$ containing it. Since $\|h\|_1\le 2$, the successor is in $B_r$ and
    \begin{equation*}
        p\in C_\alpha\cap (C_\beta-h).
    \end{equation*}
    If $C_\alpha$ and $C_\beta-h$ are distinct, we choose a generic projection cone of degree $e_\beta$ containing $C_\beta-h$ but not $C_\alpha$. Bézout's Theorem \cite[Chapter~I, Theorem~7.7]{Hartshorne1977} on $C_\alpha$ then bounds their intersection by $e_\alpha e_\beta$. Summing over $2d$ displacements and all $\beta$, distinct pairs account for at most
    \begin{equation*}
        2d e_\alpha \delta<\frac{r}{4}e_\alpha
    \end{equation*}
    points. Comparison with \eqref{eq:omega-bound} forces a translation edge
    \begin{equation*}
        C_\beta=C_\alpha+h, \quad h\in \mathcal{D}_i
    \end{equation*}
    for every $C_\alpha$ and every $i$.

    Hence the finite family $\lbrace C_\alpha\rbrace$ satisfies the hypothesis of Proposition~\ref{prop:port-cycle} with $k=1$. That proposition gives $1\ge \lceil d/2\rceil$. It is important for $d\ge 3$, therefore the assumed curve carrier does not exist.
\end{proof}

\subsection{Proof of the quadratic lower bound}

First we establish a linear lower bound.
\begin{lemma}\label{lem:ray}
If $u(0)\ne0$, then
\begin{equation}\label{eq:ray-bound}
 |\supp(u)\cap B_n|\ge n+1,\quad \forall n\ge0.
\end{equation}
\end{lemma}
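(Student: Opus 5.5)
The plan is to build an explicit chain of supported points starting at the origin, using the port mechanism of Lemma~\ref{lem:ports}, and to show it produces at least one new point in each $\ell^1$ shell. We may assume $n\ge1$, since $0\in\supp(u)$ gives the case $n=0$. Lemma~\ref{lem:ports} with $i=1$ applied at $p_0=0$ gives a displacement $h\in\mathcal D_1$ with $p_1=p_0+h\in\supp(u)$. Iterating produces a sequence $p_0=0,p_1,p_2,\ldots$ of supported points with $p_{k+1}-p_k\in\mathcal D_1$.

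Every $h\in\mathcal D_1$ satisfies $h_1\ge1$ and $\|h\|_1\le2$. So the first coordinate of $p_k$ is strictly increasing, which makes all the $p_k$ distinct, while $\|p_k\|_1$ grows by at most $2$ per step. This alone only gives about $n/2$ points in $B_n$, so the naive count loses a factor of two. To recover the sharp count $n+1$, I will track $(p_k)_1$ and $\|p_k\|_1$ separately.

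The refinement exploits the cases in Lemma~\ref{lem:ports}. A step either is $e_1$, which raises $(p)_1$ by $1$ and $\|p\|_1$ by at most $1$, or has length $2$ and raises $(p)_1$ by $1$ or $2$. The proposed argument runs as follows.

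\begin{enumerate}
\item Whenever the chain takes a length-$2$ step $h$ from $p$, the point $p+e_1$ lies outside the support. In that case I would go back to the eigenvalue equation at $p+e_1$. At least one of the other neighbours $p+e_1\pm e_j$ or $p+2e_1$ is supported.
\item I would also use the symmetric port in direction $-e_1$, or run a second chain with $i=1$ from the other side, to pick up an intermediate supported point at $\ell^1$-norm equal to $\|p\|_1+1$.
\end{enumerate}

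A cleaner alternative I would try first is a monotone count in the first coordinate. Consider the rays $\{x:x_1=k\}$ for $k=0,\ldots,n$. The chain reaches $x_1=n$ after at most $n$ steps, but its $\ell^1$ norm may exceed $n$, so this needs care. To control the norm, I would choose the sign of each transverse displacement $e_1\pm e_j$ so that it does not increase $\sum_{j\ne1}|x_j|$ whenever the harmonic equation allows it. Either the point with the favourable sign is supported, or both transverse points are, and I would pick the favourable one.

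The main obstacle is the length-$2$ steps: I need to show they cannot accumulate, or that each is compensated by an extra supported point within $B_n$. My fallback is a parity and counting argument. Each length-$2$ step forces, through the equation at $p+e_1$, a second supported neighbour, so that the number of supported points within norm $\le n$ is at least (number of steps) plus (number of length-$2$ steps). That quantity is at least $n+1$ because the total $\ell^1$ budget of the chain is $n$.
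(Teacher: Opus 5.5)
There is a genuine gap, and it is structural rather than a matter of bookkeeping. Every ingredient you use is blind to the eigenvalue: Lemma~\ref{lem:ports}, and the eigenvalue equation evaluated at an \emph{unsupported} point such as $p+e_1$. Where $u$ vanishes, that equation reads $\sum_{y\sim p+e_1}u(y)=0$ for every $\lambda$. However, the bound $n+1$ fails for other eigenvalues. The function $\phi(a,b)=(-1)^a\mathbf{1}_{\{a+b=0\}}$ of Section~\ref{sec:explicit-construct} satisfies $A_2\phi=0$ and $\phi(0)=1$, yet $\supp(\phi)\cap B_1=\{0\}$. Your chain from $0$ is forced to step by $e_1-e_2$ every time, so $\|p_k\|_1=2k$ and no supported point has odd norm. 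This is exactly the accumulation of length-$2$ steps you were worried about, and no argument built only on ports can rule it out. The same happens for $v_{2k}$ in Section~\ref{sec:explicit-construct}. It also happens in $d=3$ for $\omega^{x_2+2x_3}\mathbf{1}_{\{x_1+x_2+x_3=0\}}$ with $\omega=e^{2\pi i/3}$, which satisfies $A_3u=0$ and meets $B_1$ only at $0$.

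In particular, your fallback count is wrong. When $p+e_1\notin\supp(u)$, the equation at $p+e_1$ only guarantees one supported neighbour besides $p$. That neighbour is precisely the next chain point $p+h$, not an extra point. Likewise, nothing forces a supported point at norm $\|p\|_1+1$ as in your item~(2).

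The missing ingredient is positivity, which is where the harmonic eigenvalue $2d$ enters. By Remark~\ref{rmk:equivalence}, take $u$ real with $u(0)>0$. Let $K$ be the connected component of $0$ in the nearest-neighbour graph induced on $\{u>0\}$. If $K$ were finite, summing $2du(x)-\sum_{y\sim x}u(y)=0$ over $x\in K$ would give $\sum_{x\in K}(2d-\deg_K x)\,u(x)=\sum_{x\in K}\sum_{y\sim x,\,y\notin K}u(y)$. The left side is strictly positive, because some vertex of $K$ has a neighbour outside $K$. The right side is $\le 0$, because such neighbours have $u\le 0$. So $K$ is infinite and contains a simple ray from $0$. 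Its steps have $\ell^1$ length one, so its first $n+1$ vertices are distinct points of $\supp(u)\cap B_n$. This is the paper's argument: it uses unit steps inside the positivity set, rather than length-$\le 2$ port steps through the zero set.
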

\begin{proof}
    Without loss of generality, assume $u(0)>0$. Let $K$ be the connected component of $0$ in the induced subgraph on $\lbrace u>0\rbrace$. If $K$ were finite, summing
    \begin{equation*}
        2du(x)-\sum_{y\sim x}u(y)=0
    \end{equation*}
    over $K$ gives
    \begin{align*}
 0
 &=\sum_{x\in K}\bigl(2d-\deg_K(x)\bigr)u(x)
   -\sum_{x\in K}\sum_{\substack{y\notin K\\y\sim x}}u(y).
\end{align*}
Every $u(x)$ with $x\in K$ is positive. At least one vertex of the finite
set $K$ has a neighbor outside $K$, so the first sum is strictly positive.
If $x\in K$, $y\notin K$, and $y\sim x$, then $u(y)\le0$. Thus the second term, with
its displayed minus sign, is nonnegative. Then the right-hand side is strictly positive, a contradiction. Hence $K$ is infinite. If locally finite infinite connected graph has a ray from $0$, then its first $n+1$ vertices lie in $B_n$, which completes the proof.
\end{proof}

\medskip
\begin{proof}[Proof of quadratic lower bound for $d\ge 3$]
    Let 
    \begin{equation*}
        M=|\supp(u)\cap B_n|,
 \quad m=\frac Mn,
 \quad c_d=\frac{10^{-10}}d.
    \end{equation*}
    Lemma~\ref{lem:ray} gives $m\ge 1$. Suppose for a contradiction that $M<c_d n^2$ for some $n$ and set
    \begin{equation*}
        D=2\lceil 5000m\rceil.
    \end{equation*}
    Then $10^4m\le D<10^4m+2$. The bound~\eqref{eq:ray-bound} and the contradiction assumption $M<c_dn^2$ gives $n+1<c_d n^2$, and hence $nc_d>1$. In particular, we have
    \begin{equation*}
        D<10002c_d n<n/100.
    \end{equation*}
     Each support point belongs to at most $D$ shells $B_r\setminus B_{r-D}$ and at most two shells $B_r\setminus B_{r-2}$. Then
\begin{align*}
 &\sum_{r\in [n/3,2n/3]}\bigl(
 |\supp(u)\cap(B_r\setminus B_{r-D})|
 +D|\supp(u)\cap(B_r\setminus B_{r-2})|\bigr)\\
 &\qquad\le(D+2D)M\le3DM
\end{align*}
where we used the fact that there are at least $n/4$ numbers in $[n/3,2n/3]$.
Dividing by $n/4$, we find an $r\in [n/3,2n/3]$ such that
\begin{align}\label{eq:thin-shell-choice}
 &|\supp(u)\cap(B_r\setminus B_{r-D})|
 +D|\supp(u)\cap(B_r\setminus B_{r-2})|
 \le12Dm.
\end{align}
Put $E=\supp(u)\cap B_r$. Corollary~\ref{crc:gram} and~\eqref{eq:thin-shell-choice} give $H_E(D-1)\le 12 Dm$. Lemma~\ref{lem:carrier} therefore places $E$ on a reduced curve of degree
\begin{equation}\label{eq:degree-bound}
    \delta \le 160m<160c_dn.
\end{equation}
The second term in \eqref{eq:thin-shell-choice} also gives
\begin{equation*}
    |\supp(u)\cap(B_r\setminus B_{r-2})|\le12m.
\end{equation*}
Since $r\ge n/3$ and $c_d=10^{-10}/d$, the degree bound~\eqref{eq:degree-bound} is smaller than $r/(8d)$ and $12m<r/4$. This contradicts Proposition~\ref{prop:nocurve}. Thus $|\supp(u)\cap Q_n^{(d)}|\ge |\supp(u)\cap B_n|\ge c_dn^2$.
\end{proof}

\section{Proof of Theorem \ref{thm-zariski}}\label{sec:zariski}
In this section, we prove Zariski-dimension lower bounds in Theorem \ref{thm-zariski}. See Subsection~\ref{subsec:sharp-zariski} for the sharpness.

For an eigenfunction $A_du=\lambda u$, we retain the notation
$P_\lambda=A_d-\lambda I$ from Section~\ref{sec:proof-any-d}. We also use the displacement
sets $\mathcal D_i$, Lemma~\ref{lem:ports}, and
Proposition~\ref{prop:port-cycle} from the preceding section.

\begin{proof}[Proof of Theorem \ref{thm-zariski}: easy case]
Throughout the whole proof, the topology we consider is Zariski topology.
    Let $S=\supp (u)$ and let 
    \begin{equation*}
        X:=\overline{S}^{\operatorname{Zar}}\subset \mathbb{A}_\C^d.
    \end{equation*}
    Let $k:=\dim X$ and $\mathcal{C}$ be the finite family of all irreducible components of $X$. Let $\mathcal{C}_k$ be the family of all irreducible components having the same dimension with $X$. 
    
    First, we verify that $\mathcal{C}_k$ satisfies the hypothesis of Proposition~\ref{prop:port-cycle}. Fix $V\in \mathcal{C}_k$ and set
    \begin{equation*}
        V^\circ:=V\setminus \bigcup_{V'\neq V,
        V'\in \mathcal{C}}V'.
    \end{equation*}
    Thus $V^\circ$ is a nonempty open subset of $X$ and dense in $V$. Since $S$ is dense in $X$, the set $S\cap V^\circ$ is dense in $V$. 

    Fix coordinate index $i$. For any $p\in S\cap V^\circ$, lemma~\ref{lem:ports} gives an $h_p\in \mathcal{D}_i$ such that $p+h_p\in S$. Let $V_p \in \mathcal{C}$ be an irreducible component $V_p$ of $X$ containing $p+h_p$. Then we have
    \begin{equation*}
        p\in V\cap (V_p-h_p).
    \end{equation*}
    Since $\mathcal{C}$ and $\mathcal{D}_i$ are finite, there are only finitely many possible pairs $(V',h)$. Then write
    \begin{equation*}
        S\cap V^\circ \subset \bigcup_{V'\in \mathcal{C}}\bigcup_{h\in \mathcal{D}_i}(V\cap (V'-h))\subset V.
    \end{equation*}
    The middle term is again a finite union of closed subsets of $V$ and contains a dense subset of $V$, so we obtain
    \begin{equation*}
         V = \bigcup_{V'\in \mathcal{C}}\bigcup_{h\in \mathcal{D}_i}(V\cap (V'-h)).
    \end{equation*}
    Hence there must be some irreducible component $V'$ and some $h\in \mathcal{D}_i$ such that $V\subset V'-h$
    Both varieties are irreducible, thus
    \begin{equation*}
        V'=V+h \quad \text{and}\quad V'\in \mathcal{C}_k.
    \end{equation*}    
    Since $V$ and $i$ are arbitrary, the hypothesis of Proposition~\ref{prop:port-cycle} holds for $\mathcal{C}_k$.

    Fix $V\in \mathcal{C}_k$. Apply Proposition \ref{prop:port-cycle} and let $W$ and $M$ be the invariant subspace and normalized matrix. Then we have
    \begin{equation*}
        V+w=V,\quad \forall w\in W
    \end{equation*}
    and
    \begin{equation*}
        k\ge \dim W=\operatorname{rank}M\ge \left\lceil \frac{d}{2}\right\rceil.
    \end{equation*}
    This gives the proof of \eqref{eq:zariski-general}. Moreover, if $d$ is odd, then 
    \begin{equation*}
        \left\lceil \frac{d}{2}\right\rceil=\left\lfloor \frac{d}{2}\right\rfloor+1.
    \end{equation*}
    Hence \eqref{eq:zariski-nonzero} is also proved for odd dimension case.
\end{proof}

Now the only remaining part is the even dimension case for $\lambda\neq 0$. 
We introduce the following lemma that will be used in the proof of additional rigidity in even dimensional case.
\begin{lemma}\label{lem:paired-layer-rigidity}
    Let $q\ge 1$ and $\lbrace a_j,b_j\rbrace_{j=1}^q$ be a partition of $\lbrace 1,2,\cdots,2q\rbrace$. Set
    \begin{equation*}
        W:=\operatorname{span}_\C\lbrace e_{a_j}+\epsilon_j e_{b_j}:1\le j\le q\rbrace 
    \end{equation*}
    for some sign vector $(\epsilon_1,\cdots,\epsilon_q)\in \lbrace \pm 1\rbrace^q$. Suppose that $w:\Z^{2q}\to \C$ is supported on a finite union of affine planes parallel to $W$ satisfying
    \begin{equation*}
        \dim_{\C}\overline{\supp(w)}^{\,\mathrm{Zar}}=q,\quad
 \dim_{\C}\overline{\supp((A_{2q}-\lambda)w)}^{\,\mathrm{Zar}}<q.
    \end{equation*}
    Then $\lambda=0$.
\end{lemma}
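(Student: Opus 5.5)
The plan is to encode $w$ layer by layer as a Laurent polynomial in the directions transverse to $W$, and to reduce the hypothesis on $(A_{2q}-\lambda)w$ to an annihilation relation in a polynomial module. A McCoy-type argument then produces a nonzero element killed by every coefficient of $A_{2q}-\lambda$, in particular by the constant coefficient $-\lambda$.

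\emph{Coordinates.} For each pair $j$ introduce lattice coordinates
\[
y_j=x_{a_j}-\epsilon_jx_{b_j},\qquad s_j=x_{b_j}.
\]
This is a unimodular change of variables of $\Z^{2q}$. The affine planes parallel to $W$ are exactly the level sets $\{y=\text{const}\}$, and along each such plane $s\in\Z^q$ is a lattice parametrization. In the new variables the shifts become
\[
T_{a_j}=Y_j,\qquad T_{b_j}=Y_j^{-\epsilon_j}S_j,
\]
where $Y_j$ and $S_j$ denote the unit shifts in $y_j$ and $s_j$. Hence
\[
A_{2q}-\lambda=-\lambda+\sum_{j=1}^q\bigl(Y_j+Y_j^{-1}+Y_j^{-\epsilon_j}S_j+Y_j^{\epsilon_j}S_j^{-1}\bigr)
\]
is a Laurent polynomial in $Y$ with coefficients in $R:=\C[S^{\pm1}]$. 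Its $Y^0$-coefficient is exactly $-\lambda$, because every other term carries $Y_j^{\pm1}$.

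\emph{Quotient module.} Let $\mathcal F$ be the space of functions $\Z^q\to\C$, viewed as an $R$-module under shifts in $s$. Let $\mathcal L\subset\mathcal F$ be the subspace of functions whose support has Zariski closure of dimension $<q$ in $\C^q$. Translation preserves Zariski dimension and the union of two sets of dimension $<q$ has dimension $<q$, so $\mathcal L$ is an $R$-submodule. Set $N=\mathcal F/\mathcal L$.

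\emph{Encoding $w$.} Since $w$ lives on finitely many planes parallel to $W$, write
\[
w=\sum_{y\in Y_0}Y^y f_y,\qquad f_y\in\mathcal F,
\]
with $Y_0$ finite. Let $F:=\sum_y Y^y[f_y]\in N[Y^{\pm1}]$. Because $\dim\overline{\supp w}=q$, some layer $f_y$ has support of Zariski dimension $q$ in its plane, so $F\neq0$. For the image $G:=(A_{2q}-\lambda)F$, note that its $Y^y$-coefficient is the restriction of $(A_{2q}-\lambda)w$ to the layer $y$. That support has dimension $<q$ by hypothesis, so $G=0$ in $N[Y^{\pm1}]$.

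\emph{McCoy step.} Set $g=A_{2q}-\lambda$. After multiplying $g$ and $F$ by monomials in $Y$ we may work in $N[Y]$ and $R[Y]$. Then apply McCoy's theorem in module form: if $gF=0$ with $F\ne0$, there is $0\neq n\in N$ with $c\,n=0$ for every coefficient $c$ of $g$. The proof is the usual one. Take a counterexample $F$ with the fewest nonzero coefficients. If some coefficient $c$ of $g$ satisfies $cF\neq0$, then, ordering $Y$-exponents by a monomial order, the leading coefficient $c_{\max}$ of $g$ must kill the leading coefficient of $F$. Then $c_{\max}F$ has fewer terms, and one descends through the coefficients of $g$ until reaching an $F$ killed by all of them. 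Any nonzero coefficient of that $F$ is the desired $n$. The shift by the monomial does not change which coefficient multiplies the $Y^0$ slot; the monomial only relabels exponents. Consequently $-\lambda\, n=0$ in $N$, which forces $\lambda=0$.

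\emph{Expected obstacle.} The main work is making the McCoy descent rigorous for a multivariable Laurent polynomial acting on a module rather than a ring. Here the minimal-support argument needs a monomial order compatible with multiplication by monomials. A secondary point to verify carefully is that the Zariski-dimension conditions on $w$ and $(A_{2q}-\lambda)w$ translate exactly into $F\neq0$ and $G=0$ in $N$. This requires that a finite union of layer supports has dimension $<q$ if and only if each layer does, which holds because the planes are finitely many.
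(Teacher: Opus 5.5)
Your proof is correct and follows the paper's argument: a layer-by-layer Laurent encoding with coefficients in the shift module modulo functions of lower-dimensional support, an annihilation relation whose constant coefficient is $-\lambda$, and McCoy's theorem. The paper invokes McCoy via the trivial extension $\mathcal R\ltimes(\mathscr S/\mathscr N)$, whereas you prove the module version directly by minimal-support descent; in that descent take $c$ to be the largest coefficient of $g$ with $cF\neq0$, not the leading coefficient of $g$ itself, since the latter may already annihilate $F$.
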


\begin{proof}
Set
\begin{equation*}
    t_j=x_{a_j},\qquad r_j=x_{b_j}-\epsilon_jx_{a_j}\qquad (1\le j\le q).
\end{equation*}
Thus the planes parallel to $W$ are the level sets of $r=(r_1,\cdots,r_q)$. For $r\in\Z^q$, write
\begin{equation*}
    w_r(t):=w(x(t,r)),\qquad x_{a_j}(t,r)=t_j,\quad x_{b_j}(t,r)=r_j+\epsilon_jt_j.
\end{equation*}
Only finitely many of the sequences $w_r$ are nonzero.

Let $\mathcal R=\C[T_1^{\pm1},\cdots,T_q^{\pm1}]$ act by shifts on $\mathscr S=\C^{\Z^q}$, where $(T_jf)(t)=f(t+e_j)$, and set
\begin{equation*}
    \mathscr N=\left\{f\in\mathscr S:\dim_{\C}\overline{\supp(f)}^{\,\mathrm{Zar}}<q\right\}.
\end{equation*}
The space $\mathscr N$ is an $\mathcal R$-submodule. Hence the finite Laurent polynomial
\begin{equation*}
    U(z):=\sum_{r\in\Z^q}[w_r]z^r
    \in(\mathscr S/\mathscr N)[z_1^{\pm1},\cdots,z_q^{\pm1}]
\end{equation*}
is nonzero. Indeed if not, every layer of $\supp(w)$ would have Zariski dimension less than $q$, and the finite-layer hypothesis would give the same conclusion for $\supp(w)$.

Set $g=(A_{2q}-\lambda)w$ and define $g_r$ similarly. If $Z=\overline{\supp(g)}^{\,\mathrm{Zar}}$, then $\supp(g_r)$ is contained in the inverse image of $Z$ under the affine embedding $t\mapsto x(t,r)$. Thus $g_r\in\mathscr N$ for every $r$. On the layer-generating function, the shifts by $\pm e_{a_j}$ contribute $T_jz_j^{\epsilon_j}+T_j^{-1}z_j^{-\epsilon_j}$, while those by $\pm e_{b_j}$ contribute $z_j+z_j^{-1}$. Consequently,
\begin{equation}\label{eq:paired-layer-module}
    P(z)U(z)=0,\qquad
    P(z):=\sum_{j=1}^q\left((1+T_j^{\epsilon_j})z_j+(1+T_j^{-\epsilon_j})z_j^{-1}\right)-\lambda.
\end{equation}
If $\lambda\ne0$, the constant coefficient $-\lambda$ is a unit of $\mathcal R$, so the coefficients of $P$ generate the unit ideal. McCoy's annihilator theorem \cite{McCoy1957}, applied after clearing Laurent powers to the trivial extension $\mathcal R\ltimes(\mathscr S/\mathscr N)$, shows that multiplication by $P$ is injective. This contradicts \eqref{eq:paired-layer-module} and $U\ne0$. Hence $\lambda=0$.
\end{proof}

\medskip

\begin{proof}[Proof of Theorem~\ref{thm-zariski}: hard case]
    Let $d=2q$ for some positive integer $q$. We prove it by contradition. Assume that $k\le \lfloor d/2 \rfloor=q$, then it forces equality in \eqref{eq:port-cycle-dimension} and \eqref{eq:port-cycle-matrix-bounds}. Thus
    \begin{equation*}
        \dim W=\operatorname{rank}M=q,\quad \|M\|^2=4q,\quad \operatorname{tr}M=\sum_{\nu=1}^{2q} \sigma_\nu(M)=2q,
    \end{equation*}
    where $\sigma_1(M),\cdots,\sigma_{2q}(M)$ are the singular values of $M$. The construction in Proposition \ref{prop:port-cycle} also gives
    \begin{equation*}
        M_{ii}=1 \quad  \text{and}\quad \sum_{j\neq i}|M_{ij}|\le 1
    \end{equation*}
    for each $i$. Then
    \begin{equation*}
        \sum_{j}|M_{ij}|^2=1+\sum_{j\neq i}|M_{ij}|^2 \le 1+\left( \sum_{j\neq i}|M_{ij}|\right)^2\le 2.
    \end{equation*}
    The sum of the left-hand sides over all rows is $\|M\|^2=4q=2d$. Hence the equality holds for every $i$ and 
    \begin{equation*}
        \sum_{j\neq i}|M_{ij}|=\sum_{j\neq i}|M_{ij}|^2=1.
    \end{equation*}
    These identities force exactly one off-diagonal entry in each row to be nonzero with absolute value one. Thus there exist a map
    \begin{equation*}
        \pi: \lbrace 1,\cdots, 2q\rbrace\to \lbrace 1,\cdots,2q\rbrace
    \end{equation*}
    and signs $\epsilon_i\in \lbrace \pm 1\rbrace$ such that
    \begin{equation*}
        \operatorname{row}_i(M)= e_i+\epsilon_i e_{\pi(i)} \quad \text{with}\quad \pi(i)\neq i. 
    \end{equation*}
    By the singular value decomposition, the matrix $M$ can be written as $M=U\Sigma Q^T$ where $U$ and $Q$ are orthogonal, $\Sigma=\operatorname{diag}\lbrace \sigma_1(M),\cdots,\sigma_{2q}(M) \rbrace$.  Then
    \begin{equation*}
        2q=\operatorname{tr}M=\sum_{\nu=1}^{2q}\sum_{\eta=1}^{2q} U_{\nu,\eta} \sigma_\eta(M) Q^T_{\eta,\nu}= \sum_{\sigma_\eta(M)>0} \sigma_\eta (M)\langle U_{\cdot,\eta},Q_{\cdot,\eta}\rangle\le \sum_{\sigma_\eta(M)>0}\sigma_\eta(M) =2q
    \end{equation*}
    where we used the fact that $U_{\cdot,\eta}$ and $Q_{\cdot,\eta}$ are unit vectors. The equality forces
    \begin{equation*}
        \langle U_{\cdot,\eta},Q_{\cdot,\eta}\rangle=1
    \end{equation*}
    for all $\eta$ satisfying $\sigma_\eta(M)>0$. Therefore 
    \begin{equation*}
        M_{ij}=\sum_{\sigma_\nu(M)>0}U_{i,\nu}\sigma_\nu (M) U_{j,\nu}= M_{j,i},
    \end{equation*}
    that is, $M$ is symmetric and positive semidefinite.

    By symmetry of $M$, we obtain
    \begin{equation*}
        \pi(\pi(i))=i \quad \text{and}\quad \epsilon_{\pi(i)}=\epsilon_i.
    \end{equation*}
    Relabel the coordinates so that its pairs become $\lbrace a_j,b_j\rbrace$, $1\le j\le q$. Set $\epsilon_j:=\epsilon_{a_j}=\epsilon_{b_j}$. Since all rows of $M$ generate $W$, we obtain
    \begin{equation}\label{eq:paired-space}
        W=\operatorname{span}\lbrace e_{a_j}+\epsilon_je_{b_j}:1\le j\le q\rbrace.
    \end{equation}
    Fix $x_0\in V$. Since $x_0+W\subset V$ and both irreducible varieties have dimension $q$, $V$ must be an affine plane $x_0+W$. For each $V\in\mathcal{C}_k$, we obtain the same result. Thus $\mathcal{C}_k$ is the family of $q$-dimensional affine plane.

    Let $W$ be as in \eqref{eq:paired-space}. Let $\mathcal{C}_W\subset \mathcal{C}_k$ be the nonempty subfamily of components parallel to $W$ and set
    \begin{equation*}
        Y=\bigcup_{L\in \mathcal{C}_W}L.
    \end{equation*}
    Define
    \begin{equation*}
        w(x)=\begin{cases}u(x),&x\in Y,\\0,&x\notin Y,\end{cases}
    \end{equation*}
    and $v:=u-w$.
    Then $P_\lambda w=-P_\lambda v$. Let $K_0:=\lbrace 0, \pm e_1,\cdots,\pm e_{2q}\rbrace$ and define the finite unions
    \begin{equation*}
        \mathscr{A}=\bigcup_{\substack{L\in\mathcal C_W\\h\in K_0}}(L+h),
 \quad
 \mathscr{B}=
 \bigcup_{\substack{Z\in \mathcal{C},\ Z\notin\mathcal C_W\\h\in K_0}}(Z+h).
    \end{equation*}
    Then we have
    \begin{equation*}
        \supp(P_\lambda w)\subseteq\mathscr A\quad \text{and}\quad\supp(P_\lambda v)\subseteq\mathscr B.
    \end{equation*}
    Since $P_\lambda w=-P_\lambda v$, their common support is contained in $\mathscr{A}\cap \mathscr{B}$, which is a finite union of intersections $(L+h)\cap (Z+h')$ with $L\in \mathcal{C}_W$ and $Z\notin \mathcal{C}_W$. If $\dim Z=q$, then $Z$ must be an affine plane with direction different from $W$, therefore the intersection has dimension less than $q$. If $\dim Z<q$, the same conclusion is immediate. Thus
    \begin{equation}\label{eq:small-error}
        \dim_{\C}\overline{\supp(P_\lambda w)}^{\,\mathrm{Zar}}<q.
    \end{equation}
    On the one hand, the support of $w$ lies in the finite union $Y$ of affine planes parallel to $W$. On the other hand, $S\cap V^\circ\subset \supp (w) $ is dense in $V$, hence
    \begin{equation*}
        \dim_{\C}\overline{\supp(w)}^{\,\mathrm{Zar}}=q.
    \end{equation*}
    Together with \eqref{eq:small-error}, Lemma~\ref{lem:paired-layer-rigidity} gives $\lambda=0$, which gives a contradiction. Thus $k>q$ and therefore we finish the proof.
\end{proof}

\end{document}